\newtheorem{theorem}{Theorem}[section]
\newtheorem{lemma}[theorem]{Lemma}
\newtheorem{proposition}[theorem]{Proposition}
\newtheorem{corollary}[theorem]{Corollary}
\newtheorem{definition}[theorem]{Definition}
\newtheorem{remark}[theorem]{Remark}    
\newtheorem*{claim}{Claim}
\numberwithin{equation}{section}
\def\command@factory#1{%
\expandafter\def\csname b#1\endcsname{\mathbf{#1}}
\expandafter\def\csname fk#1\endcsname{\mathfrak{#1}}
\expandafter\def\csname bb#1\endcsname{\mathbb{#1}}
\expandafter\def\csname cl#1\endcsname{\mathcal{#1}}
\expandafter\def\csname bcl#1\endcsname{\mathbfcal{#1}}
}
\def\pounds{\mbox{\textsterling}}
\newcommand{\rmd}{\textnormal{d}}
\title{\textbf{Solution properties of the incompressible Euler system with rough path advection }}
\author{
Dan Crisan\thanks{\footnotesize Department of Mathematics, Imperial College London, SW7 2AZ, UK}\thanks{Emails: \href{mailto:d.crisan@imperial.ac.uk}{d.crisan@imperial.ac.uk},  \href{mailto:d.holm@imperial.ac.uk }{d.holm@imperial.ac.uk}, \href{mailto:j.leahy@imperial.ac.uk}{j.leahy@imperial.ac.uk}}
\and
Darryl D. Holm\footnotemark[1]\footnotemark[2]
\and
James-Michael Leahy\footnotemark[1]\footnotemark[2]\thanks{\footnotesize Department of Applied Mathematics, University of Twente, 7522 NB Enschede
}
\and 
Torstein Nilssen\thanks{\footnotesize Institute of Mathematics, University of Agder, 4604 Kristiansand, NO. Email: \href{mailto:torstein.nilssen@uia.no}{torstein.nilssen@uia.no}}
}
\date{}
\begin{document}

\maketitle

\begin{abstract}
The present paper aims to establish the local well-posedness of Euler's fluid equations on geometric rough paths. In particular, 
we consider the Euler equations for the incompressible flow of an ideal fluid whose Lagrangian transport velocity possesses an additional rough-in-time, divergence-free vector field. In recent work, we have demonstrated that this system can be derived from Clebsch and Hamilton-Pontryagin variational principles that possess a perturbative geometric rough path Lie-advection constraint. In this paper, we prove the local well-posedness of the system in $L^2$-Sobolev spaces $H^m$ with integer regularity $m\ge \lfloor d/2\rfloor+2$ and establish a Beale-Kato-Majda (BKM) blow-up criterion in terms of the $L^1_tL^\infty_x$-norm of the vorticity. In dimension two, we show that the $L^p$-norms of the vorticity are conserved, which yields global well-posedness and a Wong-Zakai approximation theorem for the stochastic version of the equation.
\end{abstract}

\renewcommand{\baselinestretch}{0.9}\normalsize
\setcounter{tocdepth}{3}
\tableofcontents
\renewcommand{\baselinestretch}{1.0}\normalsize

\section{Introduction}

Mathematical models of the interaction dynamics of disparate space and time scales in fluid flows remains one of the most active research areas, emboldened not just by theoretical considerations but also by practical applications such as the need to understand the dynamics of Earth's oceans and atmosphere in the context of global climate change. Incorporating fine-scale perturbations into the fluid motion equations and then analysing their effects on the coarse-scale flow has become one of the primary objectives of mainstream fluid models, particularly in the last two decades. Approaches toward this objective include deterministic perturbations \cite{smagorinsky1963general, germano1991dynamic, mason1994large,piomelli1999large, xie2020modeling} and stochastic perturbations \cite{piomelli1991subgrid, mason1992stochastic, schumann1995stochastic, piomelli1996large, pope2001turbulent, mikulevicius2001equations, shutts2005kinetic, berselli2009stochastic,majda2001mathematical, imkeller2012stochastic, memin2014fluid, holm2015variational, berner2017stochastic, demaeyer2018stochastic, skamarock2019description, street2021semi}. Such perturbations can be exogenously introduced into the fluid model to account for (possibly unknown) external forces. They can also be introduced endogenously, for example, to model the effects of unresolved fast sub-grid scale physics or other uncertain processes. In geophysical fluid dynamics, this trend has led to many numerical developments, including the introduction of parameterization schemes used to represent model uncertainties in the interaction of disparate space and time scales to improve the probabilistic skill of the ensemble weather forecasts \cite{shutts2005kinetic,berner2017stochastic, skamarock2019description, cotter2019numerically, cotter2020data, resseguier2020data, resseguier2020new, cotter2020particle}. 

Many fluid equations can be characterized as critical points of action functionals \cite{arnold1998differential, holm1998euler} that incorporate fluid physics via a Lagrangian whose kinetic energy is defined in terms of velocity vector fields which are right-invariant under the diffeomorphisms and whose potential energy is defined in terms of advected quantities which evolve under pushforward by the flows generated by these vector fields. Perhaps the most well-known fluid model that arises in this manner is the perfect incompressible Euler system \cite{arnold1998differential}, which describes geodesic flow on the manifold of diffeomorphisms endowed with the $H^s$-topology, $s>\frac{d}{2}+1$, with respect to the weak $L^2$-metric defined by the fluid kinetic energy, \cite{ebin1970groups}. 

A natural framework for introducing parameterization schemes into fluid models that arise from variational principles is through the introduction of a parameterized perturbation at the level of the Lagrangian in the action functional. Critical paths of the parameterized action functional then satisfy modified fluid motion equations that preserve the fundamental properties of the unperturbed model inherited from the variational principle. In particular, variational parameterization schemes for fluid models possess a Kelvin-Noether theorem which governs their circulation dynamics, as well as any other conservation laws arising from unbroken Lie symmetries of the original model \cite{holm1998euler}. The stochastic setting for this variational approach was introduced in \cite{holm2015variational}, and many further developments of it have been made subsequently, \cite{gay2018stochastic, de2020implications, drivas2020lagrangian, street2021semi,cotter2019numerically}. In \cite{crisan2022variational}, the present authors extended \cite{holm2015variational} to obtain a class of variational principles for fluid dynamics on geometric rough paths \cite{MR2314753, FrVi10, friz2020course}. This extension to geometric rough paths was achieved by constraining the advective transport equation to incorporate a temporally rough vector field. Critical points of the corresponding action functionals satisfy a system of rough partial differential equations (RPDEs) whose dynamics incorporate both the resolved-scale fluid velocity and the modeled effects of the unresolved fluctuations. The paper \cite{crisan2022variational} provides a bridge between fluid dynamics and rough path theory. It draws upon knowledge from both areas, and we hope that it will impact both areas.  

In this work, we consider Euler's equations for perfect incompressible fluid flow on geometric rough paths. It was shown in \cite{crisan2022variational}[Section 4.1 and 4.2] that this system is a critical path of the Clebsch or Hamilton-Pontryagin action functionals and satisfies a Kelvin circulation balance law (see, also, Section \ref{sec:main_res_state_critical}).  On the $d$-dimensional torus $\bbT^d$, the rough Euler system is given by
\begin{equation*}
\begin{cases}
\rmd u +u\cdot \nabla u\,\rmd t -\pounds_{\xi_k}^*u\,\rmd \bZ^k_t = -\rmd\nabla  q_t - \rmd h_t \quad \textnormal{on}\;\; (0,T]\times \bbT^d ,\\
\operatorname{div}  u=0 \quad \textnormal{on}\;\; [0,T]\times \bbT^d ,\\
\int_{\bbT^d}u\, dV=0,\quad  \int_{\bbT^d}q\,dV=0 \quad \textnormal{on} \;\;[0,T],\\
u=u_0,\quad q=0,\quad  h=0 \quad \textnormal{on} \;\;\{0\}\times \bbT^d ,
\end{cases}
\end{equation*}
where $\bZ=(Z,\mathbb{Z})\in \mathcal{C}^{p-\textnormal{var}}_{g}(\bbR_+;\bbR^K)$ is an $\bbR^K$-valued geometric rough path with variation  $p\in [2,3)$,  $\xi: \bbT^d\rightarrow \{\bbR^d\}^K$ is a collection of sufficiently smooth divergence-free vector fields, and 
$
\pounds_{\xi_k}^*u=-(\xi_k^j\partial_{x^j}u^i+u^j\partial_{x^i}\xi_k^j)e_i.
$
Equation \eqref{eq:rough_velocity_unprojected} is to be solved for an unknown divergence and mean-free vector field (velocity) $u: [0,T]\times \bbT^d\rightarrow \bbR^d$, mean-free scalar field (`time-integrated' pressure) $q:[0,T] \times \bbT^d\rightarrow \bbR$, and harmonic constant (time-integrated) $h: [0,T]\rightarrow \bbR^d$.  The pressure $q$ and harmonic constant $h$ should be understood as Lagrange multipliers associated with the divergence-free and mean-free constraints, respectively.  The present paper aims to establish the local well-posedness of equations \eqref{eq:rough_velocity_unprojected} in $L^2$-Sobolev spaces $H^m$ with integer regularity $m\ge \lfloor d/2\rfloor+2$. 

The $p$-variation of the rough path provides a measure of roughness: the higher the value of $p$, the rougher the path. See Definition \ref{def:rough_path} for details. The analysis of the Euler equation perturbed by a bounded variation path (i.e., $p=1$) follows the same steps as that of the (unperturbed) Euler equation (see, e.g., \cite{majda2002vorticity}[Chapter 3] and \cite{bahouri2011fourier}[Chapter 7]). We refer to \cite{majda2002vorticity}[Notes for Chapter 3] and \cite{bahouri2011fourier}[References and remarks for Chapter 7] for a historical account of the solution theory for the deterministic incompressible Euler equations.

For $p\ge 2$,  the classical integration methodology is no longer applicable and we enter the realm of rough path theory (see e.g., \cite{friz2020course}). In this paper, we treat rough path perturbations with $p\in [2,3)$, in the first non-trivial regime. A similar treatment is possible for paths with variation $p\in (1,2)$ (i.e., the Young integration case) and  $p\ge 3$. 

The case $p=2$  includes the incompressible stochastic Euler system driven by Brownian motion. See, for example,  \cite{mikulevicius2000stochastic, glatt2014local, brzezniak2016existence, crisan2019solution, breit2018stochastically, lang2022well} for a non-exhaustive selection of papers covering various types of incompressible stochastic Euler systems. Having eliminated the need for stochastic integration, the RPDEs retain a pathwise interpretation.  Unlike in the stochastic setting, though, no Burkholder-Davis-Gundy inequality is available to estimate the rough integral. Consequently, we will apply the method of unbounded rough drivers \cite{BaGu15} (explained below). This method will enable us to establish a Wong-Zakai approximation for the solution. In addition, we will give an interpretation of the solution as a bona fide random dynamical system \cite{bailleul2017random}. It is worth mentioning that it is possible to prove Wong-Zakai approximation results for SPDEs driven by Brownian motion without the use of the rough paths (see, e.g., \cite{gyongy2013rate, brzezniak2019wong}). 

Our work also includes a solution theory for fractional Brownian motion driven equations,  which enables memory effects to be introduced through our formulation. In our previous work \cite{crisan2022variational}, we explained how fluid models on geometric rough paths can be used in the context of stochastic parameterization schemes and uncertainty quantification (see, e.g., \cite{cotter2019numerically, cotter2020data, cotter2020particle}. Our results also set the stage for investigating numerical schemes and developing geometric rough path parameterization schemes for fluid models to account for additional  properties such as unknown Lagrangian trajectory roughness and system memory.

An intrinsic theory of transport-type RPDEs was developed in the papers \cite{BaGu15, DiFrSt14}.  In \cite{BaGu15}, the authors use a priori estimates based on Davies' type expansions \cite{davie2008differential}, doubling of variables, and commutator estimates in the spirit of DiPerna-Lions to establish well-posedness and the analog of the renormalization property. In contrast, the authors of \cite{DiFrSt14} use a generalized Feynman-Kac formula and forward-backward duality type arguments. The method of unbounded rough drivers was extended in \cite{DeGuHoTi16} to RPDEs with non-linear drift terms, which influenced a series of papers \cite{hocquet2018energy, HN, hofmanova2019navier, hofmanova2019rough, HOCQUET20202159,gerasimovivcs2021non,gussetti2021pathwise,hocquet2020quasilinear,coghi2021rough, flandoli2020global}.  The most relevant to our current study are the papers  \cite{hofmanova2019navier, hofmanova2019rough, flandoli2020global} on the rough Navier-Stokes equations. 

In \cite{hofmanova2019navier}, a rough Navier-Stokes equation with a pure linear rough transport perturbation $\xi_k\cdot \nabla u \rmd \bZ_t^k$ was considered, which at the formal level, conserves energy. At the present moment, the only method the authors know to obtain estimates (after passing to the limit) and the uniqueness of solutions of rough transport type PDEs in the highest spatial norm for which one has a uniform bound in time is through the method of doubling variables and commutator estimates. Unfortunately, the incompressibility constraint present in the velocity formulation makes it challenging to obtain solution estimates using this method. In \cite{hofmanova2019navier}, the existence of ``weak'' solutions (i.e., $u\in L^2([0,T]; H^1)\cap L^\infty([0,T];L^2)$ was proved in 2d and 3d by establishing an energy equality along a smooth-noise approximating sequence, and thereby deriving an energy inequality for the solution by passing to the limit. In dimension two, uniqueness, continuity in $L^2$, and an energy equality were only proved in the special case of constant vector fields $\xi$ because of the difficulty that arises from the incompressibility constraint. It is worth noting that for constant $\xi$, scalar linear transport agrees with Lie transport.

In \cite{hofmanova2019rough}, a Navier-Stokes equation with Lie-advection in the noise, as in the present paper, was considered. In contrast with \cite{hofmanova2019navier}, the vorticity formulation of the Lie-advected equation does not include projections, which facilitated the use of the doubling of variables technique for ``strong'' solutions (i.e., $u\in L^2([0,T];H^2)\cap L^\infty([0, T]; H^1)$ (see Remark \ref{rem:vorticity_no_projection}). In \cite{hofmanova2019rough}, the local existence of strong solutions was proved in dimension three, and  global existence and uniqueness was proved for strong solutions in dimension two. Additional regularity was not investigated, and a rough version of an Aubin-Lions type compactness result was used in the proof of existence.

The present paper expands the scope of the theory of unbounded rough drivers.  The main tool from this theory is Theorem \ref{thm:remainder_est}, which extends the usual Davies' remainder estimates for RDEs to RPDEs with linear rough transport structure.   As in \cite{hofmanova2019rough},  we make use the vorticity formulation to obtain solution estimates. In the absence of the smoothing effect of viscosity (c.f., \cite{hofmanova2019rough}), we need to obtain  bounds on the higher-order derivatives of the solution.   Specifically, in Section \ref{sec:apriori}, we derive a system of equations for the vorticity and its derivatives  up to order $m$ with $m\ge \lfloor d/2\rfloor + 2$, which has a linear rough symmetric transport structure. We then derive an equation for the square of this system and apply Theorem \ref{thm:remainder_est}. Next, we obtain solution estimates by applying a rough version of Gr\"onwall's lemma (see Section \ref{sec:rough_gron}). The additional  regularity of the solutions allows us to avoid using  doubling-of-variables  (used in e.g., \cite{BaGu15, DeGuHoTi16,hofmanova2019rough}) at the expense of having to assume $\xi$ is slightly more regular whenever continuity in time in the highest norm is needed. This simplifies many of the techniques needed for obtaining a priori estimates (see Remark \ref{rem:double_variable}). As a result of the additional regularity, we use Arzela-Ascoli to obtain compactness, rather than Aubin-Lions as in \cite{hofmanova2019navier, hofmanova2019rough}.  In Section \ref{sec:maximally extended solution}, we show how to construct a maximally extended solution in the rough path setting, which  as far as we are aware is a new result. Further extensions of this work to bounded domains with boundary conditions is the subject of future research. However, the extension to bounded domains will encounter technical difficulties involving, for example, the method of doubling of variables, or applying the equation for the square of the vorticity when boundary conditions are present (provided additional regularity is known).

\textbf{Description of results.} The  goal of the present paper is to  establish solution properties of the rough  incompressible Euler system given in equation \eqref{eq:rough_velocity_unprojected}. Theorem \ref{thm:local_wp} states that for an initial condition of Sobolev regularity $H^m$ with $m\ge \lfloor \frac{d}{2}\rfloor + 2$ there exists a unique $H^m$ solution of the $d$-dimensional rough Euler system on the interval $[0, T_*],$ where the time $T_*>0$ depends on the initial condition of the equation, the driving rough path, and the coefficients of the rough driver. This solution can be extended to a unique maximal solution (Corollary 
\ref{cor:maximal_solution}). Theorem \ref{thm:BKM} gives a blow-up criterion in terms of the $L^1_tL^\infty_x$-norm of the vorticity, which extends the well-known Beale-Kato-Majda criterion \cite{beale1984remarks}. In dimension two, we show that the $L^p$-norms of the vorticity are conserved for all $p\in [2,\infty]$ and thereby establish global well-posedness (see Theorem \ref{thm:global2d}). Corollary \ref{cor:stability} states that the solution, the pressure term, and harmonic constant are jointly continuous as functions of the initial condition, the driving rough path, and the coefficients of the rough driver. In Section \ref{sec:main_res_state_wong_zakai}, we discuss applications of our results to stochastic partial differential equations (Sades), including Theorem \ref{thm:Wong-Zakai}, which gives a Wong-Zakai approximation result for the corresponding Stratonovich driven SPDE. In Section \ref{sec:main_res_state_critical}, we explain how our main results yield critical points of the Clebsch and Hamilton-Pontryagin action functionals introduced in \cite{crisan2022variational}. To do this, we derive a generalized Weber representation formula \eqref{eq:WeberFormula} of smooth (in space) solutions.

\vspace{1mm}

\textbf{Structure of the paper}: 
\begin{itemize}
\item Section \ref{sec:prelim} introduces the basic notation and summarizes the standard results that will be required throughout the paper.
\item In Section \ref{Main Results}, we formulate the rough incompressible Euler equation in velocity form and as well as vorticity form and state the main results of the paper: local well-posedness in any dimension, the Beale-Kato-Majda blow-up criteria, global well-posedness in 2d, and continuous dependence on data. We discuss various applications to stochastic equations, including a Wong-Zakai approximation result. Finally, we explain the correspondence of solutions with critical points of the action functionals presented in \cite{crisan2022variational}.
\item Section \ref{sec:apriori} contains a priori estimates for remainders, the solution, and differences of solutions.
\item Section \ref{sec:local_wp} contains the proof of local well-posedness.
\item Section \ref{sec:proof_remaining} contains the proof of the remaining results.
\item The appendices \ref{sec:URD} and \ref{sec:rough_gron} contain technical results that are used in establishing the a priori estimates.                                                  
\end{itemize}

\subsection*{Acknowledgments} 
All of the authors are grateful to our friends and colleagues who have generously offered their time, thoughts, and encouragement in the course of this work during the time of COVID-19. 
We are particularly grateful to T. D. Drivas  for thoughtful discussions, especially an interesting discussion regarding Section \ref{sec:main_res_state_critical}. We also thank the anonymous referee for the helpful comments and constructive remarks on this manuscript. DC and DH are grateful for partial support from ERC Synergy Grant 856408 - STUOD (Stochastic Transport in Upper Ocean Dynamics). JML is grateful for partial support from US AFOSR Grant FA9550-19-1-7043 - FDGRP (Fluid Dynamics of Geometric Rough Paths) awarded to DH as PI. TN is grateful for partial support from the DFG via the Research Unit FOR 2402. 

\section{Preliminaries}\label{sec:prelim}

\subsection{Basic notation, function spaces, and inequalities}\label{sec:prelim_func}

Let $d\in \{2,3,\ldots \}$ and $\bbT^d= \bbR^d/(2\pi\bbZ)^d\cong (S^1)^d$  denote the $d$-dimensional flat torus. The Riemannian covering map $\pi:\bbR^d\rightarrow \bbT^d$ induces global orthonormal frames $\{\partial_{x^i}\}_{i=1}^d$ and $\{dx^i\}_{i=1}^d$ of the tangent $T\bbT^d\cong \bbT^d\times \bbR^d$ and cotangent bundle $T^*\bbT^d\cong \bbT^d\times (\bbR^d)^*$, respectively, and a normalized  Haar measure  $dV=\textnormal{Vol}(\bbT^d)^{-1}dx^1\wedge \cdots \wedge dx^d$. 

Let $V$ denote an arbitrarily given finite dimensional real vector space with inner product $(\cdot,\cdot)_V$ and norm $|\cdot|_V$. Denote by $C^\infty(\bbT^d;V)$  the Fr\'echet space of smooth $V$-valued functions on $\bbT^d$. For given $m\in \bbN$, let  $C^m(\bbT^d;V)$ denote the Banach space of $m$-times continuously differentiable $V$-valued functions on $\bbT^d$. We will blur the distinction between $2\pi$-periodic functions and functions on $\bbT^d$.  Let $\clL(\bbR^d;V)$ denote the space of linear maps from vector space $\bbR^d$ to $V$ and $D : C^\infty(\bbT^d;V)\rightarrow C^{\infty}(\bbT^d;\clL(\bbR^d;V))$ denote the derivative operator. Let $\Delta: C^\infty(\bbT^d;V)\rightarrow C^{\infty}(\bbT^d;V)$ denote the Laplacian, which is defined by $\Delta f = \partial_{x^i}^2f$. Here and below we use the convention of summing repeated indices over their range of values. 

For given  $p\in [1,\infty]$, denote by  $L^p(\bbT^d;V)=L^p((0,2\pi)^d;V)$  the Banach space of  equivalence classes of $V$-valued of $L^p$-integrable functions on $\bbT^d$ with norm $|\cdot|_{L^p}$. We denote by $(\cdot,\cdot)_{L^2}$ the inner product on the Hilbert space $L^2(\bbT^d;V)$. Since it will always be clear from the context where a function takes its values, we drop the dependence of norms and inner products on $V$. It is well-known that the sequence $\{\psi_n\}_{n\in \bbZ^d}\subset C^\infty(\bbT^d;\bbC)$ defined by for all $n\in \bbZ^d$ and $x\in \bbT^d$ by $\psi_n(x)=e^{in\cdot x}$ forms an orthonormal basis of $L^2(\bbT^d;\bbC)$. 

Denote the Fourier transform $\clF: L^1(\bbT^d;V) \rightarrow \ell^\infty(\bbZ^d;V)$ by $\clF f(n)=\hat{f}(n)=\int_{\bbT^d}f\psi_n dV$ and its inverse $\clF^{-1}: l^\infty(\bbZ^d;V)\rightarrow L^1(\bbT^d;V)$ by $\clF^*\hat{f}=\sum_{n\in \bbZ^d}\hat{f}(n)\psi_n$. Let $S(\bbZ^d;V)=\{\{c_n\}_{n\in \bbZ^d}\subset V: \sup_{n\in \bbZ^d}(1+|n|)^N|c_n|<\infty, \;\forall N\in \bbN\}$ denote the Fr\'echet space of rapidly decreasing  multi-sequences. It follows that $\clF : C^\infty(\bbT^d;V)\rightarrow S(\bbZ^d;V)$, $\clF^{-1}: S(\bbZ^d;V)\rightarrow  C^\infty(\bbT^d;V)$, and moreover that $\clF$ extends to an isometric isomorphism $\clF: L^2(\bbT^d;V)\rightarrow \ell^2(\bbZ^d;V)$.

Denote by $\clD'(\bbT^d;V)=\{f: C^\infty(\bbT^d;\bbC)\rightarrow V; \;f\textnormal{ is linear and continuous}\}$  the Fr\'echet space of $V$-valued distributions. Equivalently, $\clD'(\bbT^d;V)$ may be characterized as $2\pi$-periodic distributions $\clD'(\bbR^d;V)$  or as distributional Fourier series. Indeed, the Fourier transform  extends via duality to an isomorphism $\clF: \clD'(\bbT^d;V)\rightarrow S'(\bbZ^d;V)$, where $S'(\bbZ^d;V)=\{\{c_n\}_{n\in \bbZ^d}\subset V: \exists N\in \bbN \textnormal{ s.t. } \sup_{n\in \bbZ^d}(1+|n|)^{-N}|c_n|<\infty\}$ denotes the space of slowly increasing  multi-sequences. 
Clearly, for all $p\in [1,\infty)$, $C^\infty(\bbT^d;V),L^p(\bbT^d;V)\subset \clD'(\bbT^d;V)$. The differential operators $\partial^{\alpha}$, $D$, and $\Delta$  extend to distributions via duality.

Denote by $\mathring{C}^\infty(\bbT^d;V), \mathring{L}^{p}(\bbT^d;V)$,  and $\mathring{\clD}'(\bbT^d;V)$ the corresponding subspace of distributions $f$ with $\hat{f}(0)=0$. If $f\in \clD'(\bbT^d;V)$  is such that $\Delta f = -\sum_{n\in \bbZ^d} |n|^2\hat{f}(n)\psi_n=0_V$ (i.e., harmonic), then $\hat{f}(n)=0$ for all $n\in \bbZ^d-\{0\}$, which implies $\operatorname{Ker}\Delta = V.$

For given $s\in \bbR$, define  $(I-\Delta)^{-\frac{s}{2}}: \clD'(\bbT^d;V)\rightarrow \clD'(\bbT^d;V)$ by  
$$(I-\Delta)^{-\frac{s}{2}}f = \sum_{n\in \bbZ^d}\langle n\rangle^{-s} \hat{f}(n)\psi_n\,,$$ where $\langle n\rangle = (1+|n|^2)^{\frac{1}{2}}.$ Let $s\in \bbR$ and $H^{s}(\bbT^d;V)=(I-\Delta)^{-\frac{s}{2}}L^2(\bbT^d;V)$ denote the Bessel potential spaces, which are Hilbert spaces with  inner products and norms given by
$$
(f,g)_{H^s}:= \left(\sum_{n\in \bbZ^d}\langle n\rangle^{s} (\hat{f}(n),\hat{g}(n))_V\right)^{\frac{1}{2}}\quad \textnormal{and} \quad
|f|_{H^{s,p}}:= \left|\sum_{n\in \bbZ^d}\langle n\rangle^{s} \hat{f}(n) \psi_n \right|_{L^p}.
$$
The duality pairing  $\langle \cdot,\cdot\rangle_{H^s}:H^{-s}(\bbT^d;V)\times  H^s(\bbT^d;V)\rightarrow \bbR$ given by
$$
\langle f, g \rangle_{H^s}=((I-\Delta)^{-\frac{s}{2}} f , (I-\Delta)^{\frac{s}{2}}g)_{L^2}
$$
induces an isomorphism $H^{-s}(\bbT^d;V)\cong H^{s}(\bbT^d;V)'$. It follows that for all $m\in \bbN_0$, $H^{m}(\bbT^d;V)=\{f\in L^2(\bbT^d;V): \sum_{0\le n\le m} |D^n f|_{L^2}^2<\infty\}$. For $m\in \bbN$, denote by $W^{m,\infty}(\bbT^d;V)$ the Banach space of functions $f\in L^\infty(\bbT^d;V)$ such that $|f|_{W^{m,\infty}}:=\max_{0\le |\alpha|\le m}|\partial^{\alpha} f|_{L^\infty}<\infty$.

For given $s\in \bbR$, we let $\mathring{H}^s(\bbT^d;V)=\{f\in H^s(\bbT^d;V): \hat{f}(0)=0_V\}$ and for $m\in \bbN_0$, let $\mathring{W}^{m,\infty}(\bbT^d;V)=\{f\in \mathring{W}^{m,\infty}(\bbT^d;V): \hat{f}(0)=\int_{\bbT^d}f dV=0_V\}.$

Henceforth, we use the notation $\lesssim_{c_1,\ldots, c_d}$ to denote less than equal to up to a constant $C=C(c_1,\ldots, c_d)$ depending only on parameters $c_1,\ldots, c_d$. Throughout the paper, we will make regular use of the following facts:
\begin{itemize}
\item  For given $N\in \bbN$, define $P_{\le N}: \clD'(\bbT^d;V)\rightarrow C^\infty(\bbT^d;V)$ by $P_{\le N}f =\sum_{|n|\le N}\hat{f}(n)\psi_n$. If $s_0<s_1$, then for all $f\in H^{s_0}(\bbT^d;V)$, $|P_{\le N}f|_{H^{s_1}}\le N^{s_1-s_0} |f|_{H^{s_0}}$ and for all $f\in H^{s_1}(\bbT^d;V)$, $|(I-P_{\le N})f|_{H^{s_0}}\le N^{s_0-s_1}|f|_{H^{s_1}}$.
\item  If $s>\frac{d}{2}+m$ for some $m\in \bbN_0$, then $C^{m}(\bbT^d;V)\subset H^{s}(\bbT^d;V)$.
\item (Poincar\'e) For all $f\in \mathring{H}^{s+1}(\bbT^d;V)$,  $|f|_{H^s}\lesssim_{d,s} |Df|_{H^s}$. For all $f\in \mathring{W}^{m+1,\infty}(\bbT^d;V)$, $|f|_{W^{m,\infty}}\lesssim_{d,m} |D f|_{W^{m,\infty}}$.
\item For all $m_1,m_2\in \bbN_0$ and $f,g\in L^\infty(\bbT^d;V)\cap H^{m_1+m_2}(\bbT;V),$
\begin{equation}\label{ineq:alg_sob_1}
||D^{m_1} f| |D^{m_2}g||_{L^2}\lesssim_{d,m_1,m_2}|f|_{L^\infty} |g|_{H^{m_1+m_2}}+|f|_{H^{m_1+m_2}}|g|_{L^\infty}.
\end{equation}
\item For all $m\in \bbN_0$, $f\in L^\infty(\bbT^d;V)\cap H^{m+1}(\bbT;V)$ and $g\in L^\infty(\bbT^d;V)\cap H^{m}(\bbT;V)$
\begin{equation}\label{ineq:alg_sob_2}
\begin{aligned}
\sum_{0\le |\alpha|\le m}|\partial^{\alpha}(f\nabla g)-f\partial^{\alpha}\nabla g|_{L^2}&\lesssim_{d,m}\left(|\nabla f|_{L^\infty}|g|_{H^m}+|f|_{H^{m+1}}|g|_{L^\infty} \right).
\end{aligned}
\end{equation}
\end{itemize}

Let $(E,|\cdot|_{E})$ be an arbitrarily given Banach space. For an interval $I\subset \bbR_+$, denote by $C(I;E)$ (resp.\ $C_w(I;E)$) the space of continuous (resp.\ weakly-continuous) $E$-valued functions. For $p\in [1,\infty]$, let  $L^p(I;E)$ the Banach space of equivalence classes of $E$-valued $L^p$-integrable strongly measurable $E$-valued functions on $I$.

\subsubsection{Hodge and Helmholtz decomposition and the Biot-Savart operator}\label{sec:prelim_Hodge}

Let $\{e_i\}_{i=1}^d$ denote the standard basis of $\bbR^d$ and let $\{e^i\}_{i=1}^d$ denote the dual basis. We identify vector fields $u=u^i\partial_{x^i}\in \Gamma(\bbT^d;T\bbT^d)$ with $\bbR^d$-valued maps $u=u^ie_i: \bbT^d\rightarrow \bbR^d$ and $k$-forms $\alpha=\sum_{i_1<\cdots <i_d}\alpha_{i_1\ldots i_k}dx^{i_1}\wedge \cdots \wedge dx^{i_k}\in \Gamma(\bbT^d;\Lambda^kT^*\bbT^d)$ with $\Lambda^k(\bbR^d)^*$-valued maps $\alpha=\sum_{i_1<\cdots <i_k}\alpha_{i_1\ldots i_k} e^{i_1}\wedge \cdots \wedge e^{i_k}: \bbT^d\rightarrow \Lambda^k(\bbR^d)^*$. Denote by $\flat$ the map that sends vector fields to one-forms given by $u^{\flat}=u_ie^i$, where $u_i=u^i$ since the metric is locally Euclidean. Let $\sharp$ denote the inverse of $\flat$. 

For a given diffeomorphism $\phi: \bbT^d\rightarrow \bbT^d$ and $\alpha\in \Gamma(\bbT^d;\Lambda^kT^*\bbT^d)$, denote the pullback and pushforward by
$$\phi^* \alpha=k!\underset{j_1<\cdots <j_k}{\sum_{i_1<\ldots <i_k}} \alpha_{j_1\ldots j_k}\circ \phi \partial_{x^{i_1}}\phi^{j_1}\cdots \partial_{x^{i_k}}\phi^{j_k}e^{i_1}\wedge \cdots \wedge e^{i_k}$$
and $\phi_*\alpha=\phi^{-1;*}\alpha$, respectively.

Denote by $\bd$, $\delta$, and $-\Delta=\bd \delta + \delta \bd $  the exterior derivative, co-differential (i.e., adjoint of $\bd$), and Hodge Laplacian operators, respectively. In particular, using the above identification, for $f\in \clD(\bbT^d;\bbR^d)$, $\alpha\in \clD'(\bbT^d;(\bbR^d)^*)$, $\omega\in \clD'(\bbT^d;\Lambda^2(\bbR^d)^*)$, and $\gamma\in \clD'(\bbT^d;\Lambda^3(\bbR^d)^*)$, 
\begin{gather*}
\bd f = \partial_{x^i}fe^i, \quad \bd \alpha  =\sum_{i<j} (\partial_{x^i} \alpha_j - \partial_{x^j} \alpha_i) e^i\wedge e^j,\\
\bd \omega  = \sum_{i<j<k}(\partial_{x^i} \omega_{jk}-\partial_{x^j}\omega_{ik}+\partial_{x^k}\omega_{ij}) e^i\wedge e^j\wedge e^k,\\
\delta f = 0, \quad  \delta \alpha = -\partial_{x^i}\alpha_i, \quad \delta \omega  = \partial_{x^j}\omega_{ij}e^i, \quad \delta \gamma = -\sum_{i<j}\partial_{x^k}\gamma_{ijl}e^i\wedge e^j,\\
-\Delta f=  -\partial_{x^j}^2f, \quad -\Delta \alpha =-\partial_{x^j}^2\alpha_ie^i, \quad 
-\Delta  \omega =-\partial_{x^k}^2 \omega_{ij} e^i\wedge e^j.
\end{gather*}
We recall that $\bd \phi_*=\phi_*\bd$, $\bd^2=0$, and $\delta^2=0$.

Let $s\in \bbR$ be arbitrarily given. For arbitrarily given  $\alpha\in H^s(\bbT^d;\Lambda^k(\bbR^d)^*)$, $k\in \{0,1,2\}$,  there exists a unique $\beta \in \mathring{H}^{s+2}(\bbT^d;\Lambda^k(\bbR^d)^*)$ such that $-\Delta \beta = \alpha-\hat{\alpha}(0)$ given by $
\beta=(-\Delta)^{-1}(\alpha-\hat{\alpha}(0))= \sum_{n\in \bbZ^d}|n|^{-2}\hat{\alpha}(n)\psi_n$, which yields the Hodge decomposition 
$$
\alpha = \bd \delta \beta +  \delta \bd \beta +\hat{\alpha}(0)=\bd \delta (-\Delta)^{-1}(\alpha-\hat{\alpha}(0))+ \delta \bd (-\Delta)^{-1}(\alpha-\hat{\alpha}(0)) + \hat{\alpha}(0).
$$
Let $$\mathring{H}^{s}_{\bd}(\bbT^d;\Lambda^{k}(\bbR^d)^*)=\bd \mathring{H}^{s+1}(\bbT^d;\Lambda^{k-1}(\bbR^d)^*)=\{\alpha\in \mathring{H}^s(\bbT^d;\Lambda^{k}(\bbR^d)^*): \bd \alpha =0\},$$
which is understood to be $\emptyset$ if $k=0$,
$$\mathring{H}^{s}_{\delta }(\bbT^d;\Lambda^{k}(\bbR^d)^*)=\delta \mathring{H}^{s+1}(\bbT^d;\Lambda^{k+1}(\bbR^d)^*)=\{\alpha\in \mathring{H}^s(\bbT^d;\Lambda^{k}(\bbR^d)^*): \delta \alpha =0\},$$ and  $\clH^k=\operatorname{Ker}\Delta =\Lambda^k(\bbR^d)^*$. Thus, we obtain the following orthogonal decomposition for $k\in \{0,1,2\}$:
\begin{equation}\label{eq:HodgeDecomp}
H^s(\bbT^d;\Lambda^k(\bbR^d)^*)=\mathring{H}^{s}_{\bd}(\bbT^d;\Lambda^{k}(\bbR^d)^*)\oplus \mathring{H}^{s}_{\delta}(\bbT^d;\Lambda^{k}(\bbR^d)^*) \oplus \clH^k.
\end{equation}
For  given $u\in \clD'(\bbT^d;\bbR^d)$, denote $\operatorname{div} u=\partial_{x^i}u^i=\partial_{x^i}u_i=-\delta u^{\flat}$. Moreover, let 
\begin{gather*}
\mathring{H}^s_{\sigma}(\bbT^d;\bbR^d)=\{u\in \mathring{H}^s(\bbT^d;\bbR^d): \operatorname{div} u =0\},\;\;  W^{m,\infty}_{\sigma}(\bbT^d;\bbR^d)=\{u\in W^{m,\infty}(\bbT^d;\bbR^d): \operatorname{div} u =0\},\\
\mathring{C}^\infty_{\sigma}(\bbT^d;\bbR^d)=\{u\in C^\infty(\bbT^d;\bbR^d): \operatorname{div} u =0\}, \;\;  \mathring{D}_{\sigma}'(\bbT^d;\bbR^d)=\{u\in \clD'(\bbT^d;\bbR^d): \operatorname{div} u =0\}.
\end{gather*}
Henceforth, we will simply write $\mathring{H}^s_{\sigma}, W^{m,\infty}_{\sigma}, \mathring{C}^\infty_{\sigma}$, and $\mathring{D}_{\sigma}'$.
Applying the sharp operator $\sharp$ to \eqref{eq:HodgeDecomp} in the case $k=1$, we derive the $d$-dimensional Helmholtz decomposition of vector fields:
$$
H^s(\bbT^d;\bbR^d)=\nabla  \mathring{H}^{s+1}(\bbT^d;\bbR) \oplus \mathring{H}^s_{\sigma}(\bbT^d;\bbR^d)\oplus \bbR^d.
$$
Let $Q\in \clL(H^s(\bbT^d;\bbR^d);\nabla  \mathring{H}^{s+1}(\bbT^d;\bbR))$, $P\in \clL(H^s(\bbT^d;\bbR^d); \nabla  \mathring{H}^{s}_{\sigma})$, and $H\in \clL(H^s(\bbT^d;\bbR^d);\bbR^d)$ denote the projections associated with the decomposition so that $I=P+Q+H$.

Let $$\operatorname{BS}= \sharp (-\Delta)^{-1}\delta =\sharp \delta (-\Delta)^{-1}$$ denote the inverse of $\bd \flat : H^{s+1}_{\sigma}(\bbT^d;\bbR^d)\rightarrow H^{s}_{\bd}(\bbT^d;\Lambda^{2}(\bbR^d)^*)$. The operator $\operatorname{BS}$ is called the Biot-Savart operator. Owing to Propositions 7.5 and 7.7 of  \cite{bahouri2011fourier}, for all $\omega \in \mathring{H}^{s-1}(\bbT^d;\Lambda^2(\bbR^d)^*)$ such that $s>\frac{d}{2}+1$, we have
\begin{align}
| \operatorname{BS}\omega|_{H^s}&\lesssim_d |\nabla \operatorname{BS}\omega|_{H^{s-1}}\lesssim_{d,s} |\omega|_{H^{s-1}}\lesssim_{d,s} | \operatorname{BS}\omega|_{H^s},\label{ineq:curl_equiv}\\
|\nabla \operatorname{BS}\omega|_{L^\infty}&\lesssim_{d,s} \ln\left(e+| \omega|_{H^{s-1}}\right)|\omega|_{L^{\infty}}, \;\;\forall \omega  \in \mathring{H}^{s-1}(\bbT^d;\Lambda^2(\bbR^d)^*)\label{ineq:Kato}.
\end{align}

\subsubsection{The Lie derivative}\label{sec:prelim_Lie}

Let $s>\frac{d}{2}+1$ so that $|\nabla f|_{L^\infty}<\infty$ for all $f\in H^s(\bbT^d;V)$. 
For $v\in H^{s}(\bbT^d; \bbR^d)$ and $k\in \{0,1,2\}$, the Lie derivative $\pounds_{v}\in \clL(H^s(\bbT^d;\Lambda^k(\bbR^d)^*);H^{s-1}(\bbT^d;\Lambda^k(\bbR^d)^*)$ acts on $f\in H^s(\bbT^d;\bbR)$, $\alpha\in H^s(\bbT^d;(\bbR^d)^*)$, $\omega\in H^s(\bbT^d;\Lambda^2(\bbR^d)^*)$ by
\begin{gather*}
\pounds_v f =v\cdot \nabla f= v^j\partial_{x^j}f, \qquad \pounds_v\alpha = (v^j\partial_{x^j} \alpha_i+\alpha_j\partial_{x^i}v^j)e^i,\\
\pounds_v \omega=\sum_{i<j}(v^{q}\partial_{x^{q}}\omega_{ij}+ \omega_{qj}\partial_{x^{i}}v^q +\omega_{iq}\partial_{x^{j}}v^q  )  e^i\wedge e^j.
\end{gather*}
Indeed, it is a direct consequence of \eqref{ineq:alg_sob_1} that the range of the Lie derivative lies in $H^{s-1}$. A simple computation shows that $\bd \pounds_{v}=\pounds_{v}\bd$. Moreover, if $\bd \omega=0$, then
$$
\pounds_{v}\omega=\sum_{i<j}\left(\partial_{x^i}(v^q\omega_{qj})-\partial_{x^j}(v^q\omega_{qi})\right)e^i\wedge e^j=\bd [\omega(v,\cdot)],
$$
which is  just a special case of Cartan's formula.

The Lie derivative and covariant derivative $\pounds_{v},\nabla_v=v\cdot \nabla\in \clL(H^s(\bbT^d;\Lambda^k(\bbR^d)^*);H^{s-1}(\bbT^d;\bbR^d)$ act on $u\in H^s(\bbT^d;\bbR^d)$ by
\begin{equation*}
\pounds_v u=(v^j\partial_{x^j}u^i-u^j\partial_{x^j}v^i)e_i, \quad  \nabla_{v}u=v\cdot \nabla u = v^j\partial_{x^j} u^ie_i.
\end{equation*}
Stoke's theorem implies that for all $w,u,v\in H^s(\bbT^d;\bbR^d)$, 
$$
(\pounds_v w, u)_{L^2} =-\int_{\bbT^d}w^i(v^j\partial_{x^j}u^i + u^i\operatorname{div} v  +u^j\partial_{x^i}v^j)dV=:(w,\pounds_{v}^* u)_{L^2}.
$$
In particular, if $\operatorname{div} v= 0$, then $$\pounds_{v}^*u=-(v^j\partial_{x^j}u^i   +u^j\partial_{x^i}v^j)e_i$$
and
\begin{gather}
-\pounds_{v}^*u=\sharp \pounds_v u^{\flat} \quad \Rightarrow \quad  -\bd \flat \pounds_{u}^* w =\bd \pounds_v u^{\flat}=-\pounds_v \bd u^{\flat}, \label{eq:ad_star_Lie}\\
P\pounds_{v}^*u=P\pounds_{v}^* Pu. \label{eq:projection_Lie}
\end{gather}
Moreover, 
\begin{equation}\label{eq:covd_Lie}
u\cdot \nabla u= -\pounds_{u}^*u +2^{-1}\nabla |u|^2.
\end{equation}

If $d=2$, then the Hodge Star (see, e.g, Section 6.2 of \cite{abraham2012manifolds}) map $\star:  H^s(\bbT^d;\Lambda^2(\bbR^2)^*)\rightarrow H^s(\bbT^2;\bbR)$ defined by $\tilde{\omega} =\omega_{12}$ is an isomorphism and for all $v\in H^s_{\sigma}(\bbT^d;\bbR^d)$, we have 
\begin{equation}\label{eq:star_commute_scalar}
\star \pounds_v \omega = \pounds_v \star \omega=v\cdot \nabla \omega. 
\end{equation}
Moreover, if $d=3$, then the Hodge Star map $\sharp\star  H^s(\bbT^d;\Lambda^2(\bbR^3)^*)\rightarrow H^s(\bbT^3;\bbR^3)$ given by $\sharp \star \omega =\omega_{23}e_1+\omega_{31}e_2+\omega_{12}e_3$ is an isomorphism and for all $v\in H^s_{\sigma}(\bbT^d;\bbR^d)$, we have 
\begin{equation}\label{eq:star_commute_vector}
\sharp \star \pounds_v \omega = \pounds_v \sharp\star \omega=[v,\sharp \star \omega]. 
\end{equation}

\subsection{Geometric rough paths and the sewing lemma}
For an arbitrarily given closed interval $I=[a,b]$,  denote $$\Delta_I := \{(s,t)\in I^2: s\le t\} \quad \textnormal{ and } \quad  \Delta^{2}_I := \{(s,\theta,t)\in I^3: s\le\theta\le t\}.$$  If the interval $I=[0,T]$, we write $\Delta_{T}=\Delta_{I}$. 

Let $(E, |\cdot|_E)$ be an arbitrarily given Banach space with norm $| \cdot |_E$. We say a two-parameter function $g: \Delta_I \rightarrow E$ has finite $p$-variation for some $p\in (0,\infty)$ on $I$ if 
$$
|g|_{p-\textnormal{var};I;E}:=\sup_{\mathfrak{p}=(t_i)\in \clP(I)}\left(\sum_{i=1}^{\# \mathfrak{p} -1}|g _{t_i t_{i+1}}|^p_E\right)^{\frac{1}{p}}<\infty, 
$$
where $\mathcal{P}(I)$ is the set of all  finite partitions of $I$ and  $\#\mathfrak{p}$ denotes the number of points in a given partition $\mathfrak{p}\in \mathcal{P}(I)$.  Denote by $C_2^{p-\textnormal{var}}(I; E)$ the set of all continuous functions with finite $p$-variation on $I$ equipped with the seminorm $|\cdot |_{p-\textnormal{var}; I;E}$. Denote by $C^{p-\textnormal{var}}(I; E)$ the set of all paths $z : I \rightarrow E$ such that $\delta z \in C_2^{p-\textnormal{var}}(I; E)$, where  $$\delta z_{st} := z_t - z_s, \quad (s,t)\in \Delta_I.$$

For an arbitrary interval $I\subset \bbR$ that is not necessarily closed, denote by $C_{2,\textnormal{loc}}^{p-\textnormal{var}}(I; E)$ the set of all continuous functions $g:\Delta_I\rightarrow E$ such that there exists a countable sequence of closed intervals  $\{I_k\}$ such that $\cup_k I_k=I$ and  $g\in C_{2}^{p-\textnormal{var}}(I_k; E)$. 

A continuous mapping $\varpi : \Delta_I \rightarrow [0,\infty)$ is called a control on $I$ if $\omega(s,s) = 0$  for all $s\in I$ and if for all $(s,\theta, t)\in \Delta^{2}_I$, 
$$
\varpi(s,\theta)+\varpi(\theta ,t)\le \varpi(s,t),
$$
which is referred to as superadditivity.
If $\varpi_1$ and $\varpi_2$ are controls, then for all $\alpha,\beta\in \bbR_+$ such that $\alpha+\beta\ge 1$, $\varpi_1^{\alpha}\varpi_2^{\beta}$ is a control (see, e.g., \cite{FrVi10}[Ex.\ 1.9]). 

If $g \in C^{p-\textnormal{var}}_2(I; E)$ for a given $p \in (0,\infty)$, then $\varpi_g: \Delta_I \rightarrow [0,\infty)$ defined for all $(s,t)\in \Delta_I$ by
$$
\varpi_g(s,t)= |g|_{p-\textnormal{var};[s,t];E}^p 
$$
is a control (see, e.g.,   \cite{FrVi10}[Prop. 5.8]). Moreover, it is straightforward to check that for all $(s,t)\in \Delta_I$
$$
\inf \{ \varpi(s,t)^{\frac{1}{p}} : \varpi \textnormal{ is a control s.t. }|g_{r\theta}|_E \leq \varpi(r,\theta)^{\frac{1}{p}}, \; \forall (r, \theta)  \in \Delta_{[s,t]} \} , 
$$
is an equivalent semi-norm  on $C_2^{p-\textnormal{var}}(I;E)$.  

For an arbitrarily given two-index map $g: \Delta_{I}\rightarrow \bbR$,  define the increment operator
$$
\delta g_{s \theta t} = g_{st} - g_{\theta t} - g_{s \theta}, \quad    (s, \theta , t)\in \Delta^2_I.
$$
We will now give the definition of a rough path and  geometric rough path. We refer the reader to \cite{MR2314753, FrVi10, friz2020course} for more thorough expositions.

\begin{definition}\label{def:rough_path}
Let $K \in\bbN $ and $p\in[2,3)$. A  $p$-variation rough path is a  pair 
$$
\bZ=(Z, \mathbb{Z}) \in \clC^{p-\textnormal{var}}(I;\bbR^K):=C_2^{p-\textnormal{var}}(I;\bbR^{K}) \times C^{\frac{p}{2}-\textnormal{var}}_2 (I; \bbR^{K\times K}) 
$$
that satisfies the Chen relations 
\begin{equation*}\label{eq:chen_rel_Z}
\delta Z_{s\theta t}=0 \quad \textnormal{ and } \quad  \delta \mathbb{Z}_{s\theta t}=Z_{s\theta} \otimes  Z_{\theta t},   \quad  \forall (s, \theta , t)\in \Delta^2_I.
\end{equation*}
For an arbitrarily given $\bZ\in  \clC^{p-\textnormal{var}}([0,T];\bbR^K)$,  denote
\begin{equation}\label{def:control_of_rough_path}
\varpi_{\bZ} = \inf \{\varpi :  \varpi \textnormal{ is a control s.t. } |Z_{st}|\le \varpi(s,t) \; \textnormal{and} \; |\bbZ_{st}|\le \varpi(s,t), \;\; \forall (s,t)\in \Delta_I\}.
\end{equation}
Given a smooth path $z: I\rightarrow \bbR^K$, we define its canonical lift $\bZ=(Z, \mathbb{Z})\in \clC^{p-\textnormal{var}}([0,T];\bbR^K)$  by
$$Z_{st}=\delta z_{st}\quad \textnormal{and} \quad \mathbb{Z}_{st}:=\int_s^tZ_{s r} \otimes \rmd z_r, \;\; (s,t)\in I.$$
An element $\mathbf{Z}=(Z, \mathbb{Z}) \in \clC^{p-\textnormal{var}}(I;\bbR^K)$ is said to be geometric if it can be obtained as the limit in the  product topology of a sequence of rough paths  $\{(Z^{n},\mathbb{Z}^{n})\}_{n=1}^\infty$ 
that are canonical lifts of smooth paths $z^n:I \to \bbR^K$.
We denote by $\clC^{p-\textnormal{var}}_g(I;\bbR^K)$ the set of geometric $p$-variation rough paths and endow it with the product topology. Finally, we denote by $\clC^{p-\textnormal{var}}_g(\bbR_+;\bbR^K)$ the corresponding Fr\'echet space of pairs  $\bZ=(Z,\bbZ): \Delta_{[0,\infty)}\rightarrow \bbR^K\times \bbR^{K\times K}$ belonging to $\clC^{p-\textnormal{var}}_g([0,T];\bbR^K)$ for all positive $T$.
\end{definition}

The following lemma, referred to as the \textit{sewing lemma}, lies at the very foundation of the theory of rough paths. The proof is a straightforward modification of  \cite{DeGuHoTi16}[Lemma 1.2] or  \cite{friz2020course}[Lemma 4.2].

\begin{lemma}[Sewing lemma] \label{lem:sewing}
Let  $\varpi_1$ and $\varpi_2$ be controls on $I$. Let $L\in (0,\infty)$, $\zeta\in [0,1)$ and $p\ge \zeta$. Assume that $h\in  C_{2,\textnormal{loc}}^{p-\textnormal{var}}(I;E)$  is such  that for all $(s,u,t)\in \Delta^2_I $ with $\varpi_1(s,t) \leq L$,
$$
|\delta h_{sut}|\le \omega_2(s,t)^{\frac{1}{\zeta}}.
$$
Then there exists a unique path $\clI h\in  C^{p-\textnormal{var}}(I;E) $ such that $\clI h_a=0$, $\Lambda h:=h-\delta \clI h\in C_{2,\textnormal{loc}}^{\zeta-\textnormal{var}}(I;E)$, and for all $(s,t)\in \Delta_I $ with $\varpi_1(s,t) \leq L$,
\begin{equation}\label{ineq:sewing_lemma}
|(\Lambda   h)_{st}|\le C_{\zeta}\varpi_2(s,t)^{\frac{1}{\zeta}}.
\end{equation}
for a  universal positive constant $C_{\zeta}$. Moreover, for all $(s,t)\in \Delta_I$,
$$
\delta \clI h_{st} =\lim_{|\mathfrak{p}|\rightarrow 0} \sum_{i=1}^{\# \mathfrak{p} -1} h_{t_it_{i+1}}, 
$$
where the limit is understood as a limit of nets over finite partitions $\mathfrak{p}\in \clP([s,t])$ of the interval $[s,t]\subset I$ partially ordered by inclusion with mesh size $|\mathfrak{p}|$ tending to zero. 
\end{lemma}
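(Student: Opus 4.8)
The plan is to prove the sewing lemma via the classical iterated-bisection (dyadic coarsening) argument of Young--Lyons--Gubinelli, adapted to the $p$-variation/control setting rather than the H\"older setting. The role of the hypothesis $\varpi_1(s,t)\le L$ is purely to restrict attention to a sufficiently small scale; on any closed subinterval $[a,b]$ one first covers $[a,b]$ by finitely many intervals on which $\varpi_1\le L$ (possible since $\varpi_1$ is a finite control), constructs the sewing on each, and glues. So it suffices to fix a closed interval $J\subseteq I$ with $\varpi_1(J)\le L$ and carry out the construction there; the extension to non-closed $I$ and the $C^{p-\textnormal{var}}_{2,\textnormal{loc}}$ statement then follows by exhausting $I$ by such intervals.

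The key construction: for $(s,t)\in\Delta_J$ and a partition $\mathfrak p$ of $[s,t]$, set $h^{\mathfrak p}_{st}=\sum_i h_{t_it_{i+1}}$. First I would show that removing one interior point from a partition changes $h^{\mathfrak p}$ by a single increment $\delta h_{s u t}$, hence is controlled by $\varpi_2(\cdot,\cdot)^{1/\zeta}$. Then, given any partition with $N\ge 2$ interior points, one removes the point $u=t_j$ for which $\varpi_2(t_{j-1},t_{j+1})$ is smallest; by superadditivity of $\varpi_2$ this quantity is at most $\tfrac{2}{N}\varpi_2(s,t)$ (the standard pigeonhole for superadditive functions, using $\sum_j \varpi_2(t_{j-1},t_{j+1})\le 2\varpi_2(s,t)$). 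Iterating and summing the geometric-type series $\sum_{N\ge 2}(2/N)^{1/\zeta}<\infty$ (which converges precisely because $1/\zeta>1$, i.e.\ $\zeta<1$) gives the Cauchy estimate
\begin{equation*}
|h^{\mathfrak p}_{st}-h_{st}|\le \sum_{N\ge 2}\Bigl(\tfrac{2}{N}\Bigr)^{1/\zeta}\varpi_2(s,t)^{1/\zeta}=:C_\zeta\,\varpi_2(s,t)^{1/\zeta},
\end{equation*}
uniformly in the partition. Since refinements form a directed set and any two partitions have a common refinement, $(h^{\mathfrak p}_{st})$ is a Cauchy net in $E$; call its limit $A_{st}$. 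One then checks $A$ is additive: $\delta A_{sut}=0$ for $s\le u\le t$ in $J$, by concatenating partitions of $[s,u]$ and $[u,t]$. Additivity means $A=\delta z$ for the path $z_t:=A_{at}$ (with $z_a=0$), so $\clI h:=z$, and by construction $\delta\clI h_{st}=A_{st}=\lim_{|\mathfrak p|\to 0}h^{\mathfrak p}_{st}$, giving the last displayed formula. The remainder $\Lambda h:=h-\delta\clI h=h-A$ satisfies $|(\Lambda h)_{st}|\le C_\zeta\varpi_2(s,t)^{1/\zeta}$ on $\{\varpi_1\le L\}$ from the Cauchy estimate (take $\mathfrak p=\{s,t\}$), which is exactly \eqref{ineq:sewing_lemma}; since $\varpi_2^{1/\zeta}$ is $1/\zeta$-superadditive-dominated and $\zeta<1$, this shows $\Lambda h\in C^{\zeta-\textnormal{var}}_{2,\textnormal{loc}}(I;E)$, and $\delta\clI h=h-\Lambda h\in C^{p-\textnormal{var}}_{2,\textnormal{loc}}$ since $h$ is and $\zeta\le p$, whence $\clI h\in C^{p-\textnormal{var}}(I;E)$.

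Uniqueness: if $z,\tilde z$ are two such paths, then $g:=\delta z-\delta\tilde z=\Lambda\tilde h-\Lambda h$ (for the respective remainders) is additive, so $\delta g_{sut}=0$, yet $|g_{st}|\lesssim \varpi_2(s,t)^{1/\zeta}$ with $1/\zeta>1$; an additive two-index map with such a bound must vanish (partition $[s,t]$ finely: $|g_{st}|=|\sum_i g_{t_it_{i+1}}|\le\sum_i\varpi_2(t_i,t_{i+1})^{1/\zeta}\le \max_i\varpi_2(t_i,t_{i+1})^{1/\zeta-1}\varpi_2(s,t)\to0$ by continuity of $\varpi_2$). Hence $z-\tilde z$ is constant, and both vanish at $a$, so $z=\tilde z$.

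The main obstacle — really the only delicate point — is the bookkeeping for the quantitative remainder bound \eqref{ineq:sewing_lemma}: one must be careful that the hypothesis is only assumed where $\varpi_1(s,t)\le L$, so every sub-increment $\delta h_{t_{j-1}t_jt_{j+1}}$ appearing in the bisection of $[s,t]$ must itself satisfy $\varpi_1(t_{j-1},t_{j+1})\le \varpi_1(s,t)\le L$ — which holds by superadditivity (monotonicity) of $\varpi_1$ — and that the pigeonhole bound $\min_j\varpi_2(t_{j-1},t_{j+1})\le \tfrac{2}{N}\varpi_2(s,t)$ uses only superadditivity of $\varpi_2$, with the extra subtlety that one should remove points away from the endpoints (so there are $N$ candidates among $N$ interior points) — this is the standard Lyons--Gubinelli argument and the referenced proofs in \cite{DeGuHoTi16} and \cite{friz2020course} carry over verbatim once the $p$-variation controls replace the H\"older scales.
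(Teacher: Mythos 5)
Your proof is correct and is exactly the standard Young--Lyons--Gubinelli coarsening argument in the $p$-variation/control setting that the paper itself defers to by citing \cite{DeGuHoTi16}[Lemma 1.2] and \cite{friz2020course}[Lemma 4.2], so there is no meaningful divergence from the paper's intended proof. One small bookkeeping note: with $k$ interior points the pigeonhole gives $\min_j\varpi_2(t_{j-1},t_{j+1})\le \tfrac{2}{k}\varpi_2(s,t)$, so iterating all the way down to the trivial partition yields $C_\zeta=\sum_{k\ge 1}(2/k)^{1/\zeta}$ (start the sum at $k=1$, not $k=2$), and it is worth writing out that for any refinement $\mathfrak p$ of $\mathfrak p_0$ one has $|h^{\mathfrak p}_{st}-h^{\mathfrak p_0}_{st}|\le C_\zeta\sum_i\varpi_2(\tau_i,\tau_{i+1})^{1/\zeta}\le C_\zeta\max_i\varpi_2(\tau_i,\tau_{i+1})^{1/\zeta-1}\varpi_2(s,t)\to 0$ to fully justify that $(h^{\mathfrak p}_{st})$ is Cauchy both as a net under refinement and as $|\mathfrak p|\to 0$.
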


\section{Main results}\label{Main Results}
\subsection{Formulations of the rough incompressible Euler system}

Let $d\in \{2,3,\ldots\}$, $K\in \bbN$, $p\in [2,3)$, and  $m\in \bbN$ be such that  $m\ge m_*:= \lfloor \frac{d}{2}\rfloor + 2$. For an arbitrarily given initial condition $u_0\in \mathring{H}_{\sigma}^{m},$ geometric rough path $\bZ=(Z,\mathbb{Z})\in \mathcal{C}^{p-\textnormal{var}}_{g}(\bbR_+;\bbR^K)$, and collection of vector fields $\xi \in (W^{m+2,\infty}_{\sigma})^K$, we consider the rough incompressible Euler system given by
\begin{equation}
\begin{cases}\label{eq:rough_velocity_unprojected}
\rmd u +u\cdot \nabla u\,\rmd t -\pounds_{\xi_k}^*u\,\rmd \bZ^k_t = -\rmd\nabla  q_t - \rmd h_t \quad \textnormal{on}\;\; (0,T]\times \bbT^d ,\\
\operatorname{div}  u=0 \quad \textnormal{on}\;\; [0,T]\times \bbT^d ,\\
\int_{\bbT^d}u\, dV=0,\quad  \int_{\bbT^d}q\,dV=0 \quad \textnormal{on} \;\;[0,T],\\
u=u_0,\quad q=0,\quad  h=0 \quad \textnormal{on} \;\;\{0\}\times \bbT^d ,
\end{cases}
\end{equation}
where (see Section \ref{sec:prelim_Lie})
$
\pounds_{\xi_k}^*u=-(\xi_k^j\partial_{x^j}u^i+u^j\partial_{x^i}\xi_k^j)e_i.
$
Equation \eqref{eq:rough_velocity_unprojected} is to be solved for an unknown divergence and mean-free vector field (velocity) $u: [0,T]\times \bbT^d\rightarrow \bbR^d$, mean-free scalar field (`time-integrated' pressure) $q:[0,T] \times \bbT^d\rightarrow \bbR$, and harmonic constant (time-integrated) $h: [0,T]\rightarrow \bbR^d$.  The pressure $q$ and harmonic constant $h$ should be understood as Lagrange multipliers associated with the divergence-free and mean-free constraints, respectively. 

In  contrast to the unperturbed  system (i.e., $\xi\equiv 0$ or $\bZ\equiv 0$), the system \eqref{eq:rough_velocity_unprojected} does not preserve the mean of the initial condition if the $\xi$'s are not constant in space due to the term  $u^j\partial_{x^i}\xi_k^j$. Indeed, upon formally integrating \eqref{eq:rough_velocity_unprojected} over $\bbT^d$, all other terms vanish due to the periodic boundary conditions. Thus, the Lagrangian multiplier $h$ is required to enforce the constraint that the velocity $u$ remains mean-free. It is worth noting that we impose the mean-free constraint because it simplifies our analysis. For details on how to avoid this assumption, we refer to \cite{hofmanova2019rough}, which establishes the existence of a strong solution of the associated viscous version of equation \eqref{eq:rough_velocity_unprojected}.

Applying the divergence and mean-free projection operator $P$ to \eqref{eq:rough_velocity_unprojected}, we find 
\begin{equation}
\begin{cases}\label{eq:rough_velocity_projected} \tag{RE}
\rmd u+P(u\cdot \nabla u)\, \rmd t -P\pounds_{\xi_k}^*u\, \rmd \bZ^k_t=0
\quad \textnormal{on}\;\; (0,T]\times \bbT^d ,\\
u=u_0 \quad \textnormal{on} \;\;\{0\}\times \bbT^d ,
\end{cases}
\end{equation}
which is to be solved for $u:[0,T]\rightarrow  \mathring{H}_{\sigma}^{m}$. We will now define the notion of solution of \eqref{eq:rough_velocity_projected} we will use throughout the paper, which is simply a specific case of Definition \ref{def:urd_eqn}. Formally, the definition can be obtained by integrating \eqref{eq:rough_velocity_projected} over an arbitrary interval and  iterating the equation into the $\rmd \bZ^k$-integral twice. In fact, in the proof of local existence  (see Section \ref{sec:local_wp}), we smooth out the path $\bZ$ and iterate in such a manner.  
  
\begin{definition}[$H^{m}$-solution]\label{def:solution_velocity}  Let $m\ge m_*$ and $\xi \in (W^{m+2,\infty}_{\sigma})^K$.
We say a bounded path $u:[0,T]\rightarrow H^m$ is a $H^{m}$-solution  of   \eqref{eq:rough_velocity_projected} on the interval $[0,T]$ if $u|_{t=0}=u_0$ and 
\begin{equation}\label{eq:def_solution_velocity}
u^{P,\natural}_{st}:=\delta u_{st}+ \int_s^tP(u_r\cdot \nabla u_r)\, \rmd r-  P\pounds_{\xi_k}^* u_sZ^k_{st} -P\pounds_{\xi_k}^*P\pounds_{\xi_l}^* u_s\mathbb{Z}^{lk}_{st}, \quad (s,t)\in \Delta_T,
\end{equation}  
satisfies $u^{P,\natural} \in C^{\frac{p}{3}-\textnormal{var}}_{2,\textnormal{loc}}([0,T];\mathring{H}_{\sigma}^{m-3})$.
A bounded path $u:[0,T]\rightarrow H^m$ is said to be a $H^{m}$-solution of  \eqref{eq:rough_velocity_projected} on the interval $[0,T)$ if $u$  is a $H^{m}$-solution   of   \eqref{eq:rough_velocity_projected} on the interval $[0,T']$ for all $0<T'<T$. 
\end{definition}
\begin{remark}
An $H^m$-solution actually possesses additional time-smoothness properties, as proven in Theorem \ref{thm:local_wp} (see, also, Theorem \ref{thm:URDRemEst}).
\end{remark}

In Section \ref{sec:pressure_recovery}, given $H^{m}$-solution $u$ of \eqref{eq:rough_velocity_projected} on an interval $[0,T]$, we  apply the sewing lemma (i.e., Lemma \ref{lem:sewing}) to  construct the rough integral  $\int_0^\cdot \pounds_{\xi_k}^*u_s\rmd \bZ_s^k\in  C^{p-\textnormal{var}}([0,T];H^{m-3})$; that is, for all $t\in [0,T],$
$$
\int_0^t \pounds_{\xi_k}^*u_s\,\rmd \bZ_s^k:=\underset{\mathfrak{p}\in \clP([0,t])}{\lim_{|\mathfrak{p}|\rightarrow 0} }\sum_{i=1}^{\# \mathfrak{p} -1}\left( \pounds_{\xi_k}^* u_{t_i}Z^k_{t_it_{i+1}} +\pounds_{\xi_k}^*P\pounds_{\xi_l}^* u_{t_i}\mathbb{Z}^{lk}_{t_it_{i+1}}\right).
$$
It follows by the continuity and linearity of the divergence and mean-free projection map $P: H^{m}\rightarrow \mathring{H}_{\sigma}^{m}$ that for all $t\in [0,T]$,
\begin{equation}\label{eq:rv_int_proj}
u_t-u_0+ \int_0^tP(u_s\cdot \nabla u_s)\,\rmd s- \int_0^tP\pounds_{\xi_k}^*u_s\, \rmd \bZ^k_{s} =0,
\end{equation}
where $P\int_0^{\cdot} \pounds_{\xi_k}^*u_s\,\rmd \bZ_s^k\in C^{p-\textnormal{var}}([0,T];\mathring{H}_{\sigma}^{m-3})$ is defined from the projected expansion appearing in the right-hand-side of \eqref{eq:def_solution_velocity}.
The pressure $q$ and harmonic constant $h$ can then be recovered using the Helmholtz decomposition $P=I-Q-H$ (see Section \ref{sec:prelim_Hodge})
$$
\nabla q_t := \int_0^t Q(u_s\cdot \nabla u_s)\, \rmd s - \int_0^t Q\pounds_{\xi_k}^*u_s\,\rmd \bZ_s^k, \quad h_t := -\int_0^t H\pounds_{\xi_k}^*u_s\,\rmd \bZ_s^k, \quad t\in [0,T].
$$

\begin{proposition}[Recovery of pressure and harmonic constant]\label{prop:pressure_and_constant_recovery}
If $u$ is a $H^{m}$-solution of \eqref{eq:rough_velocity_projected} on the interval $[0,T]$, then there exists $\int_0^\cdot \pounds_{\xi_k}^*u_sdZ_s^k\in  C^{p-\textnormal{var}}([0,T];H^{m-3})$ and uniquely determined  $q \in C^{p-\textnormal{var}}([0,T];\mathring{H}^{m-2})$ and  $h\in C^{p-\textnormal{var}}([0,T];\bbR^d)$  initiating from zero such that for all $t\in [0,T]$,
\begin{equation}\label{eq:def_solution_velocity_pressure}
u_t-u_0+ \int_0^tu_s\cdot \nabla u_s\,\rmd s- \int_0^t\pounds_{\xi_k}^*u_s\, \rmd \bZ^k_{s} =-\nabla  q_{t} - h_{t}.
\end{equation}
\end{proposition}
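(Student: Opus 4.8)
Throughout, write $w_t:=u_t-u_0+\int_0^t u_s\cdot\nabla u_s\,\rmd s-\int_0^t\pounds_{\xi_k}^*u_s\,\rmd\bZ^k_s$. The plan has three parts: (i) construct the rough integral $\int_0^\cdot\pounds_{\xi_k}^*u_s\,\rmd\bZ^k_s$ by the sewing lemma; (ii) verify the projected identity \eqref{eq:rv_int_proj}, i.e.\ $Pw\equiv0$; (iii) peel $q$ and $h$ off the remaining part of $w$ via the Helmholtz decomposition $I=P+Q+H$.

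For (i) I would set up the germ $\mu_{st}:=\pounds_{\xi_k}^*u_s\,Z^k_{st}+\pounds_{\xi_k}^*P\pounds_{\xi_l}^*u_s\,\bbZ^{lk}_{st}$, whose second-level coefficient is dictated by the first-order expansion $\delta u_{st}=P\pounds_{\xi_l}^*u_s\,Z^l_{st}+O(\varpi^{2/p})$ built into Definition \ref{def:solution_velocity}. Applying the Chen relations for $(Z,\bbZ)$ and inserting \eqref{eq:def_solution_velocity} for $\delta u$, the $\bbZ$-Chen term $\pounds_{\xi_k}^*P\pounds_{\xi_l}^*u_s\,Z^l_{s\theta}Z^k_{\theta t}$ cancels exactly against the $Z^l_{s\theta}$-part of $-\pounds_{\xi_k}^*(\delta u_{s\theta})Z^k_{\theta t}$, leaving $\delta\mu_{s\theta t}$ expressed through $u^{P,\natural}_{s\theta}$, the drift $\int_s^\theta P(u\cdot\nabla u)\,\rmd r$, and terms carrying at least three $Z,\bbZ$-increments. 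Using $u^{P,\natural}\in C^{p/3-\textnormal{var}}_{2,\textnormal{loc}}(\mathring H^{m-3}_{\sigma})$, the bounds $|Z_{\theta t}|\le\varpi_{\bZ}^{1/p}$, $|\bbZ_{\theta t}|\le\varpi_{\bZ}^{2/p}$, $\bigl|\int_s^\theta P(u\cdot\nabla u)\,\rmd r\bigr|\lesssim|\theta-s|$, and that a product of two controls with exponents summing to $\ge1$ is again a control, one obtains $|\delta\mu_{s\theta t}|\le C\varpi(s,t)^{3/p}$ in a suitable Sobolev norm; Lemma \ref{lem:sewing} then delivers $\clI\mu=\int_0^\cdot\pounds_{\xi_k}^*u_s\,\rmd\bZ^k_s\in C^{p-\textnormal{var}}([0,T];H^{m-3})$ with $|\Lambda\mu_{st}|=|(\mu-\delta\clI\mu)_{st}|\lesssim\varpi(s,t)^{3/p}$ and the Riemann-sum representation provided by the lemma. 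The step I expect to be the main obstacle is the derivative bookkeeping in this estimate: $\pounds_{\xi_k}^*$ costs one spatial derivative, and applied to the remainder $u^{P,\natural}_{s\theta}$ it would naively land in $H^{m-4}$. To reach the claimed $H^{m-3}$ one should split $\mu=P\mu+Q\mu+H\mu$, identify $\clI(P\mu)_t=u_t-u_0+\int_0^t P(u\cdot\nabla u)\,\rmd s$ directly from \eqref{eq:def_solution_velocity} (the remainder there being $u^{P,\natural}$), and for $Q\mu,H\mu$ exploit that every integrand entering the defect is divergence-free, so $\operatorname{div}\pounds_{\xi_k}^*(\cdot)$ involves only a first derivative and $Q\pounds_{\xi_k}^*=\nabla(-\Delta)^{-1}\operatorname{div}\pounds_{\xi_k}^*$ (together with $H\pounds_{\xi_k}^*$) recovers the lost derivative — this is also where the extra regularity $\xi\in(W_{\sigma}^{m+2,\infty})^K$ is used.

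For (ii): $P$ is bounded linear on $H^{m-3}$, hence commutes with the Bochner integral and (pushing it through the Riemann sums, or by uniqueness in Lemma \ref{lem:sewing}) with the rough integral, so $P\!\int_0^t\pounds_{\xi_k}^*u_s\,\rmd\bZ^k_s=\clI(P\mu)_t$ with $P\mu$ the projected expansion of \eqref{eq:def_solution_velocity}. Then $Pw_t=u_t-u_0+\int_0^t P(u\cdot\nabla u)\,\rmd s-\clI(P\mu)_t$ has increments $\delta(Pw)_{st}=u^{P,\natural}_{st}+\Lambda(P\mu)_{st}$, an honest path-increment of finite $\tfrac p3$-variation; since $\tfrac p3<1$ such an exact increment must vanish (over any partition of $[s,t]$ its norm is at most $\max_i\varpi(t_i,t_{i+1})^{3/p-1}\,\varpi(s,t)\to0$), so $Pw\equiv Pw_0=0$, which is \eqref{eq:rv_int_proj}.

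For (iii): $Q$ and $H$ annihilate divergence-free mean-free fields, so $Q(u_t-u_0)=H(u_t-u_0)=0$, and $H(u\cdot\nabla u)=\int_{\bbT^d}u\cdot\nabla u\,\rmd V=0$ by integration by parts; thus, setting $\nabla q_t:=-Qw_t$ and $h_t:=-Hw_t$ (equivalently, the two expressions displayed before Proposition \ref{prop:pressure_and_constant_recovery}), the decomposition $w_t=Pw_t+Qw_t+Hw_t$ together with $Pw_t=0$ yields $w_t=-\nabla q_t-h_t$, which is \eqref{eq:def_solution_velocity_pressure}. Since $Q\in\clL(H^{m-3};\nabla\mathring H^{m-2})$, $\nabla q_t\in\nabla\mathring H^{m-2}$ determines a unique $q_t\in\mathring H^{m-2}$; moreover $w\in C^{p-\textnormal{var}}([0,T];H^{m-3})$ — it is the sum of $u$ (which lies there by \eqref{eq:def_solution_velocity}), a Lipschitz $H^{m-1}$-valued Bochner integral, and the rough integral from (i) — so boundedness of $Q$, $H$, and of $\nabla^{-1}$ on mean-free functions gives $q\in C^{p-\textnormal{var}}([0,T];\mathring H^{m-2})$ and $h\in C^{p-\textnormal{var}}([0,T];\bbR^d)$, both vanishing at $0$ since $w_0=0$. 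Finally, uniqueness: if $-\nabla\tilde q_t-\tilde h_t=-\nabla q_t-h_t$, applying $Q$ (the identity on gradients, zero on constants) gives $\nabla(\tilde q_t-q_t)=0$, hence $\tilde q_t=q_t$ by mean-freeness, and applying $H$ gives $\tilde h_t=h_t$.
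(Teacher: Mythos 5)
Your proof is correct and follows the paper's strategy in all essentials: the same germ is sewn, the Helmholtz decomposition $I=P+Q+H$ peels off $\nabla q$ and $h$, and the final identity is forced because the residual is an exact path increment of finite $\tfrac{p}{3}$-variation with $\tfrac{p}{3}<1$, hence constant. The one place you genuinely diverge is the sewing estimate, and the ``main obstacle'' you flag does not arise on the paper's route: computing the Chen defect of $\Xi_{st}=\pounds_{\xi_k}^*u_sZ^k_{st}+\pounds_{\xi_k}^*P\pounds_{\xi_l}^*u_s\bbZ^{lk}_{st}$ and grouping $\delta u_{s\theta}-P\pounds_{\xi_l}^*u_sZ^l_{s\theta}=u^{P,\sharp}_{s\theta}$ gives
\begin{equation*}
\delta\Xi_{s\theta t}=-\pounds_{\xi_k}^*u^{P,\sharp}_{s\theta}Z^k_{\theta t}-\pounds_{\xi_k}^*P\pounds_{\xi_l}^*\delta u_{s\theta}\,\bbZ^{lk}_{\theta t},
\end{equation*}
and Theorem \ref{thm:remainder_est} controls $u^{P,\sharp}$ in $H^{m-2}$ with $\tfrac{p}{2}$-variation and $\delta u$ in $H^{m-1}$ with $p$-variation, so one application of $\pounds_{\xi_k}^*$ (resp.\ two) lands in $H^{m-3}$ with variation exponents summing to $\tfrac{3}{p}\ge 1$; no expansion down to $u^{P,\natural}$, and hence no drop to $H^{m-4}$, is needed. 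Your workaround — splitting $\mu=P\mu+Q\mu+H\mu$, reading $\clI(P\mu)$ off Definition \ref{def:solution_velocity}, and exploiting that $Q\pounds_{\xi_k}^*=\nabla(-\Delta)^{-1}\operatorname{div}\pounds_{\xi_k}^*$ loses no derivative on divergence-free arguments — is also valid (and the $P/Q/H$ splitting of the sewn integral is needed anyway to define $q$ and $h$), but stopping the expansion at $u^{P,\sharp}$ is the cleaner way to close the estimate.
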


Using \eqref{eq:covd_Lie} and \eqref{eq:ad_star_Lie}, we find that \eqref{eq:rough_velocity_unprojected} can expressed in terms of the associated co-vector $u^{\flat}$ (see Section \ref{sec:prelim_Lie}):
\begin{equation}\label{eq:rough_one_form}
\rmd u^{\flat}+\pounds_{u}u^{\flat}\,\rmd t +\pounds_{\xi_k}u^{\flat}\, \rmd \bZ_t^k = - \bd(\rmd q_t -2^{-1} |u|^2\rmd t)   -\rmd h^{\flat}_t.
\end{equation}
Let $\omega = \bd u^{\flat}=(\partial_{x^i}u_j-\partial_{x^j}u_i)e^i\wedge e^j$.
By applying the exterior derivative $\bd$ to \eqref{eq:rough_one_form} and using that $\bd \pounds_v=\pounds_v \bd$ we arrive at the vorticity formulation: 
\begin{equation}\label{eq:vorticity_two_form_2}
\begin{cases}
\rmd \omega +\pounds_{u}\omega\,\rmd t + 
\pounds_{\xi_k}\omega\,\rmd \bZ_t^k = 0\quad \textnormal{on}\;\; (0,T]\times \bbT^d ,\\
u=\operatorname{BS}\omega\quad \textnormal{on}\;\; (0,T]\times \bbT^d,\\
\omega=\bd u_{0}^{\flat} \quad \textnormal{on} \;\;\{0\}\times \bbT^d,
\end{cases}
\end{equation}
where  for a divergence-free vector field $v$ we have (see Section \ref{sec:prelim_Lie})
\begin{align}
\pounds_v \omega&=\sum_{i<j}(v^{q}\partial_{x^{q}}\omega_{ij}+ \omega_{qj}\partial_{x^{i}}v^q +\omega_{iq}\partial_{x^{j}}v^q  )  e^i\wedge e^j\notag\\
&=\sum_{i<j}\left(\partial_{x^i}(v^q\omega_{qj})-\partial_{x^j}(v^q\omega_{qi})\right)e^i\wedge e^j=\bd [\omega(v,\cdot)].\label{eq:CartanLie}
\end{align}

Recall that  (see Section \ref{sec:prelim_Hodge}) for given $s\in \bbR$, $\mathring{H}^s_{\bd}:=\{\omega\in \mathring{H}^{s}(\bbT^d;\Lambda^{2}(\bbR^d)^*): \bd \omega =0\}$ and that $\operatorname{BS}:\mathring{H}^{m-1}_{\bd}\rightarrow \mathring{H}_{\sigma}^{m}$ denotes the inverse of $\bd \flat$. The Cartan formulation \eqref{eq:CartanLie} of the Lie derivative $\pounds_v\omega=\bd[\omega(v,\cdot)]$ implies that the dynamics preserve the property that $\bd \omega=0$ and $\hat{\omega}(0)=0$, and thus no Lagrange multipliers are needed to enforce these constraints. In contrast, the velocity equation requires the pressure and the harmonic constant to enforce the divergence-free constraint.

We summarize the equivalence between the velocity and vorticity formulation in the following proposition. The direct implication is a simple consequence of \eqref{eq:covd_Lie} and \eqref{eq:ad_star_Lie} and the converse follows from the properties of the Biot-Savart operator presented in Section \ref{sec:prelim_Hodge}.
\begin{proposition}[Vorticity formulation]\label{prop:vorticity}
If  $u$  is a $H^{m}$-solution of  \eqref{eq:rough_velocity_projected} on the interval $[0,T]$, then $\omega=\bd u^{\flat}:[0,T]\rightarrow \mathring{H}^{m-1}_{\bd}$ is bounded 
and 
\begin{equation}\label{eq:def_solution_vorticity}
\omega^{\natural}_{st}:=\delta \omega_{st}+ \int_s^t\pounds_{u_r}\omega_r\,\rmd r+  \pounds_{\xi_k}\omega_sZ^k_{st} -\pounds_{\xi_k}\pounds_{\xi_l}\omega_s\mathbb{Z}^{lk}_{st}, \;\;(s,t)\in \Delta_T,
\end{equation}
satisfies 
$\omega^{\natural}\in C^{\frac{p}{3}-\textnormal{var}}_{2,\textnormal{loc}}([0,T];\mathring{H}^{m-4}_{\bd})$. Moreover, $\omega^{\natural}=\bd \flat  u^{P,\natural}$.
Conversely, if $\omega$ and $\omega^{\natural}$ belong to the aforementioned spaces and satisfy \eqref{eq:def_solution_vorticity} with $\omega_0:=\bd u_0^{\flat}$ and $u:=\operatorname{BS}\omega$, then $u$ is $H^{m}$-solution of  \eqref{eq:rough_velocity_projected} and $u^{P,\natural}= \operatorname{BS}\omega^{\natural}$.
\end{proposition}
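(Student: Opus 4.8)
The plan is to transfer the rough-path structure between the velocity and vorticity formulations using the intertwining relation $-\bd\flat\,\pounds_{v}^{*}w=\pounds_{v}\,\bd w^{\flat}$ from \eqref{eq:ad_star_Lie} together with the mapping properties of $\operatorname{BS}$ and $\bd\flat$. First I would establish the forward direction. Suppose $u$ is an $H^{m}$-solution of \eqref{eq:rough_velocity_projected}. Since $\bd\flat:\mathring{H}^{m}_{\sigma}\to\mathring{H}^{m-1}_{\bd}$ is bounded (indeed an isomorphism with inverse $\operatorname{BS}$, by Section \ref{sec:prelim_Hodge} and \eqref{ineq:curl_equiv}), the path $\omega:=\bd u^{\flat}$ is a bounded path into $\mathring{H}^{m-1}_{\bd}$. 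Now apply the \emph{bounded} linear operator $\bd\flat$ to the defining relation \eqref{eq:def_solution_velocity}. Using \eqref{eq:covd_Lie} and \eqref{eq:ad_star_Lie}, one has $\bd\flat\, P(u\cdot\nabla u)=\bd\flat(u\cdot\nabla u)=\bd(\pounds_{u}u^{\flat})-\bd\nabla(2^{-1}|u|^{2})$ and the last term vanishes since $\bd^{2}=0$, while $\bd\flat(u\cdot\nabla u)=\pounds_{u}\bd u^{\flat}=\pounds_{u}\omega$ by $\bd\pounds_{u}=\pounds_{u}\bd$; similarly $\bd\flat\, P\pounds_{\xi_{k}}^{*}u=-\pounds_{\xi_{k}}\bd u^{\flat}=-\pounds_{\xi_{k}}\omega$ (the projection $P$ is invisible to $\bd\flat$ because $\bd\flat Q=0$ and $\bd\flat H=0$). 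For the iterated term $P\pounds_{\xi_{k}}^{*}P\pounds_{\xi_{l}}^{*}u$, I would apply \eqref{eq:ad_star_Lie} twice: $\bd\flat\, P\pounds_{\xi_{k}}^{*}(P\pounds_{\xi_{l}}^{*}u)=-\pounds_{\xi_{k}}\bd\flat(P\pounds_{\xi_{l}}^{*}u)=-\pounds_{\xi_{k}}(-\pounds_{\xi_{l}}\bd u^{\flat})=\pounds_{\xi_{k}}\pounds_{\xi_{l}}\omega$, where one must check that $\pounds_{\xi_{l}}^{*}u\in H^{m-1}$ (from $\xi\in(W^{m+2,\infty}_{\sigma})^{K}$ and \eqref{ineq:alg_sob_1}) so that the second application of \eqref{eq:ad_star_Lie} is legitimate and $\sharp P\pounds_{\xi_{l}}^{*}u$ has the correct flat/sharp bookkeeping. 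Matching term by term against \eqref{eq:def_solution_vorticity} yields $\omega^{\natural}_{st}=\bd\flat\, u^{P,\natural}_{st}$, and since $\bd\flat:\mathring{H}^{m-3}_{\sigma}\to\mathring{H}^{m-4}_{\bd}$ is bounded, $u^{P,\natural}\in C^{p/3-\mathrm{var}}_{2,\mathrm{loc}}([0,T];\mathring{H}^{m-3}_{\sigma})$ forces $\omega^{\natural}\in C^{p/3-\mathrm{var}}_{2,\mathrm{loc}}([0,T];\mathring{H}^{m-4}_{\bd})$.

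For the converse, suppose $\omega:[0,T]\to\mathring{H}^{m-1}_{\bd}$ is bounded, $\omega_{0}=\bd u_{0}^{\flat}$, $\omega^{\natural}$ lies in the stated space, and \eqref{eq:def_solution_vorticity} holds; set $u:=\operatorname{BS}\omega$. By \eqref{ineq:curl_equiv}, $u:[0,T]\to\mathring{H}^{m}_{\sigma}$ is bounded and $\bd u^{\flat}=\omega$. Now apply $\operatorname{BS}=\sharp(-\Delta)^{-1}\delta$ to \eqref{eq:def_solution_vorticity}, or equivalently observe that $\bd\flat$ is injective on $\mathring{H}^{\bullet}_{\sigma}$ with bounded inverse, so it suffices to show $\bd\flat\,u^{P,\natural}_{st}=\omega^{\natural}_{st}$ where $u^{P,\natural}$ is \emph{defined} by \eqref{eq:def_solution_velocity} with this $u$. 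This is exactly the term-by-term computation from the forward direction, read in reverse: each term of $\bd\flat\,u^{P,\natural}$ equals the corresponding term of $\omega^{\natural}$ by \eqref{eq:covd_Lie}, \eqref{eq:ad_star_Lie}, $\bd\pounds_{v}=\pounds_{v}\bd$, and $\bd^{2}=0$. Hence $\bd\flat\,u^{P,\natural}=\omega^{\natural}\in C^{p/3-\mathrm{var}}_{2,\mathrm{loc}}([0,T];\mathring{H}^{m-4}_{\bd})$, and applying the bounded inverse $\operatorname{BS}$ gives $u^{P,\natural}=\operatorname{BS}\omega^{\natural}\in C^{p/3-\mathrm{var}}_{2,\mathrm{loc}}([0,T];\mathring{H}^{m-3}_{\sigma})$, so $u$ is an $H^{m}$-solution of \eqref{eq:rough_velocity_projected}. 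The identity $u^{P,\natural}=\operatorname{BS}\omega^{\natural}$ (equivalently $\omega^{\natural}=\bd\flat\,u^{P,\natural}$) is recorded in both directions.

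I expect the only real subtlety to be the bookkeeping for the \emph{iterated} (second-order) rough term: one must verify that $\pounds_{\xi_{l}}^{*}u$ is regular enough (at least $H^{m-1}$, using the regularity of $\xi$) that \eqref{eq:ad_star_Lie} may be applied a second time, and keep careful track of the flat/sharp maps and of the projection $P$ — the key point being that $\bd\flat\circ P=\bd\flat$ on $H^{m}$ because $\bd\flat$ annihilates the gradient part $Q$ and the harmonic part $H$ of the Helmholtz decomposition, which is precisely why the projections in \eqref{eq:def_solution_velocity} disappear when passing to \eqref{eq:def_solution_vorticity}. Everything else is a direct application of the boundedness of $\bd\flat$ and $\operatorname{BS}$ between the relevant Sobolev scales (Section \ref{sec:prelim_Hodge}, \eqref{ineq:curl_equiv}) to transport the $C^{p/3-\mathrm{var}}_{2,\mathrm{loc}}$ regularity of the remainder across the isomorphism.
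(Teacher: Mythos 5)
Your proposal is correct and follows exactly the route the paper itself indicates in its one-sentence proof sketch: apply $\bd\flat$ to \eqref{eq:def_solution_velocity} and use \eqref{eq:covd_Lie}, \eqref{eq:ad_star_Lie}, $\bd\pounds_v=\pounds_v\bd$, $\bd^2=0$, and $\bd\flat\circ P=\bd\flat$ to match terms with \eqref{eq:def_solution_vorticity}, with the converse running through the boundedness of $\operatorname{BS}$ from \eqref{ineq:curl_equiv}. Your flagged subtleties (the regularity of $\pounds_{\xi_l}^*u$ for the iterated term, and the invisibility of $P$ under $\bd\flat$) are the right ones to check, and your write-up resolves them correctly.
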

\begin{remark}\label{rem:vorticity_scalar_vec}
In dimension two, the vorticity $\omega$ can be identified with a scalar-valued function $\tilde{\omega}=\operatorname{curl} u=\star\omega= \omega_{12}$. Using \eqref{eq:star_commute_scalar}, we find that
\begin{equation}\label{eq:vorticity_2d}
\rmd\tilde{\omega}+u\cdot \nabla \tilde{\omega}\,\rmd t + 
\xi_k\cdot \nabla \tilde{\omega}\,\rmd \bZ_t^k = 0.
\end{equation}
The scalar transport structure then implies that $L^p$-norms of $\tilde{\omega}$ are conserved, which is used to prove global well-posedness (see \eqref{eq:vort_Lp_conserved} in Theorem \ref{thm:global2d}).
In dimension three,  the vorticity $\omega$ can be identified with a vector field $\hat{\omega}=\operatorname{curl} u=\sharp \star \omega=\omega_{23}e_1+\omega_{31}e_2+\omega_{12}e_3$. Applying \eqref{eq:star_commute_vector}, we get
$$
\rmd \hat{\omega}+ [u,\hat{\omega}]\,\rmd t + [\xi_k, \hat{\omega}]\, \rmd \bZ_t^k=0.
$$
\end{remark}
\begin{remark}\label{rem:vorticity_no_projection}
In the vorticity equation, neither Lagrange multipliers, nor projections, appear. The structure of the operators (i.e., \eqref{eq:CartanLie}) imply that the mean-free and exterior-derivative-free condition is preserved by the dynamics. This fact plays an important role in the proof of Theorem \ref{thm:solution_est} in which we obtain a priori solution estimates of \eqref{eq:rough_velocity_projected} using the vorticity formulation. More precisely, the linear hyperbolic symmetric structure of the operator in the rough part of \eqref{eq:vorticity_two_form_2} enables us to develop an `equation' for the `square' $\omega\otimes \omega$. Unlike in the stochastic setting, there is no Burkholder-Davis-Gundy inequality that can be used to estimate the rough integral, and thus we make use of the method of unbounded rough drivers \cite{BaGu15}, and more precisely  Theorem \ref{thm:URDRemEst} to obtain  remainder estimates for the equation and the `squared' equation. It is not clear how to obtain a priori estimates of \eqref{eq:rough_velocity_projected} directly due to the projection operators, or equivalently the presence of the pressure and harmonic constant. As a consequence, altering the structure of the operator appearing in the $d\bZ$-term in \eqref{eq:rough_velocity_unprojected} even in a multiplicative way directly impacts the structure of the  vorticity equation and prevents us from obtaining a priori solution estimates.
\end{remark}

\subsection{Statement of the main results}
Recall that we always work under the assumption  $m\ge m_*:=\lfloor \frac{d}{2}\rfloor + 2$. Our first main results establish local well-posedness and the existence of a maximally extended solution  in $H^m$ of \eqref{eq:rough_velocity_projected}. Recall that $\varpi_{\bZ}$ is the control of $\bZ$, which is defined in \eqref{def:control_of_rough_path}.

\begin{theorem}[Local well-posedness]\label{thm:local_wp} 
Assume that $u_0\in \mathring{H}^m_{\sigma}$ and $\xi \in (W_{\sigma}^{m+2})^K$. For all $T_*>0$ satisfying
\begin{equation}\label{ineq:time_local_ex}
e^{C_1(1+\varpi_{\bZ}(0,T_*))}T_*<|u_{0}|_{H^{m_*}}^{-1}, 
\end{equation}
where $C_1=C_1(p,d, |\xi|_{W^{m_*+2,\infty}})$ depends in an increasing way on $|\xi|_{W^{m_*+2,\infty}}$,
there exists a unique $H^{m}$-solution  $u\in C_w([0,T_*];\mathring{H}_{\sigma}^{m})\cap C^{p-\textnormal{var}}([0,T_*];\mathring{H}^{m-1}_{\sigma})$
of   \eqref{eq:rough_velocity_projected} on the interval $[0,T_*]$ satisfying
\begin{align}\label{ineq:m_star_soln_est}
\sup_{t\le T_*}|u_t|_{H^{m_*}}&\le \frac{e^{C_1(1+\varpi_{\bZ}(0,T_*))}}{|u_{0}|_{H^{m_*}}^{-1}-e^{C_1(1+\varpi_{\bZ}(0,T_*))} T_*}.
\end{align}
Moreover, if $m>m_*$, there is a constant $C_2=C_2(p,d,m, |\xi|_{W^{m+2,\infty}})$ which increases with $|\xi|_{W^{m_*+2,\infty}}$, such that 
\begin{equation}\label{ineq:m_soln_est}
\sup_{t\le T_*}|u_t|_{H^{m}} \le \sqrt{2} \exp\left(C_2 \left(\int_{0}^{T_*}|\nabla u_s|_{L^\infty}\,\rmd s
+\varpi_{\bZ}(0,T_*)\right) \right)|u_{0}|_{H^{m}}.
\end{equation}
If $\xi \in (W_{\sigma}^{m+4,\infty})^K$, then $u\in C([0,T_*];\mathring{H}_{\sigma}^{m})$.
\end{theorem}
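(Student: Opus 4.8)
The plan is to produce the solution by a vanishing-regularization argument carried out in the vorticity formulation of Proposition~\ref{prop:vorticity}---the natural setting, since there neither pressure nor harmonic constant appears and the Lie-transport structure is manifest---and to obtain uniqueness from a stability estimate at low regularity. Since $\bZ$ is geometric, fix canonical lifts $\bZ^n=(Z^n,\mathbb{Z}^n)$ of smooth paths $z^n$ with $\bZ^n\to\bZ$ in the product topology; then $\varpi_{\bZ^n}(0,T_*)\to\varpi_{\bZ}(0,T_*)$, so that after discarding finitely many $n$ the time $T_*$ still satisfies the smallness condition~\eqref{ineq:time_local_ex} for the driver $\bZ^n$. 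For each such $n$, equation~\eqref{eq:rough_velocity_projected} driven by $\bZ^n$ is the classical incompressible Euler equation with a smooth-in-time, divergence-free Lie-transport term; by standard theory (energy estimates on the vorticity together with the Biot--Savart law) it admits a unique $H^m$-solution $u^n$ on a maximal interval, which the base-level a priori bound below forces to contain $[0,T_*]$.

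\textbf{Uniform a priori bounds.} Here I would invoke the estimates of Section~\ref{sec:apriori} (Theorem~\ref{thm:solution_est}) applied to the smooth-driver solutions $u^n$, for which they reduce to classical computations. At the base level: writing the vorticity equation~\eqref{eq:vorticity_two_form_2} together with the equations for $\partial^\alpha\omega^n$, $|\alpha|\le m_*-1$, one forms the equation for the scalar $\sum_{|\alpha|\le m_*-1}|\partial^\alpha\omega^n|^2$, which carries a symmetric linear rough-transport structure modulo commutator terms; applying the remainder estimate Theorem~\ref{thm:remainder_est}, the rough Gr\"onwall lemma of Appendix~\ref{sec:rough_gron}, the embedding $|\nabla u^n|_{L^\infty}\lesssim_{d}|\omega^n|_{H^{m_*-1}}$ (valid since $m_*-1>d/2$), and the Biot--Savart equivalences~\eqref{ineq:curl_equiv}, one reaches a Riccati-type differential inequality whose solution is exactly~\eqref{ineq:m_star_soln_est}, valid on $[0,T_*]$ precisely by~\eqref{ineq:time_local_ex}. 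For $m>m_*$, the same construction with $|\alpha|\le m-1$---treating the already-controlled quantity $\int_0^{T_*}|\nabla u^n_s|_{L^\infty}\,\rmd s$ as a datum---yields the \emph{linear} Gr\"onwall bound~\eqref{ineq:m_soln_est}. All constants depend only on $p,d,m,|\xi|_{W^{m+2,\infty}}$, on $|u_0|_{H^{m_*}}$, $|u_0|_{H^m}$, and on $\varpi_{\bZ^n}$, hence are uniform in $n$.

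\textbf{Compactness, passage to the limit, uniqueness.} From the uniform $H^m$ bound and the defining relation~\eqref{eq:def_solution_velocity}---with $u^{P,\natural,n}$ uniformly bounded in $C_{2}^{\frac{p}{3}-\textnormal{var}}([0,T_*];\mathring{H}^{m-3}_\sigma)$---the family $\{u^n\}$ is equibounded in $L^\infty([0,T_*];\mathring{H}^m_\sigma)$ and equicontinuous in $C([0,T_*];\mathring{H}^{m-3}_\sigma)$; Arzela--Ascoli together with interpolation against the $L^\infty_t H^m$ bound produces a subsequence converging in $C([0,T_*];\mathring{H}^{m-1}_\sigma)$ and weak-$\ast$ in $L^\infty_t H^m$ to a limit $u\in C_w([0,T_*];\mathring{H}^m_\sigma)$. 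The drift converges, $P(u^n\cdot\nabla u^n)\to P(u\cdot\nabla u)$ in $C([0,T_*];\mathring{H}^{m-2}_\sigma)$, because $H^{m-1}$ is a Banach algebra; for the rough term one checks that the germ $(s,t)\mapsto P\pounds_{\xi_k}^*u^n_sZ^{n,k}_{st}+P\pounds_{\xi_k}^*P\pounds_{\xi_l}^*u^n_s\mathbb{Z}^{n,lk}_{st}$ obeys a sewing estimate (Lemma~\ref{lem:sewing}) uniform in $n$ and converges---using $\bZ^n\to\bZ$ and the convergence of $u^n$---so that its sewing, the rough integral, converges. Hence $u$ satisfies~\eqref{eq:def_solution_velocity}, is an $H^m$-solution, and the membership $u\in C^{p-\textnormal{var}}([0,T_*];\mathring{H}^{m-1}_\sigma)$ is read off from this decomposition together with the remainder bound. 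For uniqueness, two $H^m$-solutions $u^1,u^2$ have difference $v=u^1-u^2$ solving $\rmd v+P(v\cdot\nabla u^1)\,\rmd t+P(u^2\cdot\nabla v)\,\rmd t-P\pounds_{\xi_k}^*v\,\rmd\bZ^k_t=0$; since $\pounds_{\xi_k}^*$ again carries the symmetric transport structure, applying the remainder estimate and rough Gr\"onwall at the $L^2$ level---using $(v,u^2\cdot\nabla v)_{L^2}=0$ and $|\nabla u^1|_{L^\infty}\in L^1_t$---forces $v\equiv0$. Finally, if $\xi\in(W^{m+4,\infty}_\sigma)^K$, the two extra derivatives let the commutator terms in the squared system be estimated sharply enough that its remainder \emph{inequality} becomes an \emph{equality} for $t\mapsto|u_t|^2_{H^m}$; continuity of this map together with weak continuity then upgrades $u$ to $C([0,T_*];\mathring{H}^m_\sigma)$.

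\textbf{Main obstacle.} The crux is the base-level a priori estimate: organizing the equations for $\partial^\alpha\omega$, $|\alpha|\le m_*-1$, into a genuinely symmetric linear rough-transport system so that Theorem~\ref{thm:remainder_est} applies; bounding every commutator $[\partial^\alpha,\pounds_{\xi_k}]$ and $[\partial^\alpha,\pounds_{\xi_k}\pounds_{\xi_l}]$ by $|\xi|_{W^{m_*+2,\infty}}$ and $|\omega|_{H^{m_*-1}}$; and running the rough Gr\"onwall argument so that it closes as a Riccati inequality yielding~\eqref{ineq:m_star_soln_est} with the sharp time threshold~\eqref{ineq:time_local_ex}. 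A second delicate point, needed in the passage to the limit, is the uniform-in-$n$ stability of the rough integral under simultaneous perturbation of the integrand $u^n$ and the driver $\bZ^n$. Both of these are precisely what Section~\ref{sec:apriori} and the sewing lemma are designed to deliver, so in the proof proper they are invoked rather than re-derived.
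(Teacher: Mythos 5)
Your overall architecture — smooth-driver approximation, uniform a priori bounds via the vorticity formulation and Theorem~\ref{thm:solution_est}, compactness, and uniqueness from a low-norm difference estimate — matches the paper's. But there is a real gap in how you set up the approximation.

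\textbf{Gap 1: which data you smooth.} You approximate only $\bZ$ by canonical lifts $\bZ^n$ of smooth paths, keeping $u_0\in\mathring{H}^m_\sigma$ and $\xi\in(W^{m+2,\infty}_\sigma)^K$ fixed. This leaves the approximating solutions $u^n$ as mere $H^m$-solutions. But Theorem~\ref{thm:solution_est} is stated for an $H^m$-solution and delivers bounds only in norms $H^{m'}$ with $1\le m'\le m-2$: it controls $\sup_t|u^n_t|_{H^{m-2}}$, not $\sup_t|u^n_t|_{H^m}$. When you write ``the same construction with $|\alpha|\le m-1$ yields~\eqref{ineq:m_soln_est},'' that is exactly the level $m'=m$, which is \emph{not} covered if $u^n$ is only an $H^m$-solution. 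You also cannot replace the invocation of Theorem~\ref{thm:solution_est} by ``classical computations'' for the smooth driver, because those would produce constants depending on $\int_0^{T_*}|\dot z^n_s|\,\rmd s$, which blows up as $n\to\infty$; the whole point is that the rough-driver framework replaces this by $\varpi_{\bZ^n}$, which stays bounded. The paper resolves this by approximating \emph{all three} data $(u_0,\bZ,\xi)$ by $(u_0^n,\bZ^n,\xi^n)\in\mathring{C}^\infty_\sigma\times C_g^{1-\textnormal{var}}\times(C^\infty_\sigma)^K$ (with the bounds~\eqref{ineq:data_bounds} preserved), so that $u^n$ is an $H^{m+2}$-solution and Theorem~\ref{thm:solution_est} can be used at $m'=m_*$ and $m'=m$, giving~\eqref{ineq:mstar_seq_est} and~\eqref{ineq:m_seq_est} uniformly in $n$. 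You need to smooth $u_0$ and $\xi$, not just $\bZ$.

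\textbf{Gap 2: the continuity upgrade for $\xi\in(W^{m+4,\infty}_\sigma)^K$.} Your claim that ``the remainder inequality becomes an equality for $t\mapsto|u_t|^2_{H^m}$'' is not what happens and is not substantiated. What the paper actually does is approximate the initial condition by $u_0^N=P_{\le 2^N}u_0$, obtain a sequence of $H^{m+2}$-solutions $u^N$ (now possible since $\xi\in W^{m+4,\infty}$), and then apply the \emph{difference estimate} \eqref{ineq:diff_est_highest} in Theorem~\ref{thm:diff_est} at the highest norm $H^m$ (valid precisely because $m\le(m+2)-2$). The cancellations $2^{-N}|u_0^N|_{H^{m+1}}\le|u_0|_{H^m}$ and $2^N|u_0^N-u_0|_{H^{m-1}}\to0$ then give $u^N\to u$ in $C([0,T_*];\mathring{H}^m_\sigma)$, from which $u\in C([0,T_*];\mathring{H}^m_\sigma)$. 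This is a concrete approximation argument via the difference estimate, not an energy-equality argument, and the $W^{m+4,\infty}$ hypothesis enters solely to make the auxiliary solutions two levels more regular.

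Otherwise the proposal is on the right track: the passage to the limit, the weak-continuity claim, and the $L^2$-level uniqueness (i.e., $H^1$ for $u$, $L^2$ for $\omega$, which is \eqref{ineq:diff_est_H1}) all agree with the paper.
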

\begin{corollary}[Maximally extended solution]\label{cor:maximal_solution}
Assume that $u_0\in \mathring{H}^m_{\sigma}$ and $\xi \in (W_{\sigma}^{m+2})^K$. Then there exists a unique maximally extended $H^m$-solution $u\in C_w([0,T_{\textnormal{max}});\mathring{H}_{\sigma}^{m})\cap C^{p-\textnormal{var}}([0,T_{\textnormal{max}});\mathring{H}^{m-1}_{\sigma})$ of \eqref{eq:rough_velocity_projected} on the interval $[0,T_{\textnormal{max}})$. The time $T_{\textnormal{max}}$ is uniquely specified by the property if $T_{\textnormal{max}}<\infty$, then $\limsup_{t\uparrow T_{\textnormal{max}}} |u_t|_{H^{m_*}}=\infty$. If $\xi \in (W_{\sigma}^{m+4, \infty})^K$, then $u\in C([0,T_{\textnormal{max}});\mathring{H}_{\sigma}^{m})$.
\end{corollary}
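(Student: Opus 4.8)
The plan is to obtain Corollary~\ref{cor:maximal_solution} from Theorem~\ref{thm:local_wp} by the usual continuation argument, the decisive structural point being that the local existence time furnished by \eqref{ineq:time_local_ex} depends on the initial datum only through its $H^{m_*}$-norm (and on the driver only through the control $\varpi_{\bZ}$, and on the coefficients only through $|\xi|_{W^{m_*+2,\infty}}$). First I would let $T_{\textnormal{max}}\in(0,\infty]$ be the supremum of all $T>0$ for which an $H^m$-solution of \eqref{eq:rough_velocity_projected} exists on $[0,T]$; by Theorem~\ref{thm:local_wp} this supremum is positive. Uniqueness of $H^m$-solutions --- the uniqueness assertion of Theorem~\ref{thm:local_wp}, which transfers from short subintervals (on which \eqref{ineq:time_local_ex} is automatically met) to arbitrary overlapping intervals by a standard covering/connectedness argument --- forces any two $H^m$-solutions issuing from $u_0$ to agree on the overlap of their domains, so the solutions on $[0,T]$, $T<T_{\textnormal{max}}$, glue into a single $H^m$-solution $u$ on $[0,T_{\textnormal{max}})$; by the restart argument below, $T_{\textnormal{max}}$ is not attained when finite, so $u$ genuinely lives on the half-open interval. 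Restricting $u$ to $[0,T']$ for each $T'<T_{\textnormal{max}}$ and quoting the regularity conclusions of Theorem~\ref{thm:local_wp} then gives $u\in C_w([0,T_{\textnormal{max}});\mathring{H}^m_{\sigma})\cap C^{p-\textnormal{var}}([0,T_{\textnormal{max}});\mathring{H}^{m-1}_{\sigma})$, and $u\in C([0,T_{\textnormal{max}});\mathring{H}^m_{\sigma})$ when $\xi\in(W^{m+4,\infty}_{\sigma})^K$.

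\emph{The blow-up criterion.} Suppose, towards a contradiction, that $T_{\textnormal{max}}<\infty$ while $M:=\limsup_{t\uparrow T_{\textnormal{max}}}|u_t|_{H^{m_*}}<\infty$. Then there is $\delta\in(0,T_{\textnormal{max}})$ with $|u_t|_{H^{m_*}}\le M+1$ for $t\in(T_{\textnormal{max}}-\delta,T_{\textnormal{max}})$, while $t\mapsto|u_t|_{H^{m_*}}$ is bounded on the compact interval $[0,T_{\textnormal{max}}-\delta]$ (an $H^m$-solution being, by definition, bounded in $H^m$, and $|\cdot|_{H^{m_*}}\le|\cdot|_{H^m}$); set $M':=\max\{1,\sup_{0\le t<T_{\textnormal{max}}}|u_t|_{H^{m_*}}\}<\infty$. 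Since the finite control $\varpi_{\bZ}$ is continuous, hence uniformly continuous on the compact set $\Delta_{[0,T_{\textnormal{max}}+1]}$, and vanishes on the diagonal, there is $\tau_0\in(0,1]$ such that $\varpi_{\bZ}(s,s+\tau)\le 1$ whenever $\tau\le\tau_0$ and $0\le s\le s+\tau\le T_{\textnormal{max}}+1$. Put $\bar\tau:=\min\{\tau_0,\ \tfrac{1}{2}e^{-2C_1}(M')^{-1}\}>0$, with $C_1=C_1(p,d,|\xi|_{W^{m_*+2,\infty}})$ the constant of Theorem~\ref{thm:local_wp}. For any $t_0\in(T_{\textnormal{max}}-\bar\tau,T_{\textnormal{max}})$ one has $u_{t_0}\in\mathring{H}^m_{\sigma}$, $|u_{t_0}|_{H^{m_*}}\le M'$, and
$$
e^{C_1(1+\varpi_{\bZ}(t_0,t_0+\bar\tau))}\,\bar\tau\ \le\ e^{2C_1}\bar\tau\ \le\ \tfrac{1}{2}(M')^{-1}\ <\ |u_{t_0}|_{H^{m_*}}^{-1},
$$
so Theorem~\ref{thm:local_wp}, applied (after translating time by $t_0$) to the initial-value problem started from $u_{t_0}$ at time $t_0$, produces an $H^m$-solution on $[t_0,t_0+\bar\tau]$; by uniqueness it coincides with $u$ on $[t_0,T_{\textnormal{max}})$, so concatenation yields an $H^m$-solution on $[0,t_0+\bar\tau]$. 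Choosing $t_0>T_{\textnormal{max}}-\bar\tau$ makes $t_0+\bar\tau>T_{\textnormal{max}}$, contradicting the definition of $T_{\textnormal{max}}$. Hence $T_{\textnormal{max}}<\infty$ forces $\limsup_{t\uparrow T_{\textnormal{max}}}|u_t|_{H^{m_*}}=\infty$. (I would also note, as a byproduct, that by \eqref{ineq:m_soln_est} together with $|\nabla u_s|_{L^\infty}\lesssim_{d,m_*}|u_s|_{H^{m_*}}$, the full $H^m$-norm stays bounded on $[0,T_{\textnormal{max}})$ whenever the $H^{m_*}$-norm does.)

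\emph{Uniqueness of $T_{\textnormal{max}}$, and the main difficulty.} If $\widetilde T\in(0,\infty]$ also carries an $H^m$-solution $\widetilde u$ on $[0,\widetilde T)$ enjoying the stated blow-up property, then by uniqueness $\widetilde u=u$ on $[0,\min(\widetilde T,T_{\textnormal{max}}))$; one cannot have $\widetilde T<T_{\textnormal{max}}$, since then $u$ --- hence $\widetilde u$ --- would be $H^{m_*}$-bounded near the finite time $\widetilde T$, contradicting $\limsup_{t\uparrow\widetilde T}|\widetilde u_t|_{H^{m_*}}=\infty$; nor $\widetilde T>T_{\textnormal{max}}$, by maximality of $T_{\textnormal{max}}$; hence $\widetilde T=T_{\textnormal{max}}$ and $\widetilde u=u$. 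I expect the one genuinely delicate point to be the continuation step, namely securing a uniform positive lower bound $\bar\tau$ on the local existence time as the restart time $t_0\uparrow T_{\textnormal{max}}$: this is precisely where it matters that \eqref{ineq:time_local_ex} constrains the initial datum only through $|u_{t_0}|_{H^{m_*}}$, used together with the uniform continuity of $\varpi_{\bZ}$ near the diagonal. The remaining items --- gluing of solutions, and propagation of the various local (``$\textnormal{loc}$''-type) regularity properties --- are routine.
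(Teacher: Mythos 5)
Your argument is correct, and in broad strokes it rests on the same structural observation the paper exploits: the local existence time from Theorem~\ref{thm:local_wp} is controlled solely through $|u_{t_0}|_{H^{m_*}}$, $\varpi_{\bZ}$, and $|\xi|_{W^{m_*+2,\infty}}$, so a uniform $H^{m_*}$-bound up to a putative finite $T_{\textnormal{max}}$ yields a uniform restart time and a contradiction. The route, however, is genuinely different in presentation. You take the ``abstract'' approach: define $T_{\textnormal{max}}$ as the supremum of times on which an $H^m$-solution exists, glue solutions via uniqueness (the global difference estimate \eqref{ineq:diff_est_H1} of Theorem~\ref{thm:diff_est} actually gives this directly for any finite $T$, so the covering/connectedness remark is more machinery than is needed), and derive the blow-up dichotomy by the uniform-restart contradiction, invoking uniform continuity of the control $\varpi_{\bZ}$ near the diagonal to produce a single $\bar\tau>0$. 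The paper instead gives a \emph{constructive} proof: it inverts the map $\delta\mapsto e^{\bar C(1+\varpi_{\bZ}(s,s+\delta))}\delta$ in its second argument to build an explicit increasing sequence of times $T_{l+1}=T_l+\delta_*\bigl(T_l,(R+|u_{T_l}|_{H^{m_*}})^{-1}\bigr)$, sets $T_{\textnormal{max}}=\sup_l T_l$, and obtains the blow-up criterion by showing that $T_{l+1}-T_l\to 0$ is incompatible with a uniform $H^{m_*}$-bound. The two arguments buy the same corollary, but the paper's explicit sequence $\{T_l\}$ is deliberately constructed for reuse: it is quoted verbatim in the proofs of the BKM criterion (Theorem~\ref{thm:BKM}) and continuous dependence (Corollary~\ref{cor:stability}), where one needs to track approximating sequences of solutions interval-by-interval along the same time partition with uniform bounds on each piece. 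Your version is leaner as a standalone proof of Corollary~\ref{cor:maximal_solution}, but were you to prove Theorems~\ref{thm:BKM} and Corollary~\ref{cor:stability} afterwards you would likely find yourself re-introducing essentially the paper's sequence of restart times anyway. Two small remarks: your parenthetical byproduct about the $H^m$-norm staying bounded via \eqref{ineq:m_soln_est} is stated there only for $m>m_*$, so for $m=m_*$ one should instead invoke \eqref{ineq:m_star_soln_est} or \eqref{ineq:BKM_apriori}; and the uniqueness-of-$T_{\textnormal{max}}$ paragraph is an unobjectionable addition that the paper handles implicitly via the ``no proper extension'' contradiction at the end of its proof.
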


The next theorem extends the Beale-Kato-Majda (BKM) blow-up criterion \cite{beale1984remarks}.

\begin{theorem}[BKM blow-up criterion]\label{thm:BKM}
Assume that $u_0\in \mathring{H}^m_{\sigma}$ and  $\xi \in (W_{\sigma}^{m_*+4,\infty})^K$ if $m=m_*$ and $\xi \in (W_{\sigma}^{m+2})^K$ if $m>m_*$. Let $u$ denote the maximally extended $H^{m}$-solution of \eqref{eq:rough_velocity_projected} and  $\omega =\bd u^{\flat}$ denote its vorticity. Then there are constants  $C_1=C_1(d,m)$ and $C_2=C_2(p,d,m, |\xi|_{W^{m+2,\infty}})$ depending in an increasing way on $|\xi|_{W^{m+2,\infty}}$ such that for all $T<T_{\textnormal{max}}$,
\begin{equation}\label{ineq:BKM_blowup_bound}
\sup_{t\le T}|u_t|_{H^{m}} \le C_1 (1+|u_{0}|_{H^{m}})\exp\left(C_2(1+\varpi_{\bZ}(0,T))\exp \left(C_2 \int_{0}^T|\omega_s|_{L^\infty}\,\rmd s\right)\right).
\end{equation}
Moreover,  $T_{\textnormal{max}}<\infty$ if and only if  $\int_0^{T_{\textnormal{max}}} |\omega_t|_{L^\infty} \,\rmd t =\infty$. 
\end{theorem}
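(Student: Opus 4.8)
The plan is to insert Kato's logarithmic inequality \eqref{ineq:Kato} into the a priori $H^m$‑bound and then close the resulting self‑referential estimate by a logarithmic Grönwall argument; the dichotomy for $T_{\textnormal{max}}$ will then follow from \eqref{ineq:BKM_blowup_bound} together with Corollary \ref{cor:maximal_solution}.

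First I fix $T<T_{\textnormal{max}}$ and let $u$ be the maximal solution with vorticity $\omega=\bd u^{\flat}$ (Proposition \ref{prop:vorticity}). The a priori estimate underlying \eqref{ineq:m_soln_est} — obtained in Section \ref{sec:apriori} (Theorem \ref{thm:solution_est}) by applying the remainder estimate Theorem \ref{thm:remainder_est} and a rough Grönwall lemma (Section \ref{sec:rough_gron}) to the squared vorticity system — is in fact valid on all of $[0,T]$: the smallness condition \eqref{ineq:time_local_ex} of Theorem \ref{thm:local_wp} is used only to produce the solution by a contraction argument, not for the estimate, and the rough Grönwall lemma tolerates an interval of arbitrary length at the cost of the term $\varpi_{\bZ}(0,T)$ in the exponent (the interval being partitioned according to $\varpi_{\bZ}$). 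Hence $\sup_{t\le T}|u_t|_{H^m}\le C|u_0|_{H^m}\exp\!\big(C\int_0^T|\nabla u_s|_{L^\infty}\,\rmd s+C\varpi_{\bZ}(0,T)\big)$ with $C=C(p,d,m,|\xi|_{W^{m+2,\infty}})$. For $m>m_*$ this is exactly \eqref{ineq:m_soln_est}; for the borderline case $m=m_*$ one uses instead the $m_*$‑level a priori estimate, closed via Kato rather than via the crude bound $|\nabla u|_{L^\infty}\lesssim|u|_{H^{m_*}}$ that produces the Riccati‑type \eqref{ineq:m_star_soln_est}, and it is this step (running the rough‑driver estimates at the threshold regularity) that forces the stronger hypothesis $\xi\in(W^{m_*+4,\infty}_\sigma)^K$ there. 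Since $m_*>\tfrac d2+1$, Kato's inequality \eqref{ineq:Kato} at regularity $s=m_*$ together with the norm equivalence \eqref{ineq:curl_equiv} gives, for every $s\le T$,
\[
|\nabla u_s|_{L^\infty}=|\nabla\operatorname{BS}\omega_s|_{L^\infty}\ \lesssim_{d}\ \ln\!\big(e+|\omega_s|_{H^{m_*-1}}\big)\,|\omega_s|_{L^\infty}\ \lesssim_{d}\ \ln\!\big(e+|u_s|_{H^m}\big)\,|\omega_s|_{L^\infty}.
\]

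Writing $N(t):=\sup_{s\le t}|u_s|_{H^m}$, which is nondecreasing and finite on $[0,T]$, and combining the two displays yields $N(t)\le C|u_0|_{H^m}\exp\!\big(C\int_0^t\ln(e+N(s))\,|\omega_s|_{L^\infty}\,\rmd s+C\varpi_{\bZ}(0,t)\big)$. Taking logarithms and using $\ln(e+ab)\le\ln(e+a)+\ln(e+b)$ to absorb the prefactors, the quantity $\nu(t):=\ln(e+N(t))$ satisfies $\nu(t)\le C\big(1+\ln(e+|u_0|_{H^m})+\varpi_{\bZ}(0,t)\big)+C\int_0^t\nu(s)\,|\omega_s|_{L^\infty}\,\rmd s$. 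The forcing term is nondecreasing in $t$, and $s\mapsto|\omega_s|_{L^\infty}$ is locally integrable on $[0,T]$ because $\omega\in C_w([0,T];H^{m-1})$ and $H^{m-1}\hookrightarrow L^\infty$; so the classical Grönwall lemma gives
\[
\nu(t)\ \le\ C\big(1+\ln(e+|u_0|_{H^m})+\varpi_{\bZ}(0,t)\big)\exp\!\Big(C\int_0^t|\omega_s|_{L^\infty}\,\rmd s\Big).
\]
Exponentiating this and rearranging (using $e^{a+b}\le e^ae^b$, $a\,e^b\le(1+a)e^b$, and absorbing the linear factor $\int_0^t|\omega_s|_{L^\infty}\,\rmd s$ into an exponential of itself) produces the bound \eqref{ineq:BKM_blowup_bound}.

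Finally, for the blow‑up characterization: if $T_{\textnormal{max}}<\infty$ then $\varpi_{\bZ}(0,T_{\textnormal{max}})<\infty$ since $\bZ$ is a rough path on all of $\bbR_+$, so were $\int_0^{T_{\textnormal{max}}}|\omega_t|_{L^\infty}\,\rmd t<\infty$, letting $T\uparrow T_{\textnormal{max}}$ in \eqref{ineq:BKM_blowup_bound} would give $\sup_{t<T_{\textnormal{max}}}|u_t|_{H^{m}}<\infty$, hence $\sup_{t<T_{\textnormal{max}}}|u_t|_{H^{m_*}}<\infty$ because $m\ge m_*$, contradicting $\limsup_{t\uparrow T_{\textnormal{max}}}|u_t|_{H^{m_*}}=\infty$ from Corollary \ref{cor:maximal_solution}; therefore $\int_0^{T_{\textnormal{max}}}|\omega_t|_{L^\infty}\,\rmd t=\infty$, and the reverse implication is immediate since $t\mapsto|\omega_t|_{L^\infty}$ is integrable on every compact subinterval of $[0,T_{\textnormal{max}})$. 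The main obstacle is the self‑referential nature of the closing step: Kato's inequality controls $|\nabla u|_{L^\infty}$ by $\ln(e+|u|_{H^m})|\omega|_{L^\infty}$, so the $H^m$‑norm one wishes to bound reappears inside its own estimate; this is resolved precisely because $|\omega|_{L^\infty}$ sits outside the logarithm, which makes the logarithmic Grönwall available. The secondary points are the propagation of the rough a priori bound from the short local‑existence interval of Theorem \ref{thm:local_wp} to the whole interval $[0,T_{\textnormal{max}})$, and the borderline regularity case $m=m_*$.
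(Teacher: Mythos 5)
Your central analytic mechanism --- feeding Kato's inequality \eqref{ineq:Kato} into the exponential bound \eqref{ineq:weak_apriori} and closing with a logarithmic Gr\"onwall argument for $\nu(t)=\ln(e+|u_t|_{H^m})$ --- is exactly how the paper derives \eqref{ineq:BKM_apriori} inside Theorem \ref{thm:solution_est}, and your treatment of the blow-up dichotomy matches the paper's. But there is a genuine gap in how you get from that a priori estimate to the stated theorem.

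Theorem \ref{thm:solution_est} yields \eqref{ineq:weak_apriori} and \eqref{ineq:BKM_apriori} only for $m'\le m-2$: the proof tests the squared-vorticity system $(\partial^I\omega\otimes\partial^J\omega)_{|I|,|J|\le m'-1}$ against $\boldsymbol{I}$, which requires $\Omega^{\natural}=(\partial^I\omega^{\natural})$ to land in $H^{-1}$, i.e.\ $m-4-(m'-1)\ge -1$. So for the given $H^m$-solution $u$ you may not invoke the estimate at the top level $m'=m$, which is what \eqref{ineq:BKM_blowup_bound} requires. Your assertion that the estimate ``is in fact valid on all of $[0,T]$'' addresses the length of the time interval (which is not the issue --- Theorem \ref{thm:solution_est} is already stated on an arbitrary interval), not the regularity gap. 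The paper closes this gap by constructing, on each $[0,T_l]$ with $T_l\uparrow T_{\textnormal{max}}$, a sequence of $H^{m+2}$-solutions $u^n$ (mollifying the initial datum, and also $\xi$ when $m>m_*$), applying \eqref{ineq:BKM_apriori} with $m'=m$ to each $u^n$, and passing to the limit using weak-star lower semicontinuity on the left and $\omega^n\to\omega$ in $C([0,T];L^\infty)$ on the right. This requires an iterative continuation: one needs strong convergence $u^n\to u$ in $C([0,T_l];\mathring H^{m_*}_\sigma)$ at each stage to restart the approximations on $[T_l,T_{l+1}]$ via Remark \ref{rem:local_well_init_time}. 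None of this appears in your proposal, and it is the bulk of the actual proof.

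Relatedly, your explanation of the hypothesis $\xi\in(W^{m_*+4,\infty}_\sigma)^K$ when $m=m_*$ is incorrect: it is not forced by ``running the rough-driver estimates at threshold regularity with Kato.'' It is needed so that the approximating $H^{m_*+2}$-solutions exist at all (Definition \ref{def:solution_velocity} demands $\xi\in(W^{(m_*+2)+2,\infty}_\sigma)^K$) and so that the difference estimate \eqref{ineq:diff_est_highest} can be applied at level $m'=m_*$ to obtain convergence in $C([0,T_l];\mathring H^{m_*}_\sigma)$; when $m>m_*$ one can instead mollify $\xi$ and rely on compactness, which is why the weaker hypothesis suffices there.
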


In dimension two, we obtain global well-posedness. 

\begin{theorem}[Global well-posedness in $2d$]\label{thm:global2d} Let $d=2$. Assume that $u_0\in \mathring{H}^m_{\sigma}$ and  $\xi \in (W_{\sigma}^{m+2})^K$. Let $u$ denote the maximally extended $H^{m}$-solution and  $\tilde{\omega} =\operatorname{curl} u$ denote its scalar vorticity. Then $T_{\textnormal{max}}=\infty$ and for all $t\ge 0$ and $p\in [2,\infty]$,
\begin{equation}\label{eq:vort_Lp_conserved}
|\tilde{\omega}_t|_{L^p}=|\tilde{\omega}_0|_{L^p}.
\end{equation}
Moreover, there are constants $C_1=C_1(m)$ and $C_2=C_2(p,m, |\xi|_{W^{m+2,\infty}})$  that increase with $|\xi|_{W^{m+2,\infty}}$ such that  for all $t\ge 0$
\begin{equation}\label{ineq:double_exp_2d}
|u_t|_{H^{m}} \le C_1 (1+|u_{0}|_{H^{m}})\exp\left(C_2(1+\varpi_{\bZ}(0,t))\exp \left(C_2|\tilde{\omega}_0|_{L^\infty}t\right)\right).
\end{equation}
\end{theorem}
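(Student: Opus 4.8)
The plan is to exploit the scalar transport form \eqref{eq:vorticity_2d} of the two-dimensional vorticity equation, in which \emph{both} the transport velocity $u=\operatorname{BS}\omega$ and the perturbation fields $\xi_k$ are divergence-free. This structure delivers the conservation \eqref{eq:vort_Lp_conserved}, and feeding the resulting $L^\infty$-bound on the vorticity into the logarithmic Gr\"onwall estimate underlying the Beale--Kato--Majda criterion (Theorem \ref{thm:BKM}) then produces $T_{\textnormal{max}}=\infty$ together with \eqref{ineq:double_exp_2d}.

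\emph{Step 1: $L^p$-conservation.} Fix $T<T_{\textnormal{max}}$. By Proposition \ref{prop:vorticity} the scalar vorticity $\tilde\omega$ is a bounded path into $\mathring H^{m-1}$ with $m-1\ge m_*-1=2>d/2$, so $\tilde\omega_t\in H^{m-1}(\bbT^2)\hookrightarrow C(\bbT^2)\cap L^\infty(\bbT^2)$ for every $t$ and $H^{m-1}(\bbT^2)$ is a Banach algebra. For $p=2j$ an even integer I would invoke the squaring/product machinery behind the a priori estimates of Section \ref{sec:apriori} (Theorem \ref{thm:URDRemEst}) to see that $\tilde\omega^{\,j}$ again solves the transport RPDE \eqref{eq:vorticity_2d}, with remainder $(\tilde\omega^{\,j})^\natural\in C^{p/3\textnormal{-var}}_{2,\textnormal{loc}}([0,T];H^{m-4})$ (as in Proposition \ref{prop:vorticity}). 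Pairing the corresponding expansion with the constant function $1$, the drift term contributes $\langle\operatorname{div}(\tilde\omega^{\,j}u),1\rangle=0$, the first-order rough term contributes $\langle\operatorname{div}(\tilde\omega^{\,j}\xi_k),1\rangle=0$, and the iterated rough term $\xi_k\cdot\nabla(\xi_l\cdot\nabla\tilde\omega^{\,j})$ also integrates to zero, leaving $\delta\big(\int_{\bbT^2}\tilde\omega_t^{\,p}\,\rmd V\big)_{st}=\langle(\tilde\omega^{\,j})^\natural_{st},1\rangle$. The left-hand side is a genuine additive increment, the right-hand side is $\lesssim\varpi_{\bZ}(s,t)^{3/p}$ with exponent $3/p>1$ (since $p\in[2,3)$), and the usual partition/superadditivity argument then forces $t\mapsto\int_{\bbT^2}\tilde\omega_t^{\,p}\,\rmd V$ to be constant, i.e.\ $|\tilde\omega_t|_{L^p}=|\tilde\omega_0|_{L^p}$.

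For a general exponent $p\in[2,\infty)$ I would either run the same computation with the smooth truncation $F_\varepsilon(x)=(x^2+\varepsilon^2)^{p/2}$ in place of $x^{2j}$ (keeping $F_\varepsilon(\tilde\omega_t)\in H^{m-1}$ by the Moser estimate, the chain rule for the transport RPDE still being valid at this regularity, and then letting $\varepsilon\downarrow0$ by dominated convergence, since $\tilde\omega_t\in L^\infty$), or --- more uniformly in $p$ --- approximate $\bZ$ by canonical lifts $\bZ^n$ of smooth paths $z^n$, observe that the corresponding $\tilde\omega^n$ is transported by the \emph{volume-preserving} flow of the divergence-free field $u^n+\xi_k\dot z^{n,k}$ so that $|\tilde\omega^n_t|_{L^p}=|\tilde\omega_0|_{L^p}$ identically in $p$, and pass to the limit using the continuous dependence on the driver (Corollary \ref{cor:stability}) and the uniform $H^{m-1}$-bound. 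The case $p=\infty$ then follows from $|\tilde\omega_t|_{L^\infty}=\lim_{p\to\infty}|\tilde\omega_t|_{L^p}$ on the finite-measure torus.

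\emph{Step 2: global existence and the growth bound.} With $|\tilde\omega_t|_{L^\infty}=|\tilde\omega_0|_{L^\infty}$ valid on every $[0,T]$ with $T<T_{\textnormal{max}}$, one has $\int_0^T|\omega_s|_{L^\infty}\,\rmd s=T\,|\tilde\omega_0|_{L^\infty}<\infty$; inserting this into the bound \eqref{ineq:BKM_blowup_bound} of Theorem \ref{thm:BKM} (equivalently, running the logarithmic Gr\"onwall argument in its proof, where \eqref{ineq:Kato} gives $|\nabla u_s|_{L^\infty}\lesssim|\tilde\omega_0|_{L^\infty}\ln(e+|u_s|_{H^{m_*}})$ and the a priori $H^{m_*}$-estimates of Section \ref{sec:apriori} and Theorem \ref{thm:local_wp} close the inequality) bounds $\sup_{s\le T}|u_s|_{H^{m_*}}$ by a quantity finite for every finite $T$. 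By the maximality criterion of Corollary \ref{cor:maximal_solution} this forces $T_{\textnormal{max}}=\infty$, whence \eqref{eq:vort_Lp_conserved} holds for all $t\ge0$, and \eqref{ineq:double_exp_2d} is precisely \eqref{ineq:BKM_blowup_bound} with the substitution $\int_0^t|\omega_s|_{L^\infty}\,\rmd s=t\,|\tilde\omega_0|_{L^\infty}$, the $d$-dependence of the constants in Theorem \ref{thm:BKM} being frozen since $d=2$. The step I expect to be the main obstacle is Step 1: making the renormalization/chain rule for the scalar transport RPDE rigorous in the genuinely rough regime $p\in[2,3)$ and checking that the rough remainder $(F(\tilde\omega))^\natural$ pairs to zero against constants --- this is exactly where the standing regularity $m-1>d/2$ (so that products and smooth compositions remain in $H^{m-1}$ and the unbounded-rough-driver remainder estimates of Appendix \ref{sec:URD} apply) and, when continuity in time in the top norm is needed, the mild extra regularity of $\xi$ enter.
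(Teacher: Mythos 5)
Your proposed route (b) for Step~1 --- approximating by canonical lifts of smooth paths, exploiting volume preservation of the smooth divergence-free flow to get $L^p$-conservation for $\tilde\omega^n$, and passing to the limit --- is essentially the paper's proof, and your overall two-step architecture is sound. The purely rough ``squaring'' alternative (a) is not carried out in the paper and indeed requires a renormalization/chain-rule for general powers $\tilde\omega^j$, which the machinery of Section~\ref{sec:apriori} (developed only for the tensor square $\omega\otimes\omega$) does not immediately provide; you are right to flag this as the main obstacle of that route. There are two gaps in your proposal that the paper's proof is structured to avoid.

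First, Step~2 invokes \eqref{ineq:BKM_blowup_bound} from Theorem~\ref{thm:BKM}, but for $m=m_*$ that theorem requires $\xi\in (W^{m_*+4,\infty}_\sigma)^K$, whereas Theorem~\ref{thm:global2d} only assumes $\xi\in (W^{m+2,\infty}_\sigma)^K$. So your Step~2 does not cover the case $m=m_*$ under the stated hypotheses. The paper sidesteps this by never applying a BKM-type bound to $u$ itself: it applies the a~priori estimate \eqref{ineq:BKM_apriori} of Theorem~\ref{thm:solution_est} at level $m'=m$ to the $H^\infty$-regular approximants $u^n$ (for which that estimate applies with no extra room needed in $\xi$, since $\xi^n$ is smooth and $|\xi^n|_{W^{m+2,\infty}}\le|\xi|_{W^{m+2,\infty}}$), obtains a uniform-in-$n$ double-exponential bound in $H^m$, concludes $T_{\mathrm{max}}^n=\infty$ from the classical $2d$ theory, and then passes to the limit.

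Second, passing to the limit in Step~1(b) via Corollary~\ref{cor:stability} is circular: the $d=2$ case of that corollary is proved using precisely \eqref{ineq:double_exp_2d}, which is part of what you are trying to establish, and its non-circular branch is stated only for $m>m_*$ and on intervals $[0,T]$ with $T<T_{\mathrm{max}}$. The correct move, as in the paper, is to bypass Corollary~\ref{cor:stability} entirely and pass to the limit directly by the Arzel\`a--Ascoli/compactness argument from the proof of Theorem~\ref{thm:local_wp} together with uniqueness \eqref{ineq:diff_est_H1}; the uniform $H^m$-bound on $\{u^n\}$ from \eqref{ineq:BKM_apriori} supplies the needed equicontinuity and compactness on every $[0,T]$, and since these limits extend $u$ past any finite time one also obtains $T_{\mathrm{max}}=\infty$ in the same stroke.
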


The following corollary establishes the continuity of the solution map with respect to the data.

\begin{corollary}[Continuous dependence on data]\label{cor:stability}
Assume that $u_0\in \mathring{H}^m_{\sigma}$ and $\xi \in (W_{\sigma}^{m+2})^K$. Further, assume that $
\{\bZ^n\}_{n=1}^\infty$ converges to $\bZ$ in $\mathcal{C}^{p-\textnormal{var}}_{g}$ and that $\{(u_0^n, \xi^n)\}_{n=1}^\infty $ is bounded in $\mathring{H}_{\sigma}^{m} \times W^{m+2,\infty}_{\sigma}$ and converges to $(u_0, \xi)$ in $\dot{L}^{2}_{\sigma}\times (W^{2,\infty}_{\sigma})^K$. Denote by $u$ and  $\{u^n\}_{n=1}^\infty$ the  maximally extended solutions corresponding to the data $(u_0,\xi,\bZ)$ and $\{(u_0^n,\xi^n,\bZ^n)\}_{n=1}^\infty$, and let $(q,h)$ and  $\{(q^n,h^n)\}_{n=1}^\infty$ denote the associated  pressures and harmonic constants. 
\begin{itemize}
\item If $d=2$, then $\{u^n\}_{n=1}^\infty$ converges to $u$ in $C([0,\infty); \mathring{H}^{m-\epsilon}_{\sigma})$ for any $\epsilon>0$ and in the weak-star topology of $L^\infty([0,\infty); \mathring{H}_{\sigma}^{m})$. Moreover,  $\{(q^n,h^n)\}_{n=1}^\infty$  converge to $(q,h)$ in  $C([0,\infty);\mathring{H}^{m-2-\epsilon}) \times  C([0, \infty);\bbR^d)$ for any $\epsilon>0$.
\item If $m>m_*$, then for all $T<T_{\operatorname{max}}$ there exists an $N(T)\in \bbN$ such that $\{u^n\}_{n=N(T)}^\infty$ converges to $u$ in $C([0,T]; \mathring{H}^{m-\epsilon}_{\sigma})$ for any $\epsilon>0$ and in the weak-star topology of $L^\infty([0,T]; \mathring{H}_{\sigma}^{m})$. Moreover,  $\{(q^n,h^n)\}_{n=N(T)}^\infty$  converge to $(q,h)$ in  $C([0,T];\mathring{H}^{m-2-\epsilon}) \times  C([0,T];\bbR^d)$ for any $\epsilon>0$.
\end{itemize}
\end{corollary}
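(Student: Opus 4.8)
The plan is to reduce the statement to three ingredients: (a) uniform-in-$n$ bounds for the approximating solutions $u^n$ in the top norm $\mathring H^m_\sigma$ on the relevant time interval; (b) a stability estimate for the difference $u^n-u$ in the low norm $\dot L^2_\sigma$ that quantifies the dependence on $(u_0^n-u_0,\xi^n-\xi,\bZ^n-\bZ)$; (c) interpolation, weak-$\ast$ compactness, and continuity of the rough integral used to recover $q$ and $h$. For (b), set $v=u^n-u$, subtract the two copies of \eqref{eq:rough_velocity_projected}, and use $u^n\cdot\nabla u^n-u\cdot\nabla u=v\cdot\nabla u^n+u\cdot\nabla v$ together with the telescoping identities $\pounds^*_{\xi^n_k}u^n-\pounds^*_{\xi_k}u=\pounds^*_{\xi^n_k}v+\pounds^*_{\xi^n_k-\xi_k}u$ and $\pounds^*_{\xi^n_k}P\pounds^*_{\xi^n_l}u^n-\pounds^*_{\xi_k}P\pounds^*_{\xi_l}u=\pounds^*_{\xi^n_k}P\pounds^*_{\xi^n_l}v+\pounds^*_{\xi^n_k}P\pounds^*_{\xi^n_l-\xi_l}u+\pounds^*_{\xi^n_k-\xi_k}P\pounds^*_{\xi_l}u$. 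Then $v$ solves a linear equation of unbounded-rough-driver type whose transport pair is $(v\mapsto P\pounds^*_{\xi^n_k}v,\,v\mapsto P\pounds^*_{\xi^n_k}P\pounds^*_{\xi^n_l}v)$, whose bounded drift is $-P(v\cdot\nabla u^n+u\cdot\nabla v)$, and whose inhomogeneities are the $\rmd\bZ^{n,k}$-integrals of $P\pounds^*_{\xi^n_k-\xi_k}u$, the $\rmd(\bZ^n-\bZ)^k$-integrals of $P\pounds^*_{\xi_k}u$, and their iterated analogues. Because $\operatorname{div}u^n=\operatorname{div}u=0$, the contribution $(P(u\cdot\nabla v),v)_{L^2}$ vanishes, and one checks $(P\pounds^*_{\xi^n_k}v,v)_{L^2}\lesssim|D\xi^n_k|_{L^\infty}|v|_{L^2}^2$; running the remainder estimate (Theorem \ref{thm:remainder_est}, cf.\ Theorem \ref{thm:URDRemEst}) on the equation for $|v|_{L^2}^2$ and closing with the rough Gr\"onwall lemma of Appendix \ref{sec:rough_gron} — exactly as in the difference estimates of Section \ref{sec:apriori}, but keeping the inhomogeneities — yields
\begin{equation*}
\sup_{t\le T}|u^n_t-u_t|_{L^2}\le C\Bigl(|u_0^n-u_0|_{L^2}+|\xi^n-\xi|_{W^{2,\infty}}+\delta_{p;[0,T]}(\bZ^n,\bZ)\Bigr),
\end{equation*}
where $C$ depends only on $T$ and on uniform bounds for $\sup_{t\le T}|u^n_t|_{H^{m_*}}$, $\sup_{t\le T}|u_t|_{H^{m_*}}$, $|\xi^n|_{W^{m_*+2,\infty}}$, $\varpi_{\bZ^n}(0,T)$, and $\delta_{p;[0,T]}(\bZ^n,\bZ)$ is the distance between $\bZ^n$ and $\bZ$ in $\clC^{p-\textnormal{var}}([0,T];\bbR^K)$. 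By hypothesis the right-hand side tends to $0$.

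For ingredient (a) in the case $d=2$: since $m\ge m_*=3$, $\tilde\omega^n_0=\operatorname{curl}u_0^n$ is bounded in $H^{m-1}\subset H^2\subset L^\infty$, and $\sup_n\varpi_{\bZ^n}(0,t)<\infty$ for every finite $t$ (convergence in the Fr\'echet space $\clC^{p-\textnormal{var}}_g$), so \eqref{ineq:double_exp_2d} gives $\sup_{t\le T}|u^n_t|_{H^m}\le C(T)$ uniformly in $n$, for every $T<\infty$; here every $u^n$ is global by Theorem \ref{thm:global2d}, so no restriction on $n$ is needed. For the case $m>m_*$ the existence times must be controlled uniformly in $n$, and this is the crux of the argument. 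Fix $T<T_{\textnormal{max}}$, put $M=\sup_{t\le T}|u_t|_{H^{m_*}}<\infty$, and define $\tau_n=\inf\{t\le T\wedge T^n_{\textnormal{max}}:|u^n_t|_{H^{m_*}}\ge M+1\}$. On $[0,\tau_n]$ one has $|\nabla u^n_s|_{L^\infty}\lesssim|u^n_s|_{H^{m_*}}\le M+1$, hence by \eqref{ineq:m_soln_est} $\sup_{[0,\tau_n]}|u^n|_{H^m}\le C$ uniformly in $n$; applying the stability estimate on $[0,\tau_n]$ and interpolating $\dot L^2_\sigma\leftrightarrow\mathring H^m_\sigma$ (legitimate since $m>m_*$: $|w|_{H^{m_*}}\lesssim|w|_{L^2}^{1-m_*/m}|w|_{H^m}^{m_*/m}$) gives $\sup_{[0,\tau_n]}|u^n_t-u_t|_{H^{m_*}}\to0$. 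Thus for $n\ge N(T)$, $\sup_{[0,\tau_n]}|u^n_t|_{H^{m_*}}\le M+\tfrac12<M+1$, which forces $\tau_n=T\wedge T^n_{\textnormal{max}}$; and $\tau_n=T^n_{\textnormal{max}}<T$ is impossible because then $\limsup_{t\uparrow T^n_{\textnormal{max}}}|u^n_t|_{H^{m_*}}=\infty$ by Corollary \ref{cor:maximal_solution}, contradicting the bound just obtained. Hence $T^n_{\textnormal{max}}>T$, $\sup_{[0,T]}|u^n|_{H^m}\le C$ uniformly for $n\ge N(T)$, and $\sup_{[0,T]}|u^n_t-u_t|_{H^{m_*}}\to0$.

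Given the uniform $H^m$-bound and the low-norm convergence, interpolation ($\dot L^2_\sigma\leftrightarrow\mathring H^m_\sigma$ in the $2d$ case, $\mathring H^{m_*}_\sigma\leftrightarrow\mathring H^m_\sigma$ when $m>m_*$) yields $\sup_{t}|u^n_t-u_t|_{H^{m-\epsilon}}\to0$ for every $\epsilon>0$ — on $[0,\infty)$ uniformly on compacts in $2d$, on $[0,T]$ for $n\ge N(T)$ when $m>m_*$. Since $\{u^n\}$ is bounded in $L^\infty([0,T];\mathring H^m_\sigma)$ and $u\in L^\infty([0,T];\mathring H^m_\sigma)$, Banach--Alaoglu plus the identification of every weak-$\ast$ limit point with the strong $H^{m-\epsilon}$-limit $u$ gives weak-$\ast$ convergence in $L^\infty([0,T];\mathring H^m_\sigma)$. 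Finally, for the pressure and harmonic constant, invoke Proposition \ref{prop:pressure_and_constant_recovery}: $\nabla q^n=\int_0^\cdot Q(u^n_s\cdot\nabla u^n_s)\,\rmd s-\int_0^\cdot Q\pounds^*_{\xi^n_k}u^n_s\,\rmd\bZ^{n,k}_s$ and $h^n=-\int_0^\cdot H\pounds^*_{\xi^n_k}u^n_s\,\rmd\bZ^{n,k}_s$. Using that $\xi^n\to\xi$ in $(W^{2,\infty}_\sigma)^K$ is bounded in $(W^{m+2,\infty}_\sigma)^K$, interpolation gives $\xi^n\to\xi$ in $W^{s,\infty}$ for every $s<m+2$; combined with $u^n\to u$ in $C(H^{m-\epsilon})$ this shows $Q\pounds^*_{\xi^n_k}u^n$ and its Gubinelli derivative $Q\pounds^*_{\xi^n_k}P\pounds^*_{\xi^n_l}u^n$ converge in $C([0,T];H^{m-1-\epsilon})$ and $C([0,T];H^{m-2-\epsilon})$, and likewise $Q(u^n\cdot\nabla u^n)\to Q(u\cdot\nabla u)$ in $C([0,T];H^{m-1-\epsilon})$. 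Continuity of the $\rmd t$-integral and of the rough integral — i.e.\ of the sewing construction of Lemma \ref{lem:sewing} — with respect to the integrand, its Gubinelli derivative, and the driver $\bZ^n\to\bZ$ then gives $\nabla q^n\to\nabla q$ in $C([0,T];H^{m-3-\epsilon})$ and $h^n\to h$ in $C([0,T];\bbR^d)$, whence (inverting the gradient) $q^n\to q$ in $C([0,T];\mathring H^{m-2-\epsilon})$; in $2d$ these hold on every $[0,T]$, i.e.\ in the stated $C([0,\infty);\cdot)$ spaces. The main obstacle is closing the loop in the case $m>m_*$: the low-norm stability estimate has constants that require uniform high-norm control, while high-norm control near $T_{\textnormal{max}}$ is governed by the blow-up criterion — the stopping-time/continuation argument above is precisely what resolves this; a secondary technical point is the careful bookkeeping of the $\xi^n-\xi$ and $\bZ^n-\bZ$ inhomogeneities through the unbounded-rough-driver estimates.
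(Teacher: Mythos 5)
Your overall plan is a genuinely different route from the paper's: the paper establishes $u^n\to u$ by the same compactness-plus-uniqueness machinery it already deployed in the proofs of Theorem \ref{thm:local_wp} and Theorem \ref{thm:BKM} (uniform $H^m$-bounds $\Rightarrow$ remainder estimates from Theorem \ref{thm:remainder_est} $\Rightarrow$ equicontinuity in $H^{m-\epsilon}$ $\Rightarrow$ Arzel\`a--Ascoli $\Rightarrow$ every limit point is a solution $\Rightarrow$ uniqueness identifies it with $u$), whereas you want a \emph{quantitative} $L^2$-stability estimate for $v=u^n-u$. That is an attractive alternative because it would give a rate; but as written, your central step (b) has a genuine gap. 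Theorem \ref{thm:diff_est}, which you say you are following ``exactly as in the difference estimates of Section \ref{sec:apriori}, but keeping the inhomogeneities,'' only covers two solutions with the \emph{same} $(\bZ,\xi)$. Once $\xi^n\ne\xi$ and $\bZ^n\ne\bZ$, your telescoping produces source terms of the form $\int P\pounds^*_{\xi^n_k-\xi_k}u\,\rmd\bZ^{n,k}$ and $\int P\pounds^*_{\xi_k}u\,\rmd(\bZ^n-\bZ)^k$ together with their iterated analogues. These are $p$-variation rough forcings, not $1$-variation drifts, and the unbounded-rough-driver framework you invoke (Definition \ref{def:urd_eqn}, Theorem \ref{thm:URDRemEst}, Lemma \ref{lem:RoughGronwall}) handles only equations $\rmd f+\mu(\rmd t)+\bA(\rmd t)f=0$ with $\mu$ of bounded $1$-variation, and the rough Gr\"onwall hypothesis requires the non-local term to be controlled by $\varpi^{1/p}\sup G$ and $\int\kappa G$ with $\kappa\in L^1$. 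The cross terms coming from pairing the rough forcing against $v$ in the equation for $|v|_{L^2}^2$ are neither of those shapes; closing the argument would need a strictly stronger inhomogeneous remainder estimate and a modified Gr\"onwall lemma, neither of which is available in the paper. Also, $\bZ^n-\bZ$ is not itself a rough path satisfying Chen's relations, so ``the $\rmd(\bZ^n-\bZ)$-integral'' needs its own construction. Flagging this as ``a secondary technical point'' understates what is actually the crux: it is precisely this difficulty that the paper's compactness--uniqueness argument is designed to avoid.

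Your stopping-time continuation argument for $m>m_*$ is fine in spirit and is a variant of what the paper does via the sequence $\{T_l\}$ from Corollary \ref{cor:maximal_solution} and the proof of Theorem \ref{thm:BKM}; but as you set it up, it depends on the very stability estimate that is not proven, so it cannot be used to repair (b). For the pressure and harmonic constant, you propose continuity of the sewing construction in the integrand, Gubinelli derivative, and driver; this is plausible and would give a cleaner statement, but it is a further unproved lemma. The paper instead bounds $\{(\nabla q^n,h^n)\}$ uniformly in $C^{p-\textnormal{var}}$ using Theorem \ref{thm:remainder_est} and the sewing estimate \eqref{ineq:sewing_lemma}, and again passes to the limit via compactness and the uniqueness of $(q,h)$ from Proposition \ref{prop:pressure_and_constant_recovery}. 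If you want to salvage your approach, either prove an inhomogeneous version of Theorem \ref{thm:URDRemEst} and an accompanying Gr\"onwall inequality allowing a $p$-variation source term paired with $\sup|v|$, or fall back on the paper's compactness route and drop the quantitative claim.
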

\begin{remark}[Rough Navier-Stokes and inviscid limit]\label{rem:Navier-Stokes}
Let $\nu>0$.  In \cite{hofmanova2019rough}, two of the authors considered  the rough Navier-Stokes system given by
\begin{equation*}
\begin{cases}
\rmd u +u\cdot \nabla u\,\rmd t - \pounds_{\xi_k}^*u\,\rmd Z^k_t = \nu \Delta u-\rmd\nabla  q_t - \rmd h_t,\quad \textnormal{on}\;\; [0,T]\times \bbT^d ,\\
\operatorname{div}  u=0 \quad \textnormal{on}\;\; [0,T]\times \bbT^d ,\\
\int_{\bbT^d}u\, dV=0,\quad  \int_{\bbT^d}q\,dV=0 \quad \textnormal{on} \;\;[0,T],\\
u=u_0,\quad q=0,\quad  h=0 \quad \textnormal{on} \;\;\{0\}\times \bbT^d.
\end{cases}
\end{equation*}
for $d\in \{2,3\}$. We showed that for arbitrarily given $u_0\in \mathring{H}^{1}_{\sigma}$, there exists  a time $T_*=T_*(d,\varpi_{\bZ},|\xi|_{W^{3,\infty}})$ and a solution $u\in L^2([0,T_*];\mathring{H}^2_{\sigma}) \cap L^\infty([0,T_*];\mathring{H}^{1}_{\sigma})$. In fact, the solution was not constrained to be mean-free in \cite{hofmanova2019rough}, but the proof goes through in a simpler manner. In dimension two, we also proved that there is a unique global solution of the Navier-Stokes system. With minor changes in the present paper, we can derive the existence and uniqueness of a maximally extended solution $u \in L^2([0,T_{\textnormal{max}});\mathring{H}^{m+1}_{\sigma}) \cap C([0,T_{\textnormal{max}});\mathring{H}^{m}_{\sigma})$ in any dimension $d\ge 2$. Moreover, we can show that  if $d=2$ or $m>m_*$, then for an arbitrarily given $\{\nu^n\}_{n=1}^\infty$ converging to zero, the corresponding sequence of Navier-Stokes solutions $\{u^n\}_{n=1}^\infty$ converges to the Euler solution in  $C([0,T_{\textnormal{max}}); \mathring{H}^{m-\epsilon}_{\sigma})$ for any $\epsilon>0$ and in the weak-star topology of $L^\infty([0,T_{\textnormal{max}}); \mathring{H}_{\sigma}^{m})$. The inviscid limit  stochastic Naiver-Stokes equations with additive and multiplicative noise has been studied in  \cite{glatt2015inviscid, breit2018stochastically}.
\end{remark}

\subsubsection{Applications to stochastic partial differential equations}\label{sec:main_res_state_wong_zakai}
In what follows, we will discuss the Wong-Zakai approximation of the Euler stochastic partial differential equation (SPDE) driven by Brownian motion in dimension two.
Let $B=\{B^k\}_{k=1}^K$ denote  a collection of $K$-independent Brownian motions adapted to a filtered probability space  $(\clZ,\clF, \bbF=\{\clF_t\}_{t\ge 0},\bbP)$  satisfying the usual conditions.  In \cite{lang2022well}, it was shown that for every  $\clF_0$-adapted initial velocity $u_0\in \mathring{H}^{m-1}_{\sigma}$, there exists a unique $\bbF$-adapted process  $\bar{u} \in C([0,\infty);\mathring{H}_{\sigma}^{m})$ such that $\bbP$-a.s.\ for all $t\in [0,\infty)$,
\begin{equation}\label{eq:SPDE}
\bar{u}_{t}-u_0+ \int_0^{t}P(\bar{u}_s\cdot \nabla \bar{u}_s)\,\rmd s - \int_0^{t} P\pounds_{\xi_k}^*\bar{u}_s \circ \rmd B_s^k =0,
\end{equation}
where equality is understood in $L^2$ and the stochastic integral is understood in the Stratonovich sense. 

By Proposition 3.5 of \cite{friz2020course}, $\bbP$-a.s., $\bB=(\delta B,\bbB^{\operatorname{strat}})\in  \clC_g^{p-\operatorname{var}}(\bbR_+;\bbR^K)$, where for each $l,k\in \{1,\ldots, K\}$ and $(s,t)\in \Delta_{[0,\infty)}$:
$$
\bbB^{\operatorname{strat};lk}_{st}:=\int_s^t(B_r^l-B_s^l)\circ \rmd B_{r}^k.
$$
By Proposition 3.6 of \cite{friz2020course}, the canonical lift of a dyadic piecewise-linear approximation $\{B^{n}\}_{n=1}^\infty$ of the Brownian motion $B$, denoted by $\{\bB^{n}\}_{n=1}^\infty=\{(B^{n},\bbB^{n})\}_{n=1}^\infty\in  \clC_g^{1-\operatorname{var}}(\bbR_+;\bbR^K)$, converges $\bbP$-a.s.\ to $\bZ$ in $\clC_g^{p-\operatorname{var}}(\bbR_+;\bbR^K)$.

By Corollary \ref{thm:global2d}, $\bbP$-a.s., corresponding to the data $(u_0,\bB,\xi)$ and $\{(u_0, \bB^n,\xi)\}_{n=1}^\infty$, there exists unique solutions $u,\{u^n\}_{n=1}^\infty\in C([0,\infty),\mathring{H}^{m}_{\sigma})$ of \eqref{eq:vorticity_2d}. Owing to Corollary \ref{cor:stability},  the sequence  $\{u^n\}_{n=1}^\infty$ converges to $u$ in  $C([0,T_{\textnormal{max}}); \mathring{H}^{m-\epsilon}_{\sigma})$ for any $\epsilon>0$  and in the weak-star topology of $L^\infty([0,\infty); \mathring{H}_{\sigma}^{m})$.

For every $t\in \bbR_+$, the map $\bZ|_{[0,t]} \in \clC_g^{p-\textnormal{var}}([0,t];\bbR^K)\mapsto u\in C([0,t],\mathring{H}^{m-}_{\sigma})$ is continuous and the map $\zeta\in \clZ\mapsto  \bB(\zeta)|_{[0,t]} \in  \clC_g^{p-\textnormal{var}}([0,t];\bbR^K)$ is measurable. Thus, we conclude that the composition of the two maps is measurable, and hence that the solution $u$ is $\bF$-adapted. As explained above (see \eqref{eq:rv_int_proj}), $\bbP$-a.s., for all $t\in [0,T]$, we have
$$
u_t-u_0+ \int_0^tP(u_r\cdot \nabla u_r)\,\rmd r-  \int_0^tP\pounds_{\xi_k}^*u_s\, \rmd \bB^k_{s} =0.
$$
It can be shown  (see, e.g., \cite{friz2020course}[Corollary 5.2]) that $\bbP$-a.s.\ for all $t\in [0,T]$,
$$
\int_0^tP\pounds_{\xi_k}^*u_s\, \rmd \bB^k_{s}= \int_0^tP\pounds_{\xi_k}^*u_s \circ \, \rmd B^k_{s}.
$$
Therefore, we  obtain the following Wong-Zakai approximation result. 

\begin{theorem}[Wong-Zakai approximation]\label{thm:Wong-Zakai}
The stochastic process $u$ is indistinguishable from $\bar{u}$, and 
$\bbP$-a.s., $\{u^{n}\}_{n=1}^\infty$ converges to $u$ in $C([0,\infty); \mathring{H}^{m-\epsilon}_{\sigma})$ for any $\epsilon>0$ and in the weak-star topology of $L^\infty([0,\infty); \mathring{H}_{\sigma}^{m})$.
\end{theorem}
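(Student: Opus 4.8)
The plan is to assemble the Wong--Zakai theorem from three ingredients that have already been developed, so the proof is essentially a matter of identifying the rough-path solution with the Stratonovich SPDE solution and then invoking the continuous-dependence corollary pathwise. First I would fix a full-measure event on which all the relevant almost-sure statements hold simultaneously: by Proposition~3.5 of \cite{friz2020course}, $\bB=(\delta B,\bbB^{\operatorname{strat}})\in\clC_g^{p-\operatorname{var}}(\bbR_+;\bbR^K)$; by Proposition~3.6 of \cite{friz2020course}, the dyadic piecewise-linear lifts $\{\bB^n\}$ converge to $\bB$ in $\clC_g^{p-\operatorname{var}}(\bbR_+;\bbR^K)$; and by Theorem~\ref{thm:global2d} (applied with $d=2$), on this event there exist unique global $H^m$-solutions $u$ and $\{u^n\}$ of the rough Euler system driven by $(u_0,\xi,\bB)$ and $(u_0,\xi,\bB^n)$ respectively. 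On the intersection of these events (still full measure), everything below is a deterministic, pathwise assertion.

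The core step is the identification $u=\bar u$ up to indistinguishability. On the good event, the rough solution $u$ satisfies the projected integral identity \eqref{eq:rv_int_proj}, namely
$$
u_t-u_0+\int_0^t P(u_r\cdot\nabla u_r)\,\rmd r-\int_0^t P\pounds_{\xi_k}^*u_s\,\rmd\bB^k_s=0
$$
in $\mathring H_{\sigma}^{m-3}$, with the rough integral defined via the sewing lemma. By Corollary~5.2 of \cite{friz2020course} (the rough-versus-Stratonovich comparison for adapted integrands), the rough integral against $\bB$ coincides $\bbP$-a.s.\ with the Stratonovich integral $\int_0^t P\pounds_{\xi_k}^*u_s\circ\rmd B^k_s$; here one uses that $u$ is $\bF$-adapted, which follows from writing $u$ as the composition of the measurable map $\zeta\mapsto\bB(\zeta)|_{[0,t]}$ with the continuous solution map $\bZ\mapsto u$ supplied by Corollary~\ref{cor:stability}. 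Hence $u$ solves \eqref{eq:SPDE} $\bbP$-a.s., and by the uniqueness part of the result of \cite{lang2022well} for that Stratonovich SPDE, $u$ and $\bar u$ are indistinguishable. The convergence statement is then immediate: since $\bB^n\to\bB$ in $\clC_g^{p-\operatorname{var}}$ on the good event and the data $(u_0,\xi)$ are held fixed, Corollary~\ref{cor:stability} (the $d=2$ bullet) gives $u^n\to u$ in $C([0,\infty);\mathring H^{m-\epsilon}_{\sigma})$ for every $\epsilon>0$ and in the weak-star topology of $L^\infty([0,\infty);\mathring H^m_{\sigma})$, for $\bbP$-a.e.\ $\zeta$.

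There is a minor bookkeeping point worth flagging rather than an analytic obstacle: the SPDE well-posedness of \cite{lang2022well} is quoted for $\clF_0$-measurable initial data $u_0\in\mathring H^{m-1}_\sigma$, whereas here $u_0\in\mathring H^m_\sigma$ is deterministic, so the adaptedness and uniqueness hypotheses are trivially met; one only needs that the regularity index matches, which it does since $m\ge m_*$ and $\mathring H^m_\sigma\subset\mathring H^{m-1}_\sigma$. The main (and only real) subtlety is the adaptedness argument needed to legitimately apply the rough-to-Stratonovich conversion of \cite{friz2020course}: one must verify measurability of $\zeta\mapsto u$, for which the right tool is precisely the joint continuity of the solution map in the driver furnished by Corollary~\ref{cor:stability}, composed with the standard measurability of the Brownian rough lift. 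Once adaptedness is in hand, the conversion lemma and the SPDE uniqueness theorem do the rest, and no new estimates are required.
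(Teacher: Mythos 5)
Your proposal is correct and reproduces the paper's argument essentially step-for-step: both fix a full-measure event via Propositions 3.5 and 3.6 of \cite{friz2020course}, apply Theorem \ref{thm:global2d} pathwise, establish adaptedness of $u$ by composing the measurable Brownian lift with the continuous solution map from Corollary \ref{cor:stability}, invoke Corollary 5.2 of \cite{friz2020course} to identify the rough integral with the Stratonovich one, and conclude indistinguishability from the uniqueness result of \cite{lang2022well}, while Corollary \ref{cor:stability} already gives the stated convergence modes. No new ideas or estimates are involved beyond those in Section \ref{sec:main_res_state_wong_zakai}.
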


\begin{remark}[Strook-Varadhan support theorem, large deviations principle, and random dynamical system]
As in \cite{friz2020course}[Section 9.3], using the continuity of the solution map, one can characterize the support of the law of the SPDE \eqref{eq:SPDE} in  $C([0,T_{\textnormal{max}}); \mathring{H}^{m-\epsilon}_{\sigma})$ for any $\epsilon>0$ in terms of Cameron-Martin space and prove a large deviations principle (making use of contraction principle) for
$$
\rmd u+P(u\cdot \nabla u)\,\rmd t -  \epsilon P\pounds_{\xi_k}^*u \circ \rmd B_t^k =0,
$$
which concerns small-noise deviations about solutions of the Euler system
$$
\partial_t u+P(u\cdot \nabla u)=0.
$$
Moreover, if  the driving rough path $\bZ:\clZ\rightarrow \clC_g^{p-\textnormal{var}}$ is a continuous $p$-rough
path co-cycle\footnote{This is the appropriate notion of random shifts in the rough path which enables the construction of a random dynamical system.}, viz
$$
Z_{0,s+t}(\zeta) = Z_{0,s}(\zeta) + Z_{0,t}(\theta_s \zeta) , \quad \mathbb{Z}_{0,s+t}(\zeta) = \mathbb{Z}_{0,s}(\zeta) + \mathbb{Z}_{0,t}( \theta_s \zeta) + Z_{0,s}(\zeta) \otimes Z_{0,t}(\theta_s \zeta), \;\;\forall (s,t)\in \Delta_T,
$$
where $\theta_s$ is the time-shift $\theta_s \zeta_t = \zeta_{t+s} - \zeta_s$, then the $2D$-system \eqref{eq:rough_velocity_projected} generates a continuous random dynamical system on   $C([0,T_{\textnormal{max}}); \mathring{H}^{m-\epsilon}_{\sigma})$ for any $\epsilon>0$. See\cite{hofmanova2019rough}. 
\end{remark}
\begin{remark}[Gaussian rough paths]
Our main results also yield a solution theory in any dimension $d\in \{2,3,\dots, \}$ for a class of Euler SPDEs driven by fractional Brownian or more general Gaussian processes transport noise. We refer the reader to \cite{friz2020course} for more details about the lifts of Gaussian processes to the space of geometric rough paths. In the introduction, we have discussed the potential applications of such models to stochastic parameterizations of sub-grid scales of ideal fluids.
\end{remark}

\subsubsection{Critical points of the Clebsch and Hamilton-Pontryagin variational principles}\label{sec:main_res_state_critical}
For an arbitrarily given Fr\'echet space $E$ and time $T>0$, denote by $\clD_{Z}([0,T];E)$  the space of of $E$-valued $\bZ$-controlled rough paths on the interval $[0,T]$. We refer to \cite{friz2020course} for precise definitions. In \cite{crisan2022variational}, we introduced the Clebsch action functional 
$$
S^{\textnormal{Clb}_{\bZ}}(u,(\lambda^q)_{q=1}^d,(a^q)_{q=1}^d) =\int_0^T \frac{1}{2}|u_t|^2_{L^2}\,
\rmd t + \sum_{q=1}^d\left(\lambda_t^q,\rmd a_t^q + \pounds_{u_t}a_t^q\,\rmd t + \pounds_{\xi_k} a_t^q\,\rmd \bZ_t^k\right)_{L^2},
$$
defined for $u\in C^{p-\textnormal{var}}([0,T]; \mathring{C}^\infty_{\sigma})$ and $(\lambda^q)_{q=1}^d,(a^q)_{q=1}^d\in \clD_{\bZ}([0,T];C^\infty)$.
We showed that critical points are characterized by the following system of RPDEs:
\begin{equation*}
\begin{cases}
\rmd u -  P\pounds_{u}^* u\rmd t - P\pounds_{\xi_k}^*u\rmd \bZ_t^k =0, \;\;
u=P(\sum_{q=1}^d a^q\nabla \lambda^q), \\
\rmd \lambda +\pounds_{u} u \rmd t + \pounds_{\xi_k} \lambda \rmd \bZ_t^k=0,\\
\rmd a + \pounds_{u}a\rmd t + \pounds_{\xi_k} a\rmd \bZ_t^k=0.
\end{cases}
\end{equation*}
We also introduced the  Hamilton-Pontryagin action functional
\begin{equation}\label{def:HP_functional}
S^{\textnormal{HP}_{\bZ}}(u, \phi,\lambda) =\int_0^T \frac{1}{2}|u_t|^2_{L^2}
\,\rmd t + \left( \lambda_t,\rmd \phi_t \circ \phi_t^{-1}- u_t\,\rmd t - \xi_k \,\rmd \bZ_t^k\right)_{L^2},
\end{equation}
defined for $$(u,\lambda, \phi)\in C^{p-\textnormal{var}}([0,T]; \mathring{C}^\infty_{\sigma})\times \clD_{\bZ}([0,T];\mathring{C}_{\sigma}^\infty)\times \operatorname{Diff}_{\bZ}([0,T];\bbT^d).$$ The space  $\operatorname{Diff}_{\bZ}([0,T];\bbT^d)$ consists of all rough flows (i.e., diffeomorphisms on $\bbT^d$)  $\{\phi_t\}_{t\in [0,T]} $ (see  \cite{crisan2022variational}) of the form
\begin{equation*}
\begin{cases}
\rmd \phi = v\circ \phi\, \rmd t  + \sigma_k\circ \phi \,\rmd \bZ_t^k, \;\;t\in (0,T],\\
\phi_{0}=\operatorname{id},
\end{cases}
\end{equation*}
for some $(v,\sigma)\in C^{p-\textnormal{var}}([0,T];\mathring{C}^\infty_{\sigma})\times C^\infty([0,T];(C^\infty_{\sigma})^K)$. Moreover, for $\phi \in \operatorname{Diff}_{\bZ}([0,T];\bbT^d)$, the integral in \eqref{def:HP_functional} is defined by
$$
\int_0^T\langle \lambda_t,\rmd \phi_t \circ \phi_t^{-1}\rangle:=\int_0^T (\lambda_t,v_t)_{L^2}\,\rmd t + (\lambda_t,\sigma_t)_{L^2}\, \rmd \bZ_t^k.
$$
We refer the reader to \cite{crisan2022variational} for the specifics of how variations of $\operatorname{Diff}_{\bZ}([0,T];\bbT^d)$ are defined. In \cite{crisan2022variational}, we showed that if $\bZ$ is truly rough (see \cite{friz2020course}[Definition 6.3), then $(u,\phi,\lambda)$ is a critical point of $S^{\textnormal{HP}_{\bZ}}$ if and only if 
\begin{equation*}
\begin{cases}
\rmd u - P\pounds_{u}^* u\rmd t- P\pounds_{\xi_k}^*u\rmd \bZ_t^k =0, \;\;
u=\lambda, \\
\rmd \phi = u\circ \phi \,\rmd t  + \xi_k\circ \phi \,\rmd \bZ_t^k.
\end{cases}
\end{equation*}

In \cite{crisan2022variational}, we showed that critical points satisfy a Kelvin circulation balance law: for an arbitrarily given smooth closed curve $\gamma\subset \bbT^d$ and all $t\in [0,T]$,
$$
\rmd \oint_{\phi_t (\gamma)} u^{\flat}_t= \oint_{\phi_t (\gamma)} (\rmd u^{\flat} _t+\pounds_{u_t}u_t^{\flat}\,\rmd t+\pounds_{\xi_k}u_t^{\flat}\,\rmd \bZ^k_t)=0.
$$

Assume that $u_0\in \mathring{C}^\infty_{\sigma}$ and $\xi \in (\mathring{C}^\infty_{\sigma})^K$.  Denote by $u\in C^{p-\textnormal{var}}([0,T_{\textnormal{max}});\mathring{C}^\infty_{\sigma})$  the unique maximally extended $H^{\infty}$-solution of \eqref{eq:rough_velocity_projected} and let $T<T_{\textnormal{max}}$. Let  $\{\phi_t\}_{t\in [0,T]} \in \operatorname{Diff}_{\bZ}([0,T];\bbT^d)$  denote the rough flow  (see, e.g., \cite{bailleul2017random, friz2020course, crisan2022variational}) satisfying 
\begin{equation*}
\begin{cases}
\rmd \phi = u\circ \phi\, \rmd t  + \xi_k\circ \phi \,\rmd \bZ_t^k, \;\;t\in (0,T],\\
\phi_{0}=\operatorname{id}.
\end{cases}
\end{equation*}
It follows that $(u,\phi,u)$ is a critical point of  $S^{\textnormal{HP}_{\bZ}}$ provided $\bZ$ is truly rough.

To show that we can construct a critical point of $S^{\textnormal{Clb}_{\bZ}}$, we must find $\lambda$ and $a$ such that the so-called Clebsch representation $u^{\flat}=P(\sum_{q=1}^d a^q\bd\lambda^q)$ holds. Let $\lambda=\phi^{-1}$ and $a = u_0\circ \phi^{-1}$. By Theorem 3.3 in \cite{crisan2022variational}, we have
\begin{equation*}
\begin{cases}
\rmd \lambda +\pounds_{u} u \rmd t + \pounds_{\xi_k} \lambda \rmd \bZ_t^k=0,\\
\rmd a + \pounds_{u}a\rmd t + \pounds_{\xi_k} a\rmd \bZ_t^k=0.
\end{cases}
\end{equation*}
Proceeding as in \cite{drivas2020circulation}[Lemma 3] and applying the product rule for geometric rough paths (see, e.g., \cite{friz2020course, crisan2022variational}), it follows that $v  = \sum_{q=1}^d a^q\bd \lambda^q$ satisfies  the linear RPDE
$$
\rmd v  -\pounds_{u}^*v \, \rmd t -\pounds_{\xi_k}^* v  \,\rmd \bZ_t^k =0.
$$
By virtue of  identity \eqref{eq:projection_Lie}, we find that $\bar{v}:=Pv \in C^{p-\textnormal{var}}([0,T];\mathring{H}^m_{\sigma})$ is a solution of the constrained (to the space of divergence and mean-free vector-fields) linear RPDE given by
\begin{equation}\label{eq:proj_lin}
\rmd \bar{v} - P\pounds^*_{v}  \bar{v} \, \rmd t - P\pounds_{\xi_k}^*  \bar{v}  \,\rmd Z_t^k =0.
\end{equation}
As in Proposition \ref{prop:vorticity}, using the operators $\bd \flat$ and $\operatorname{BS}$ we can establish an equivalence between solutions of \eqref{eq:proj_lin} and solutions of the unconstrained linear RPDE
$$
\rmd \bar{\omega} + \pounds_{u} \bar{\omega} \,\rmd t + \pounds_{\xi_k} \bar{\omega}\, \rmd \bZ_t^k =0.
$$
Thus, by the uniqueness of solutions of unconstrained linear RPDEs (see, e.g., \eqref{ineq:diff_est_H1} in Theorem \ref{thm:diff_est} or \cite{BaGu15}), we can deduce uniqueness of solutions of \eqref{eq:proj_lin}. Since $u$ also solves \eqref{eq:projection_Lie}, and hence \eqref{eq:proj_lin}, we deduce that 
\begin{equation}\label{eq:WeberFormula}
u= P\left(\sum_{q=1}^d a^q\nabla \lambda^q\right)=P(\nabla \phi^{-1} u_0\circ \phi^{-1})  \quad \Leftrightarrow  \quad u^{\flat} =P\phi_*u_0^{\flat},
\end{equation}
where with abuse of notation we denote by $P\in \clL(H^s(\bbT^d;(\bbR^d)^*); H^s_{\delta}(\bbT^d;(\bbR^d)$ the corresponding projection associated with the one-form Hodge decomposition (see Section \ref{sec:prelim_Hodge}). It follows that  $(u,\lambda, a)$ is a critical point of $S^{\textnormal{Clb}_{\bZ}}$. The above representation extends the well-known Weber formula (see, e.g., \cite{constantin2017analysis}). Since the exterior derivative  $\bd$ is natural with the push-forward, we also obtain the following representation of the vorticity two-form
$
\omega =\phi_*\omega_0.
$

\section{A priori  estimates}\label{sec:apriori}

The goal of this section is to establish a priori estimates of  remainders and solutions of \eqref{eq:rough_velocity_projected}. 

\subsection{Remainder estimates}
\begin{theorem}[Remainder estimates]\label{thm:remainder_est}
Let $u$ be an $H^{m}$-solution of \eqref{eq:rough_velocity_projected} on the interval $[0,T]$ and $\varpi_{\mu}(s,t):=\int_s^t|\nabla u_r|_{L^\infty}|u_r|_{H^m}\,\rmd r$, $(s,t)\in \Delta_T$.
Then there exists a constant $C=C(p,d,m, |\xi|_{W^{m+2,\infty}})$ which increases with $|\xi|_{W^{m+2,\infty}}$ such that for all $(s,t)\in \Delta_T$ with $C\varpi_{\bZ}(s,t)\le 1$, it holds that
$$
|u^{P,\natural}|_{\frac{p}{3}-\textnormal{var};[s,t];H^{m-3}}^{\frac{p}{3}}\le C \left(\sup_{s\le r\le t}|u_r|_{H^{m}} \varpi_{\bZ}(s,t)^{\frac{3}{p}} +\varpi_{\mu}(s,t)\varpi_{\bZ}(s,t)^{\frac{1}{p}}\right).
$$
Moreover, for all $(s,t)\in \Delta_T$ with $C\varpi_{\bZ}(s,t)+C\varpi_{\mu}(s,t)\,\rmd r\le 1$, it holds that
\begin{align*}
|u^{P,\sharp}|_{\frac{p}{2}-\textnormal{var};[s,t];H^{m-2}}^{\frac{p}{2}}&\le  C\left( \varpi_{\mu}(s,t)+ \sup_{s\le r\le t}|u_r|_{H^{m}}\varpi_{\bZ}(s,t)^{\frac{2}{p}}\right)\\
|u|_{p-\textnormal{var};[s,t];H^{m-1}}^p&\le C \left(\varpi_{\mu}(s,t)+ \sup_{s\le r\le t}|u_r|_{H^{m}}\left(\varpi_{\mu}(s,t)^{\frac{1}{p}}+\varpi_{\bZ}(s,t)^{\frac{1}{p}}\right)\right),
\end{align*}
where 
$$
u^{P,\sharp}_{st}:=\delta u_{st} -   P\pounds_{\xi_k}^*u_sZ ^k_{st}=-\int_s^tP(u_r\cdot \nabla u_r)\,\rmd r+P\pounds_{\xi_k}^*P\pounds_{\xi_l}^*u_s\mathbb{Z}^{lk}_{st}+u^{P,\natural}_{st}.
$$
\end{theorem}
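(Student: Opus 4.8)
The plan is to read \eqref{eq:rough_velocity_projected}, in the form of Definition \ref{def:solution_velocity}, as an equation driven by an \emph{unbounded rough driver} on the Sobolev scale $H^{m}\supset H^{m-1}\supset H^{m-2}\supset H^{m-3}$, and to run the Davies--Gubinelli sewing-and-absorption scheme on it. Rewrite the defining expansion \eqref{eq:def_solution_velocity} as
\[
\delta u_{st}=-\mu_{st}+A^{1}_{st}u_{s}+A^{2}_{st}u_{s}+u^{P,\natural}_{st},\qquad \mu_{st}:=\int_{s}^{t}P(u_{r}\cdot\nabla u_{r})\,\rmd r,
\]
with $A^{1}_{st}v:=Z^{k}_{st}\,P\pounds^{*}_{\xi_{k}}v$ and $A^{2}_{st}v:=\mathbb{Z}^{lk}_{st}\,P\pounds^{*}_{\xi_{k}}P\pounds^{*}_{\xi_{l}}v$. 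The first task is to record the analytic data of this driver: (i) the Chen relations $\delta A^{1}_{s\theta t}=0$ and $\delta A^{2}_{s\theta t}=A^{1}_{\theta t}\circ A^{1}_{s\theta}$, immediate from $\delta Z_{s\theta t}=0$, $\delta\mathbb{Z}_{s\theta t}=Z_{s\theta}\otimes Z_{\theta t}$ and operator composition; (ii) continuity on the scale: since $\xi\in(W^{m+2,\infty}_{\sigma})^{K}$, the tame/commutator estimates \eqref{ineq:alg_sob_1} give $\|A^{1}_{st}\|_{H^{m-j}\to H^{m-j-1}}\lesssim|\xi|_{W^{m+1,\infty}}\varpi_{\bZ}(s,t)^{1/p}$ and $\|A^{2}_{st}\|_{H^{m-j}\to H^{m-j-2}}\lesssim|\xi|_{W^{m+1,\infty}}^{2}\varpi_{\bZ}(s,t)^{2/p}$ for $j\in\{0,1,2\}$; (iii) the drift bound: using $P(u\cdot\nabla u)=-P\pounds^{*}_{u}Pu$ from \eqref{eq:covd_Lie} and \eqref{eq:projection_Lie} together with \eqref{ineq:alg_sob_2}, one gets $|\mu_{st}|_{H^{m-j}}\lesssim\varpi_{\mu}(s,t)$ modulo terms $\lesssim(t-s)\sup_{[s,t]}|u|_{H^{m}}^{2}$ --- harmless, since below $\mu$ appears either bare (contributing the announced $\varpi_{\mu}$) or multiplied by a power of $\varpi_{\bZ}$.

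The core is the estimate for $u^{P,\natural}$. Applying $\delta$ to the identity above, using $\delta(\delta u)=0$, additivity of $\mu$, and (i), the quadratic term $A^{1}_{\theta t}A^{1}_{s\theta}u_{s}$ produced when $\delta u_{s\theta}$ is expanded inside $A^{1}_{\theta t}(\delta u_{s\theta})$ is exactly cancelled by $\delta A^{2}_{s\theta t}u_{s}$ --- this is the whole point of iterating the $\rmd\bZ$-integral twice --- leaving
\[
\delta u^{P,\natural}_{s\theta t}=A^{1}_{\theta t}\big(u^{P,\natural}_{s\theta}-\mu_{s\theta}+A^{2}_{s\theta}u_{s}\big)+A^{2}_{\theta t}\big(\delta u_{s\theta}\big).
\]
In $H^{m-3}$, by (ii)--(iii) every term on the right is of order at least $3/p$ in $\varpi_{\bZ}$ --- $A^{1}_{\theta t}A^{2}_{s\theta}u_{s}$ and $A^{2}_{\theta t}(A^{1}_{s\theta}u_{s})$ directly, $A^{1}_{\theta t}\mu_{s\theta}$ and $A^{2}_{\theta t}\mu_{s\theta}$ because $\mu$ carries a factor $(t-s)$, and the rest of even higher order --- hence all are dominated by $\varpi_{2}(s,t)^{3/p}$ for a control $\varpi_{2}$ comparable to the right-hand side of the claim, with the single exception of the self-referential piece $A^{1}_{\theta t}u^{P,\natural}_{s\theta}$, bounded only by $\|A^{1}_{\theta t}\|\,|u^{P,\natural}_{s\theta}|_{H^{m-3}}$. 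I would then apply the sewing lemma (Lemma \ref{lem:sewing}) with $\zeta=p/3$ to $h_{st}:=-\mu_{st}+A^{1}_{st}u_{s}+A^{2}_{st}u_{s}$: since $h-\delta u=-u^{P,\natural}\in C^{p/3-\textnormal{var}}_{2,\textnormal{loc}}$ by Definition \ref{def:solution_velocity}, the uniqueness clause identifies $\Lambda h=-u^{P,\natural}$, so \eqref{ineq:sewing_lemma} bounds $|u^{P,\natural}_{st}|_{H^{m-3}}$ by $C_{p/3}\varpi_{2}(s,t)^{3/p}$ plus $C_{p/3}$ times the self-referential contribution. A standard localization--patching argument closes it: on subintervals where $\varpi_{\bZ}$ is small the self-referential term (which carries a factor $\varpi_{\bZ}(s,t)^{1/p}$) is absorbed into the left-hand side --- this is exactly where the hypothesis $C\varpi_{\bZ}(s,t)\le1$ enters --- after which $[s,t]$ is reassembled by superadditivity of controls, the a priori finiteness of $|u^{P,\natural}|_{p/3-\textnormal{var}}$ on the covering intervals making sure the bootstrap can be started.

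The lower-order estimates descend the scale by one and two steps and are run together over intervals with $C\varpi_{\bZ}(s,t)+C\varpi_{\mu}(s,t)\le1$. Since $u^{P,\sharp}_{st}=\delta u_{st}-A^{1}_{st}u_{s}=u^{P,\natural}_{st}-\mu_{st}+A^{2}_{st}u_{s}$, a Chen computation gives $\delta u^{P,\sharp}_{s\theta t}=A^{1}_{\theta t}(\delta u_{s\theta})$; sewing in $H^{m-2}$ with $\zeta=p/2$ (identifying $\Lambda$ of $\delta u-u^{P,\sharp}$, with $u^{P,\sharp}\in C^{p/2-\textnormal{var}}_{2,\textnormal{loc}}(H^{m-2})$ from the previous step) turns the $H^{m-1}$-increment bound $|\delta u_{s\theta}|_{H^{m-1}}\lesssim\varpi_{0}(s,\theta)^{1/p}$ --- which is precisely the third estimate --- into the bound for $u^{P,\sharp}$, the $\varpi_{\mu}$ contribution arising from the bare drift and the $\sup_{[s,t]}|u|_{H^{m}}\varpi_{\bZ}^{2/p}$ contribution from $A^{2}_{s\theta}u_{s}$ and from the area part of the sewing bound. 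Conversely, $\delta u_{st}=A^{1}_{st}u_{s}+u^{P,\sharp}_{st}$, with $|A^{1}_{st}u_{s}|_{H^{m-1}}\lesssim|\xi|\sup_{[s,t]}|u|_{H^{m}}\varpi_{\bZ}^{1/p}$ and the bound on $u^{P,\sharp}$ just obtained (interpolating, where necessary, between the $H^{m-2}$-smallness of $\delta u$ and its $H^{m}$-boundedness), closes the loop for $|u|_{p-\textnormal{var};[s,t];H^{m-1}}$ by the same absorption; hence the $u^{P,\sharp}$- and $u$-estimates come out simultaneously.

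The step I expect to be the main obstacle is precisely the self-referential occurrence of $u^{P,\natural}$ (and, one level down, of $\delta u$) on the right-hand side of the $\delta$-computations: it cannot be closed by sewing alone, and the absorption removing it is legitimate only because $u^{P,\natural}$ is assumed a priori of finite $p/3$-variation and $\varpi_{\bZ}$ can be made small on the working interval --- which is why the conclusions are local in $\varpi_{\bZ}$, and in $\varpi_{\bZ}+\varpi_{\mu}$ for the lower-order remainders. A secondary, purely bookkeeping difficulty is that $A^{1}$ costs one derivative and $A^{2}$ two, so $u^{P,\natural}$, $u^{P,\sharp}$, $u$ live naturally in $H^{m-3}$, $H^{m-2}$, $H^{m-1}$, and one must never feed the bounded-but-not-small $H^{m}$-increment $\delta u$ into a sewing estimate --- which is what forces the three bounds to be proved together. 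Finally, the Moser estimate in (iii), reducing $|P(u\cdot\nabla u)|$ to $\varpi_{\mu}$ modulo $\sup|u|_{H^{m}}$-controlled remainders via \eqref{ineq:alg_sob_1}--\eqref{ineq:alg_sob_2} and $P(u\cdot\nabla u)=-P\pounds^{*}_{u}Pu$, is routine but is where the explicit constants $C$ and $C(p,d,m,|\xi|_{W^{m+2,\infty}})$ are produced.
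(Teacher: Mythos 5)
Your approach is genuinely different from the paper's. The paper does not work on the velocity equation at all: it first passes to the vorticity formulation (Proposition \ref{prop:vorticity}), which has no projections, then forms a vector-valued system $\Omega=(\partial^{I}\omega)_{|I|\le m-1}$ living in the low-regularity scale $E_{n}=H^{n}(\bbT^{d};A_{d,m-1})$, verifies that the lift of $\pounds_{\xi}$ is an unbounded rough driver there, bounds the drift in $E_{-1}=H^{-1}$ via a commutator--and--integration-by-parts estimate of the form \eqref{ineq:non-linearity_m_est}, cites Theorem \ref{thm:URDRemEst} as a black box, and finally transfers back to $u$ through the Biot--Savart equivalence \eqref{ineq:curl_equiv}. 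Your proposal stays with the velocity equation, reads \eqref{eq:def_solution_velocity} as a URD equation on the high-regularity scale $E_{-n}=H^{m-n}$ with $A^{1}_{st}=Z^{k}_{st}P\pounds^{*}_{\xi_{k}}$, $A^{2}_{st}=\mathbb{Z}^{lk}_{st}P\pounds^{*}_{\xi_{k}}P\pounds^{*}_{\xi_{l}}$, and the drift $\mu_{st}=\int_{s}^{t}P(u_{r}\cdot\nabla u_{r})\,\rmd r$. That setup does, in fact, meet the hypotheses of Theorem \ref{thm:URDRemEst}: the Chen relations hold, $P$ is bounded on every $H^{s}$ so the driver bounds follow from tame estimates, $P_{\le N}$ furnishes a smoothing on the scale, and the drift bound $|\mu_{st}|_{H^{m-1}}\lesssim\int_{s}^{t}|\nabla u_{r}|_{L^{\infty}}|u_{r}|_{H^{m}}\,\rmd r$ follows cleanly from \eqref{ineq:alg_sob_1} plus Poincar\'e ($|u|_{L^{\infty}}\lesssim|\nabla u|_{L^{\infty}}$ on the mean-free torus), so your caveat ``modulo terms $\lesssim(t-s)\sup|u|_{H^{m}}^{2}$'' is both unnecessary and unwelcome --- the stated conclusion has no room for such terms. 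For this particular theorem your route is arguably shorter: the vorticity detour is really forced by the companion results (Theorems \ref{thm:solution_est} and \ref{thm:diff_est}, see Remark \ref{rem:vorticity_no_projection}), where one must square a symmetric transport operator and the projection $P$ destroys that structure; the authors simply chose one uniform formulation.

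The genuine gap is in how you handle the self-referential term. After computing $\delta u^{P,\natural}_{s\theta t}=A^{1}_{\theta t}(u^{P,\natural}_{s\theta}-\mu_{s\theta}+A^{2}_{s\theta}u_{s})+A^{2}_{\theta t}\delta u_{s\theta}$, you propose to bound $A^{1}_{\theta t}u^{P,\natural}_{s\theta}$ in $H^{m-3}$ and ``absorb'' it because it carries a factor $\varpi_{\bZ}(\theta,t)^{1/p}$. But $A^{1}_{\theta t}$ costs one derivative: $|A^{1}_{\theta t}u^{P,\natural}_{s\theta}|_{H^{m-3}}\lesssim\varpi_{\bZ}(\theta,t)^{1/p}\,|u^{P,\natural}_{s\theta}|_{H^{m-2}}$, and the a priori information coming from Definition \ref{def:solution_velocity} is only $u^{P,\natural}\in C^{p/3-\textnormal{var}}_{2,\textnormal{loc}}([0,T];H^{m-3})$. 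Absorption into the left-hand side --- which lives in $H^{m-3}$ --- therefore does not close, no matter how small $\varpi_{\bZ}$ is on the working interval; you would be absorbing an $H^{m-2}$-quantity into an $H^{m-3}$-bound. The correct mechanism is the interpolation/smoothing argument that constitutes the proof of Theorem \ref{thm:URDRemEst}: one splits the offending term using $J^{\eta}$ and $(I-J^{\eta})$, trades regularity for powers of $\eta$ via Definition \ref{def:smooth_op}, and optimizes over $\eta$. Rather than re-proving this, you should simply observe that your setup is an instance of Definition \ref{def:urd_eqn} and invoke Theorem \ref{thm:URDRemEst}, exactly as the paper does; the sewing lemma by itself (even with patching over small intervals) is not enough, and the absorption step as written would not survive scrutiny. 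The same remark applies, one level lower, to the $u^{P,\sharp}$ estimate, where the role of the missing derivative is played by feeding $\delta u_{s\theta}$ --- controlled only in $H^{m-1}$ --- into $A^{1}_{\theta t}$.
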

\begin{proof}
By Proposition \ref{prop:vorticity}, $\omega=\bd u^{\flat} :[0,T]\rightarrow \mathring{H}^{m-1}_{\bd})$ is bounded 
and 
\begin{equation}\label{eq:vorticity_in_apriori}
\omega^{\natural}_{st}:=\bd \flat  u^{P,\natural}=\delta \omega_{st}+ \int_s^t\pounds_{u_r}\omega_r\,\rmd r+  \pounds_{\xi_k}\omega_sZ^k_{st} -\pounds_{\xi_k}\pounds_{\xi_l}\omega_s\mathbb{Z}^{lk}_{st}, \;\;(s,t)\in \Delta_T,
\end{equation}
satisfies 
$\omega^{\natural}\in C^{\frac{p}{3}-\textnormal{var}}_{2,\textnormal{loc}}([0,T];\mathring{H}^{m-4}_{\bd})$. We recall that that for a vector-field $v\in H^{m},$
$$
\pounds_v \omega=\sum_{i<j}(v^{q}\partial_{x^{q}}\omega_{ij}+ \omega_{qj}\partial_{x^{i}}v^q +\omega_{iq}\partial_{x^{j}}v^q  )  e^i\wedge e^j.
$$ 
Moreover,
$$
\omega^{\sharp}_{st}=\bd \flat u^{P,\sharp} =\delta \omega_{st} +   \pounds_{\xi_k}\omega_sZ^k_{st}=-\int_s^t\pounds_{u_r} \omega_r\,\rmd r+\pounds_{\xi_k}\pounds_{\xi_l}\omega_s\mathbb{Z}^{lk}_{st}+\omega^{\natural}_{st}.
$$

The strategy of the proof is as follows. We form a system of equations for $\omega$ and its derivatives up to order $m-1$ of the form Definition \ref{def:urd_eqn} and then apply Theorem  \ref{thm:URDRemEst} and \eqref{ineq:curl_equiv}.  Let $\clI_{m-1}=\{\emptyset\}\cup \cup_{n=1}^{m-1}\{1,\ldots, d\}^n$.  For $I=\emptyset$, let $|I|=0$ and for given $I=(i_1,\ldots, i_n)\in \clI_{m-1}$  define $|I|=n$. For given $I=(i_1,\ldots, i_n)\in \clI_{m-1}$, define $\partial^{I}=\partial_{x^{i_1}}\circ \cdots \circ \partial_{x^{i_n}}$. Let $A_{d,m-1}=\oplus_{n=0}^{m-1}(\Lambda^2(\bbR^{d})^*)^{\otimes n}.$

For given $n\in \{0,1,2,3\}$, let $E_n= H^{n}(\bbT^d;A_{d,m-1})$. It follows that $(E_n)_{n=0}^3$  is a scale with a smoothing (see Section \ref{sec:prelim_func}) $J^{\eta}=P_{\le \lfloor \eta^{-1}\rfloor}$, $\eta \in (0,1)$, in the sense of Definitions \ref{def:scale} and \ref{def:smooth_op}.
For a given function $\Phi \in E_n$, $i,j\in \{1,\ldots, d\}$ with $i<j$, and $I\in \clI_{m-1}$, denote by $\Phi^{I}_{ij}$  its $(I,ij)$-th component.  

Define $\Omega\in L^\infty([0,T]; E_{-0})\cap  C([0,T]; E_{-3})$ by $\Omega^{I}=\partial^I \omega$ and $\Omega^{\natural} \in C^{\frac{p}{3}-\textnormal{var}}_{2,\textnormal{loc}}([0,T];E_{-3}) $ by $\Omega^{I}=\partial^I \omega^{\natural}$ for $I\in \clI_{m-1}$.

Applying the weak derivative operators $\partial^I$  to  \eqref{eq:vorticity_in_apriori}, we find that  for all $(s,t)\in \Delta_T$ and $\Phi\in E_3$,
$$
\langle \Omega^{\natural}_{st}, \Phi \rangle =\langle \delta \Omega_{st}, \Phi\rangle + \langle \mu_{st}, \Phi \rangle+ \langle \Omega_s, [A^{1,*}_{st}+A^{2,*}_{st}]\Phi\rangle ,
$$
where
\begin{itemize}
\item $\langle \mu_{st}, \Phi\rangle :=\sum_{I\in \clI_{m-1}}\int_s^t\left( (\partial^I[\pounds_{u_r}\omega_r] - u_r^q\partial_{x^q} \partial^{I}\omega_r ,\Phi^I)_{L^2} +(u_r^q \partial^{I}\omega_r ,\partial_{x^q}\Phi^I)_{L^2}\right) \,\rmd r$,
\item $A^{1}_{st}\Phi:=\left(\xi_k^q \partial_{x^q}\Phi+ \nu^{m}_k\Phi\right) Z^k_{st}$,
\item $A^{2}_{st}\Phi:=-\left(\left(\xi_k^q \partial_{x^q} + \nu^{m}_k\right)\left(\xi_l^q \partial_{x^q} + \nu^{m}_l\right)\Phi\right)\bbZ^{lk}_{st}$,
\end{itemize}
and where for each $k\in \{1,\ldots, K\}$, $\nu_k^{m} : \bbT^d \rightarrow \clL(A_{d,m-1};A_{d,m-1})$ is implicitly and inductively defined as follows:   for $i,j\in \{1,\ldots, d\}$, 
$$(\nu_k^{m}\Phi)^{ \emptyset}_{ij}=\Phi^{\emptyset}_{qj}\partial_{x^i}\xi_k^{q}+\Phi^{\emptyset}_{iq}\partial_{x^j}\xi_k^{q},$$ and for $I\in \clI_{m-1}$ with $|I|<m-1$,
\begin{align*}
\partial_{x^{l}} [\xi_k^q\partial_{x^q}\Phi +\nu_k^{m} \Phi]^{I}_{ij} &=\xi_k^q\partial_{x^q} \Phi^{(I,l)}_{ij}+ (\partial_{x^{l}}\xi_k^{j})\Phi^{(I,j)}_{ij} +((\partial_{x^l}\nu_k^{m} )\Phi)^{I}_{ij} +(\nu_k^{m} \partial_{x^l}\Phi)^{I}_{ij}\\
&=\xi_k^q\partial_{x^q} \Phi^{(I,l)}_{ij} +(\nu_k^{m}\Phi)^{(I,l)}_{ij}.
\end{align*}
Consequently, the components of the $\nu_k^{m}$ depend only at most $m$ derivatives of $\xi_k$. In order to apply Theorem \ref{thm:URDRemEst}, we need to  show that the pair $\bA=(A^{1}, A^{2})$ is an unbounded rough driver  (see Definition \ref{def:urd})  in the scale $(E_n)$ and that we have control of the non-linearity $\mu$ in $E_{-1}$ in the sense of \eqref{ineq:urd_drift_est}. 

Since $\bZ$ satisfies Chen's relation (i.e., \eqref{eq:chen_rel_Z}),  \eqref{eq:chen_rel_urd} holds for  $\bA$. Moreover, for all $\Phi \in E_3$ and $(s,t)\in \Delta_T$, it holds that 
\begin{align*}
A^{1,*}_{st}\Phi&=(-\xi_k \cdot \nabla \Phi+ \nu^{m,\top}_k\Phi) Z^k_{st},\\
|A^{1,*}_{st}\Phi|_{E_n}& \lesssim_{d,m} |\xi|_{W^{m+n,\infty}} |\Phi|_{E_{n+1}} \varpi_{\bZ}(s,t)^{\frac{1}{p}}, \quad  \forall n \in \{0,2\},\\
|A^{2,*}_{st}\Phi|_{E_n}&\lesssim_{d,m} |\xi|_{W^{m+1+n,\infty}}^2 |\Phi|_{E_{n+2}}\varpi_{\bZ}(s,t)^{\frac{2}{p}},\quad   
\forall n\in \{0,1\}.
\end{align*}
Thus, $\bA$ is an unbounded rough driver in the scale $(E_n)$. 

We will now show that \eqref{ineq:urd_drift_est} holds for $\mu$.  For all $I\in \clI_{m-1}$, we have
$$\partial^I[\pounds_{u}\omega] - u^q\partial_{x^q}
\partial^{I}\omega=\partial^{I}[ u^q\partial_{x^q}\omega]-  u^q\partial_{x^q}\partial^{I}\omega+ \partial^I[\omega_{qj}\partial_{x^{i}}u^q +\omega_{iq}\partial_{x^{j}}u^q].$$
Applying \eqref{ineq:alg_sob_1}, \eqref{ineq:alg_sob_2}, and \eqref{ineq:curl_equiv}, we get  
\begin{equation}\label{ineq:non-linearity_m_est}
\sum_{I\in \clI_{m-1}}|\partial^I[\pounds_{u}\omega] - u^q\partial_{x^q}
\partial^{I}\omega|_{L^2} \lesssim_{d,m}  |\nabla u|_{L^\infty} |\omega|_{H^{m-1}} + |u|_{H^{m}}|\omega|_{L^\infty}\lesssim_{d,m}  |\nabla u|_{L^\infty} |\omega|_{H^{m-1}}.
\end{equation}
Using \eqref{ineq:non-linearity_m_est} and  Poincare's inequality, we find that there exists $C=C(d,m)$ such that for all $(s,t)\in \Delta_T$, 
$$
|\mu_{st}|_{E_{-1}}\le C\int_s^t|\nabla u_r|_{L^\infty} |\omega_r|_{H^{m-1}}\,\rmd r.
$$
Therefore, in the sense of Definition \ref{def:urd_eqn}, $\Omega$ is a solution of 
$$
\rmd \Omega + \mu(\rmd t) + \bA(\rmd t)\Omega= 0.
$$
We complete the proof by invoking  Theorem  \ref{thm:URDRemEst} and \eqref{ineq:curl_equiv}.
\end{proof}

\subsection{Solution estimates}

We will now present the main solution estimates that we will use to prove local existence and establish the BKM blow-up criteria.

\begin{theorem}[Solution estimates]\label{thm:solution_est}
Let $u$ be an  $H^{m}$-solution of \eqref{eq:rough_velocity_projected} on the interval $[0,T]$. Then for all $m'\in \bbN_0$ such that  $1\le m'\le m-2$, there are constants  $C_1=C_1(d,m')$ and $C=C(p,d,m', |\xi|_{W^{m'+2,\infty}})$ depending in an increasing way on $|\xi|_{W^{m'+2,\infty}}$ such that
\begin{align}
\sup_{t\le T}|u_t|_{H^{m'}}& \le \sqrt{2} \exp\left(C \left(\int_{0}^T| \nabla u_r|_{L^\infty}\,\rmd r
+\varpi_{\bZ}(0,T)\right) \right)|u_{0}|_{H^{m'}}\label{ineq:weak_apriori}\\
\sup_{t\le T}|u_t|_{H^{m'}}& \le C_1 (1+|u_{0}|_{H^{m'}})\exp\left(C(1+\varpi_{\bZ}(0,T))\exp \left(C \int_{0}^T|\omega_r|_{L^\infty}\,\rmd r\right)\right).\label{ineq:BKM_apriori}
\end{align}
If $m_*\le m'\le m-2$, then for all $T_*\in [0,T]$ satisfying
$$
e^{C(1+\varpi_{\bZ}(0,T_*))}T_*<|u_{0}|_{H^{m'}}^{-1},
$$
it holds that
\begin{equation}\label{ineq:solution_est_small_time}
\sup_{t\le T_*}|u_t|_{H^{m'}}\le \frac{e^{C(1+\varpi_{\bZ}(0,T_*))}}{|u_{0}|_{H^{m'}}^{-1}-e^{C(1+\varpi_{\bZ}(0,T_*))}T_*}.
\end{equation}
\end{theorem}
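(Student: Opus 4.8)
The plan is to reduce each of the three bounds to an $L^{2}$-type estimate on the vorticity together with all of its spatial derivatives of order $\le m'-1$, obtained by running the vorticity system through the method of unbounded rough drivers, and then to close with a rough Gr\"onwall argument. Throughout I will use the Fourier-side identity $|\omega_{r}|_{H^{m'-1}}\le|u_{r}|_{H^{m'}}\le\sqrt{2}\,|\omega_{r}|_{H^{m'-1}}$, valid for divergence- and mean-free $u_{r}$ with $\omega_{r}=\bd u_{r}^{\flat}$ and contained in \eqref{ineq:curl_equiv}, so that it suffices to control $|\omega_{r}|_{H^{m'-1}}$; in particular $\ln(e+|\omega_{r}|_{H^{m'-1}})\le\ln(e+|u_{r}|_{H^{m'}})$. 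Since the restriction of an $H^{m}$-solution to any $[0,t]\subseteq[0,T]$ is again an $H^{m}$-solution, it is enough to prove each bound on $[0,T]$. By Proposition \ref{prop:vorticity}, $\omega=\bd u^{\flat}$ is bounded in $\mathring{H}^{m-1}_{\bd}$, and because $m\ge m'+2$ I can repeat the construction in the proof of Theorem \ref{thm:remainder_est} with $m$ replaced by $m'$: the family $\Omega:=(\partial^{I}\omega)_{I\in\clI_{m'-1}}$ solves an unbounded-rough-driver equation $\rmd\Omega+\mu(\rmd t)+\bA(\rmd t)\Omega=0$ in the Sobolev scale of that proof (with $\omega$ at regularity $H^{m'-1}$), whose first-order part consists of the divergence-free transports $\xi_{k}\cdot\nabla$ together with bounded zero-order maps $\nu_{k}^{m'}$ of operator norm $\lesssim|\xi|_{W^{m'+1,\infty}}$, and with $|\mu_{st}|\le C\int_{s}^{t}|\nabla u_{r}|_{L^{\infty}}|\omega_{r}|_{H^{m'-1}}\,\rmd r$ by \eqref{ineq:non-linearity_m_est}. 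The restriction $m'\le m-2$ is precisely the two derivatives lost by this scale.

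The heart of the matter is the equation for the square $\Omega\otimes\Omega$. Because $\operatorname{div}\xi_{k}=0$, the operator $\xi_{k}\cdot\nabla$ is skew-adjoint on $L^{2}$, so the first-order part of the tensorized driver governing $\Omega\otimes\Omega$ is again a skew-adjoint transport plus bounded zero-order maps; moreover, pairing the drift against $\Omega$ kills the genuine transport term, $(u^{q}\partial_{q}\partial^{I}\omega,\partial^{I}\omega)_{L^{2}}=\tfrac{1}{2}\int_{\bbT^{d}}u^{q}\partial_{q}|\partial^{I}\omega|^{2}\,dV=0$ by incompressibility, and leaves only the commutator terms, whose $L^{2}$-norms are $\lesssim|\nabla u_{r}|_{L^{\infty}}|\omega_{r}|_{H^{m'-1}}$ by \eqref{ineq:non-linearity_m_est} (which itself rests on \eqref{ineq:alg_sob_1}--\eqref{ineq:alg_sob_2}). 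This skew-symmetry is exactly what keeps the remainder of the squared equation of super-linear order in $\varpi_{\bZ}$, so that Theorem \ref{thm:URDRemEst}, applied to the $\Omega$-equation and to its square, followed by the rough Gr\"onwall lemma of Section \ref{sec:rough_gron} -- which turns a local increment estimate, in which the rough contributions enter only at orders $\varpi_{\bZ}^{2/p}$ and $\varpi_{\bZ}^{3/p}$, into a global exponential by summing over $O(\varpi_{\bZ}(0,T))$ subintervals on which $\varpi_{\bZ}$ lies below the fixed threshold of Theorem \ref{thm:remainder_est} -- yields
\begin{equation*}
\sup_{t\le T}|\Omega_{t}|_{L^{2}}^{2}\le 2\,|\Omega_{0}|_{L^{2}}^{2}\exp\!\left(C\int_{0}^{T}|\nabla u_{r}|_{L^{\infty}}\,\rmd r+C\varpi_{\bZ}(0,T)\right).
\end{equation*}
Taking square roots and reverting to $u$ through the norm equivalence gives \eqref{ineq:weak_apriori}, the $\sqrt{2}$ being the cost of passing from the squared quantity back to the norm.

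For \eqref{ineq:BKM_apriori} I would insert the logarithmic Kato inequality \eqref{ineq:Kato}, $|\nabla u_{r}|_{L^{\infty}}\lesssim\ln(e+|\omega_{r}|_{H^{m'-1}})|\omega_{r}|_{L^{\infty}}\le\ln(e+|u_{r}|_{H^{m'}})|\omega_{r}|_{L^{\infty}}$, into the exponent of \eqref{ineq:weak_apriori} applied on $[0,t]$. Setting $y(t):=\ln(e+\sup_{r\le t}|u_{r}|_{H^{m'}})$, a nondecreasing function bounded below by $1$, this produces $y(t)\le\ln(e+\sqrt{2}\,|u_{0}|_{H^{m'}})+C\varpi_{\bZ}(0,t)+C\int_{0}^{t}y(r)|\omega_{r}|_{L^{\infty}}\,\rmd r$; the classical integral Gr\"onwall inequality then bounds $y(t)$ by a quantity linear in $\ln(e+|u_{0}|_{H^{m'}})$ and in $\varpi_{\bZ}(0,t)$ with coefficient $\exp(C\int_{0}^{t}|\omega_{r}|_{L^{\infty}}\,\rmd r)$, and exponentiating -- using \eqref{ineq:weak_apriori} at regularity $m'=1$ to keep the low-order norm under control when sharpening the $|u_{0}|$-dependence -- yields the double-exponential bound \eqref{ineq:BKM_apriori}. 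Finally, for the short-time bound \eqref{ineq:solution_est_small_time}, the hypothesis $m'\ge m_{*}$ supplies the embedding $H^{m'}\hookrightarrow W^{1,\infty}$, hence $|\nabla u_{r}|_{L^{\infty}}\le C|u_{r}|_{H^{m'}}$; inserting this into the local energy increment turns its drift into $\lesssim\int_{s}^{t}|u_{r}|_{H^{m'}}^{3}\,\rmd r$, i.e., a rough Riccati inequality. Absorbing the $\varpi_{\bZ}$-factors into the prefactor $B:=e^{C(1+\varpi_{\bZ}(0,T_{*}))}$ by the same partitioning step (valid on $[0,T_{*}]$) and comparing $|u_{\cdot}|_{H^{m'}}$ with the maximal solution of $\dot{a}=a^{2}$ started at $a(0)=B|u_{0}|_{H^{m'}}$ -- namely $a(t)=B|u_{0}|_{H^{m'}}/(1-B|u_{0}|_{H^{m'}}t)$, which exists exactly for $t<B^{-1}|u_{0}|_{H^{m'}}^{-1}$ -- reproduces \eqref{ineq:solution_est_small_time} together with its stated range of validity.

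The step I expect to be the main obstacle is the second one: assembling the equation for the tensor square $\Omega\otimes\Omega$ in a form to which Theorem \ref{thm:URDRemEst} applies, and checking that the tensorized object is a genuine unbounded rough driver whose transport component has vanishing symmetric part -- so that its remainder is of super-linear order in $\varpi_{\bZ}$ -- with the decisive algebra being the cancellations furnished by $\operatorname{div}\xi_{k}=0$ and $\operatorname{div}u=0$. Everything after that -- the norm bookkeeping through \eqref{ineq:curl_equiv}, the commutator bounds \eqref{ineq:alg_sob_1}--\eqref{ineq:alg_sob_2} already established in the proof of Theorem \ref{thm:remainder_est}, and the ordinary Gr\"onwall and Riccati comparisons -- is routine.
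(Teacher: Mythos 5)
Your proposal follows the paper's strategy exactly: pass to the vorticity $\Omega=(\partial^{I}\omega)_{I\in\clI_{m'-1}}$ via the norm equivalence \eqref{ineq:curl_equiv}, form the equation for the tensor square $\Omega\otimes\Omega$, apply Theorem~\ref{thm:URDRemEst} plus the rough Gr\"onwall Lemma~\ref{lem:RoughGronwall} to deduce \eqref{ineq:weak_apriori}, then upgrade via the logarithmic Kato inequality \eqref{ineq:Kato} and a classical Gr\"onwall for \eqref{ineq:BKM_apriori}, and use the Sobolev embedding ($m'\ge m_*$) to get a Riccati comparison for \eqref{ineq:solution_est_small_time}. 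The $\sqrt2$ prefactor, the $\ln(e+\cdot)$ substitution, and the $y^{-1/2}$ / maximal-solution comparison all match.

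There is, however, one substantive omission in the step you yourself flag as the obstacle. You attribute the feasibility of writing $\Omega\otimes\Omega$ as an unbounded rough driver equation to the cancellations coming from $\operatorname{div}\xi_k=0$ and $\operatorname{div}u=0$. Those conditions are what make the final pairing against $\boldsymbol{I}$ close, but they are \emph{not} what makes the squared equation fit the rough-driver template. The decisive algebra is the geometricity of $\bZ$: expanding $\delta\Omega_{st}\otimes\delta\Omega_{st}$ produces a leading term $L_{\xi_k}\Omega_s\otimes L_{\xi_l}\Omega_s\,Z^k_{st}Z^l_{st}$, and only because $\bZ$ is geometric (so $Z^k_{st}Z^l_{st}=\bbZ^{lk}_{st}+\bbZ^{kl}_{st}$, the paper's identity \eqref{eq:geometricity_square}) can this be folded into the second-level component $\Gamma^2\bbZ^{lk}$ of a bona fide unbounded rough driver. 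For a non-geometric rough path the $ZZ$ contribution would have no home in Definition~\ref{def:urd} and the construction would break down. You need this observation before any skew-adjointness or divergence-free cancellation can be invoked. A second, more minor slip: Lemma~\ref{lem:RoughGronwall} requires the rough term in the increment inequality at order $\varpi_{\bZ}(s,t)^{1/p}$ (as in the paper's \eqref{ineq:solution_est_pregron}), not at orders $\varpi_{\bZ}^{2/p}$ or $\varpi_{\bZ}^{3/p}$; those exponents live in the remainder estimate of Theorem~\ref{thm:URDRemEst} and get subsumed into a $\varpi_{\bZ}^{1/p}$ term when $\varpi_{\bZ}$ is below the threshold.
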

\begin{proof}
Using the  notation of the proof of Theorem \ref{thm:remainder_est}, we let 
$$
\Omega= (\partial^I\omega)_{I\in \clI_{m'-1}}\in L^\infty([0,T]; H^2(\bbT^d;A_{d,m'-1})\cap C([0,T]; H^{-1}(\bbT^d;A_{d,m'-1})),
$$
$$
\Omega^{\natural}=(\partial^I\omega^{\natural})_{I\in \clI_{m'-1}}\in C^{\frac{p}{3}-\textnormal{var}}_{2,\textnormal{loc}}([0,T];H^{-1}(\bbT^d;A_{d,m'-1})),
$$
and 
$$
\Omega^{\sharp}=(\partial^{I}\omega^{\sharp})_{I\in \clI_{m'-1}}\in C^{\frac{p}{2}-\textnormal{var}}_{2,\textnormal{loc}}([0,T];L^2(\bbT^d;A_{d,m'-1})).
$$
Here, we have used the worst possible regularity at $m'=m-2$.

For given $n\in \{0,1,2,3\}$, let $E_n= W^{n,\infty}(\bbT^d;A_{d,m'-1}\otimes A_{d,m'-1})$.  Let $\rho:\bbR^d\rightarrow \bbR$ denote a non-negative radially symmetric function  with support in the unit ball that integrates to one. For $\eta \in (0,1)$, let $\rho_{\eta}=\eta^{-d}\rho(\frac{\cdot}{\eta})$ and define the smoothing operator $J^{\eta}: E_0\rightarrow C^\infty(\bbT^d;A_{d,m'-1}\otimes A_{d,m'-1})$ by $J^{\eta}\Phi=\Phi \ast \rho_{\eta}=\int_{\bbR^d}\rho_{\eta}(y)\Phi(\cdot-y)dV$. 
It follows that $(E_n)_{n=0}^3$  is a scale with smoothing (see Section \ref{sec:prelim_func}) $J^{\eta}=\rho_{\eta}\ast $, $\eta \in (0,1)$, in the sense of Definitions \ref{def:scale} and \ref{def:smooth_op}. For $\Phi \in E_3$ and $k\in \{1,\ldots, K\}$, denote
$$
L_{\xi_k}\Phi=\xi_k^q\partial_{x^q} \Phi + \nu_k^{m'}\Phi ,
$$
where $\nu_k^{m'}$ is defined as in the proof of Theorem \ref{thm:remainder_est}.

In order to obtain the desired estimates, we will proceed as follows. First, we will write down an unbounded rough driver equation (Definition \ref{def:urd_eqn})  for the bounded path 
$
\Omega^{\otimes 2} = \Omega \otimes \Omega: [0,T]\rightarrow E_{-0}.
$
Second, we will apply Theorem \ref{thm:URDRemEst} to obtain a bound on the associated remainder term $\Omega^{\otimes 2,\natural}$. In the final step, we will test against a $\Phi=\boldsymbol{I}$ such that $\langle \Omega\otimes \Omega, \boldsymbol{I}\rangle=|\omega|^2_{H^{m'-1}}$ and apply rough Gronwall's lemma (i.e., Lemma \ref{lem:RoughGronwall}) to obtain two solution estimates; one of which is used for local existence and other of which is used to derive the BKM blow-up criterion. 

For arbitrarily given $M_1,M_2\in A_{d,m'-1}$, denote $M_1\hat{\otimes}M_2=2^{-1}(M_1\otimes M_2+ M_2\otimes M_1)$. 
Note that since $\bZ$ is geometric, we have that for all $k,l\in \{1,\ldots, K\}$,
\begin{equation}\label{eq:geometricity_square}
L_{\xi_k}\Omega_s \otimes L_{\xi_l}\Omega_s Z^k_{st} Z^l_{st}=L_{\xi_k}\Omega_s \otimes L_{\xi_l}\Omega_s ( \bbZ^{lk}_{st} +\bbZ^{kl}_{st})=2L_{\xi_k} \Omega_s  \hat{\otimes}L_{\xi_l}  \Omega_s \bbZ^{lk}_{st}.
\end{equation} 

We will need the following fact. The pointwise tensor product of smooth functions extends canonically to a continuous bi-linear map
$$
\otimes : H^1(\bbT^d;A_{d,m'-1})\times  H^{-1}(\bbT^d;A_{d,m'-1})\rightarrow  E_{-1}.
$$
Moreover, for all $f\in H^1(\bbT^d;A_{d,m'-1})$, $g\in H^{-1}(\bbT^d;A_{d,m'-1})$, and $\Phi \in E_1$,
\begin{equation}\label{eq:evaluation}
	\langle f \otimes g ,\Phi \rangle :=\langle g , (f,\Phi)_{A_{d,m'-1}}\rangle.
\end{equation}
Applying \eqref{eq:evaluation} and  \eqref{eq:geometricity_square}, we find that for all $\Phi \in E_1$ and   $(s,t)\in \Delta_T,$
\begin{align}
\langle \delta\Omega^{\otimes 2}_{st}, \Phi\rangle &=2\langle \Omega_s\hat{\otimes}\delta \Omega_{st},\Phi\rangle  +\langle \delta \Omega_{st} \otimes \delta\Omega_{st}, \Phi\rangle \notag \\
&=-\langle \clM_{st},\Phi \rangle  -2\langle \Omega_s\hat{\otimes}L_{\xi_k}\Omega_s,\Phi \rangle Z^k_{st} \notag \\
&\quad +2\langle \Omega_s\hat{\otimes}L_{\xi_k}L_{\xi_l}\Omega_s + L_{\xi_k}\omega_s\hat{\otimes} L_{\xi_l}\omega_s, \Phi\rangle \bbZ_{st}^{lk}+ \langle \Omega^{\otimes 2,\natural}_{st}, \Phi \rangle \notag  \\
& = -\langle \clM_{st},\Phi \rangle  - \langle \Omega^{\otimes 2}_s, [\Gamma^{1,*}_{st}+\Gamma^{2,*}_{st}]\Phi \rangle +\langle \Omega^{\otimes 2,\natural}_{st}, \Phi \rangle, \label{eq:vorticity_squared}
\end{align}
where
\begin{align*}
\langle\clM_{st},\Phi\rangle& :=\sum_{I,J\in \clI_{m'-1}}\int_s^t \langle \partial^{I} [\pounds_{u_r}\omega_r] \otimes \partial^{J} \omega_r+\partial^{I} \omega_r \otimes \partial^{J} [\pounds_{u_r}\omega_r], \Phi^{IJ}\rangle \, \rmd r, \\
\Gamma^{1}\Phi &:= (\xi_k^q\partial_{x^q} \Phi+ 2( \nu^{m'}_k \hat{\otimes} \operatorname{id} )\Phi)Z^k_{st},\\
\Gamma^{2}\Phi &:= -(\xi_k^q\partial_{x^q}+ 2 \nu^{m'}_k \hat{\otimes}  \operatorname{id} )(\xi_l^q\partial_{x^q} + 2 \nu^{m'}_l \hat{\otimes}  \operatorname{id} )\Phi \bbZ^{lk}_{st},\\
\langle \Omega^{\otimes 2, \natural}_{st}, \Phi\rangle& :=2\langle \Omega_s\hat{\otimes}\Omega^{\natural}_{st}, \Phi\rangle  +\sum_{I,J\in \clI_{m'-1}}\int_s^t \langle \partial^{I} [\pounds_{u_r}\omega_r] \otimes \partial^{J} \delta \omega_{rs}+\partial^{I} \delta\omega_{rs} \otimes \partial^{J} [\pounds_{u_r}\omega_r], \Phi^{IJ}\rangle \, \rmd r \\
&\qquad +\langle \Omega^{\sharp}_{st} \otimes \delta \Omega_{st}, \Phi\rangle +\langle L_{\xi_k}\Omega_sZ^k_{st}\otimes \Omega_{st}^{\sharp}, \Phi\rangle.
\end{align*} 
Here, $2\nu^{m'}_k\hat{\otimes } \operatorname{id}: \bbT^d\rightarrow \clL(A_{d,m'-1}\otimes A_{d,m'-1};A_{d,m'-1}\otimes A_{d,m'-1})$ is the unique linear map induced by the bi-linear map $2\nu^{m'}_k(x)\hat{\otimes } \operatorname{id}:A_{d,m'-1}\times A_{d,m'-1}\rightarrow A_{d,m'-1}\otimes A_{d,m'-1}$, $x\in \bbT^d,$ defined for given $(M_1,M_2)\in A_{d,m'-1}\times A_{d,m'-1}$   by
$$
(2 \nu^{m'}_k(x) \hat{\otimes}  \operatorname{id})(M_1,M_2)=\nu^{m'}_k(x) M_1\otimes M_2 + M_1\otimes \nu^{m'}_k(x)M_2.
$$

We will now obtain control over the drift.
Proceeding as in \eqref{ineq:non-linearity_m_est}, we find that
\begin{equation}\label{ineq:non-linearity_mm3_est}
\sum_{I\in \clI_{m'-1}}|\partial^I[\pounds_{u}\omega] - u^q\partial_{x^q}\partial^{I}\omega|_{L^2} \lesssim_{d, m}|\nabla u|_{L^\infty} |\omega|_{H^{m'-1}}.
\end{equation}
Using \eqref{ineq:non-linearity_mm3_est} and  Poincare's inequality, we find that there exists a constant $C=C(d,m)$ such that for all $\Phi \in E_1$ and   $(s,t)\in \Delta_T,$
\begin{align*}
\langle\clM_{st},\Phi\rangle&=\sum_{I,J\in \clI_{m'-1}}\int_s^t \langle \left(\partial^{I} [\pounds_{u_r}\omega_r] - u^q\partial_{x^q}\partial^I \omega_r \right)\otimes \partial^J \omega_r, \Phi^{IJ}\rangle \, \rmd r \\
&\qquad +\sum_{I,J\in \clI_{m'-1}}\int_s^t \langle \partial^I \omega_r \otimes \left(\partial^{J} [\pounds_{u_r}\omega_r] - u^q_r\partial_{x^q} \partial^J\omega_r \right), \Phi^{IJ}\rangle \, \rmd r \\
&\qquad - \sum_{I,J\in \clI_{m'-1}}\int_s^t \langle u_r^q(\partial^I\omega_r \otimes \partial^J\omega_r),\partial_{x^q} \Phi^{IJ}\rangle \, \rmd r\\
&\le  C|\Phi|_{E_1}\int_s^t | \nabla u_r|_{L^{\infty}}|\omega_r|_{H^{m'-1}}^2\,\rmd r.
\end{align*}

\begin{claim}[Verification of `squared' remainder]
We have
$\Omega^{\otimes 2, \natural}\in C^{\frac{p}{3}-\textnormal{var}}_{2, \textnormal{loc}}([0,T];E_{-1})$.
\end{claim}
\begin{proof}
We will proceed by estimating term-by-term. By the embeddings  $H^{2} \otimes H^{_1}\hookrightarrow E_{-1}$ and $L^1\otimes L^1\hookrightarrow L^2$, we find that there is a constant $C=C(d,m',|\xi|_{W^{m',\infty}})$ such that for all $(s,t)\in \Delta_T$,
\begin{align*}
|\Omega_s\hat{\otimes}\Omega^{\natural}_{st}|_{E_{-1}} &\le  \sup_{ r\le T}|\Omega_r^{m'}|_{H^2}  |\Omega^{\natural}_{st}|_{\frac{p}{3}-\textnormal{var};[s,t];H^{-1}}^{\frac{p}{3}},\\
|\Omega^{\sharp}_{st} \otimes \delta \Omega_{st}|_{L^1}& \le|\Omega^{\sharp}_{st}|_{\frac{p}{2}-\textnormal{var};[s,t];L^2}^{\frac{p}{2}} |\Omega_{st}|_{p-\textnormal{var};[s,t];L^2}^{p} ,\\
|\pounds_{\xi_k}\Omega_sZ^k_{st}\otimes \Omega_{st}^{\sharp}|_{L^1} &\le  C \sup_{ r\le T}|\Omega_r|_{H^1}\varpi_{\bZ}(s,t)^{\frac{1}{p}} |\Omega_{st}^{\sharp}|_{\frac{p}{2}-\textnormal{var};[s,t];L^2}^{\frac{p}{2}},
\end{align*}
Upon using the decomposition
\begin{align*}
\langle \partial^{I} [\pounds_{u_r}\omega_r] \otimes \partial^{J} \delta \omega_{rs}, \Phi^{IJ}\rangle &=\langle \left(\partial^{I} [\pounds_{u_r}\omega_r] -u_r^q\partial_{x^q} \partial^I\omega_r\right)\otimes \partial^{J} \delta \omega_{rs}, \Phi^{IJ}\rangle \\
&\quad -\langle u^q_r\partial^I\omega_r\otimes \partial_{x^q}\partial^{J} \delta \omega_{rs}, \Phi^{IJ}\rangle-\langle u^q_r\partial^I\omega_r\otimes \partial^{J} \delta \omega_{rs},  \partial_{x^q}\Phi^{IJ}\rangle
\end{align*}
in \eqref{ineq:non-linearity_mm3_est} and applying Poincare's inequality, we find that for all $\Phi\in E_1$ and $(s,t)\in \Delta_T$,
\begin{gather*}
\sum_{I,J\in \clI_{m'-1}}\int_s^t \langle \partial^{I} [\pounds_{u_r}\omega_r] \otimes \partial^{J} \delta \omega_{rs}+\partial^{I} \delta\omega_{rs} \otimes \partial^{J} [\pounds_{u_r}\omega_r], \Phi^{IJ}\rangle \, \rmd r\\
\lesssim_{d, m} |\Phi|_{E_1}|\omega|_{p-\textnormal{var};[s,t];H^{m'}}\int_s^t|\nabla u_r|_{L^\infty}|\omega_r|_{H^{m'-1}} \,\rmd r .
\end{gather*}   
We then complete the proof by using the property that a product of  powers of regular controls whose powers sum to greater than or equal to one is a control (see, e.g., \cite{FrVi10}[Ex.\ 1.9].
\end{proof}

One can easily check that the pair $\boldsymbol{\Gamma}=(\Gamma^{1}, \Gamma^{2})$ is an unbounded rough driver in the scale $(E_n)$ and that   \eqref{ineq:urd_est} holds with  $\varpi_{\boldsymbol{\Gamma}}(s,t):= C \varpi_{\bZ}(s,t)$, $(s,t)\in \Delta_T$, for a given  constant $C=C(d,m,|\xi|_{W^{m'+2,\infty}})$ that depends in an increasing way on $|\xi|_{W^{m'+2}}$.  Therefore, $\Omega^{\otimes 2}$ is a solution of 
$$
\rmd \Omega^{\otimes 2} + \clM(\rmd t) + \boldsymbol{\Gamma}(\rmd t) \Omega^{\otimes 2}= 0
$$
in the sense of Definition \ref{def:urd_eqn}. Thus, by Theorem  \ref{thm:URDRemEst}, there is a constant $C=C(p,d,m', |\xi|_{W^{m'+2,\infty}})$ depending  in an increasing way on $|\xi|_{W^{m'+2,\infty}}$ and a constant $L=L(p)$ such that for all $(s,t)\in \Delta_T$ with $\varpi_{\boldsymbol{\Gamma}}(s,t)\le L$ it holds that
\begin{equation}\label{ineq:remainder_square_est}
|\Omega^{\otimes 2,\natural}|_{\frac{p}{3}-\textnormal{var};[s,t];E_{-3}}^{\frac{p}{3}}\le C \left(\sup_{s\le r\le t}|\omega_r|_{H^{m'-1}}^2 \varpi_{\bZ}(s,t)^{\frac{3}{p}} + \int_s^t|\nabla u_r|_{L^\infty} |\omega_r|_{H^{m'-1}}^2\,\rmd r\varpi_{\bZ}(s,t)^{\frac{1}{p}}\right).
\end{equation}

Let $\boldsymbol{I}\in A_{d,m'-1}\otimes A_{d,m'-1}$ be such that $\langle \Omega\otimes \Omega, \boldsymbol{I}\rangle=|\omega|^2_{H^{m'-1}}$.
Henceforth, let $C=C(p,d,m', |\xi|_{W^{m'+2,\infty}})$ denote a constant which increases with $|\xi|_{W^{m'+2,\infty}}$ and, thus, may change from line to line. Letting $\Phi = \boldsymbol{I}$ in  \eqref{eq:vorticity_squared}, we find that for all $(s,t)\in \Delta_T$,
$$
\delta (|\omega|_{H^{m'-1}}^2)_{st}= -2\sum_{I\in \clI_{m'-1}}\int_s^t \langle \partial^{I} [\pounds_{u_r}\omega_r] , \partial^{I} \omega_r\rangle  \, \rmd r  +\langle \Omega_s^{\otimes 2}, [\Gamma^{1,*}_{st}+\Gamma^{2,*}_{st}]\boldsymbol{I} \rangle  + \langle \Omega^{\otimes 2,\natural}_{st},\boldsymbol{I}\rangle.
$$
It follows from \eqref{ineq:remainder_square_est} and the fact that $\boldsymbol{\Gamma}$ is an unbounded rough driver that for all $(s,t)\in \Delta_T$ with $\varpi_{\boldsymbol{\Gamma}}(s,t)\le L\wedge 1$, 
\begin{equation}\label{ineq:solution_est_pregron}
\delta (|\omega|_{H^{m'-1}}^2)_{st}\le  C\left(\int_s^t | \nabla u_r|_{L^\infty}|\omega_r|_{H^{m'-1}}^2 \, \rmd r
+ \sup_{s\le r\le t}|\omega_r|_{H^{m'-1}}^2 \varpi_{\bZ}(s,t)^{\frac{1}{p}}\right).
\end{equation}
Upon applying rough Gronwall's lemma (i.e., Lemma \ref{lem:RoughGronwall}) and \eqref{ineq:curl_equiv}, we obtain \eqref{ineq:weak_apriori}. For  $t\in [0,T]$, define $y_t =\ln( e + |u_t|_{H^{m'}})$. By  virtue of the inequality \eqref{ineq:Kato} and \eqref{ineq:weak_apriori}, we have that for all $t\in [0,T]$,
$$
y_t\le \ln (\sqrt{2}y_{0}) +C \varpi_{\bZ}(0,t)+ C \int_{0}^t|\omega_r|_{L^\infty}y_r \,\rmd r.
$$
By using Gronwall's inequality, we find that  for all $t\in [0,T]$, 
$$
y_t \le\left( \ln (\sqrt{2}y_0) +C \varpi_{\bZ}(0,t)\right)\exp \left(C  \int_{0}^t|\omega_r|_{L^\infty}\,\rmd r\right),
$$
and hence \eqref{ineq:BKM_apriori} holds. 

Returning to \eqref{ineq:solution_est_pregron} and applying the Sobolev embedding and \eqref{ineq:curl_equiv} with $m'>m_*$, we find that  for all $(s,t)\in \Delta_T$ with $\varpi_{\boldsymbol{\Gamma}}(s,t)\le L\wedge 1$, 
$$
\delta (|\omega|_{H^{m'-1}}^2)_{st}\le  C\left(\int_s^t |\omega_r|_{H^{m'-1}}^3 \, \rmd r
+ \sup_{s\le r\le t}|\omega_r|_{H^{m'-1}}^2 \varpi_{\bZ}(s,t)^{\frac{1}{p}}\right).
$$
Applying rough Gronwall's lemma implies that, for all  $t\in [0,T]$ and $t'>t$,
$$
|\omega_t|_{H^{m'-1}}^2 \le e^{C(1+\varpi_{\bZ}(0,t'))}\left(|\omega_{0}|_{H^{m'-1}}^2+  \int_{0}^t |\omega_r|_{H^{m'-1}}^3 \, \rmd r\right)=:y_t.
$$
It follows that for all  $t\in [0,T]$ and $t'>t$, one has
$$
\frac{\rmd}{\rmd t}y^{-\frac{1}{2}}_t=-2^{-1}e^{C(1+\varpi_{\bZ}(0,t'))} \left(y_t^{-\frac{1}{2}}|\omega_r|_{H^{m'-1}}\right)^3\ge  -2^{-1}e^{C(1+\varpi_{\bZ}(0,t'))},
$$
which implies
$$
y^{-\frac{1}{2}}_t\ge \exp^{-\frac{C}{2}(1+\varpi_{\bZ}(0,t'))}|\omega_{0}|_{H^{m'-1}}^{-1}-2^{-1}e^{C(1+\varpi_{\bZ}(0,t')} t.
$$
Therefore, upon letting $t'\downarrow t$ and applying \eqref{ineq:curl_equiv}, we obtain \eqref{ineq:solution_est_small_time}.
\end{proof}
\begin{remark}\label{rem:double_variable}
In  \cite{BaGu15},  the method of doubling of variables was used to derive solution estimates and prove uniqueness for linear rough transport equations. The remainder term for linear rough transport equations can only be expected to belong  to $H^{-3}$ if its initial condition belongs to $L^2$. Thus, more care is needed to derive solution estimates and uniqueness since it is not possible, at least directly, to test the equation against the solution. In our setting, we can avoid doubling variables and obtain solution estimates by smoothing (e.g., the path an initial condition and  $\xi$) at the expense of having to assume $\xi$ is slightly more regular whenever continuity in time in the highest norm is needed. In particular, we prove uniqueness by considering differences only in the $L^2$-norm and not the highest norm.
\end{remark}

\subsection{Difference estimates and uniqueness}
The following theorem establishes a priori estimates of the difference of $H^m$-solutions in the $H^1$-norm and $H^m$-norm.

\begin{theorem}[Difference estimates and uniqueness]\label{thm:diff_est}
Let $u^1$ and $u^2$ be  $H^{m}$-solutions of \eqref{eq:rough_velocity_projected} on the interval $[0,T]$  with the same data $(\bZ,\xi)\in C^{p-\textnormal{var}}_g(\bbR_+;\bbR^K)\times (W^{m+2,\infty})^K$ starting from $u^1_0$ and $u^2_0$, respectively. 
Then there is a constant $C=C(p,d, |\xi|_{W^{3,\infty}})$ such that 
\begin{equation}\label{ineq:diff_est_H1}
\sup_{t\le T}|u_t^1-u_t^2|_{H^1} \le\sqrt{2} \exp\left(C\left(\int_{0}^T
\left( | u_r^1|_{H^{m_*}}+ | u_r^2|_{H^{m_*}}\right) \,\rmd r+\varpi_{\bZ}(0,T) \right)\right)|u_0^1-u_0^2|_{H^1}. 
\end{equation}
If $m_*\le m'\le m-2$, then there is a constant $C=C(p,d, m', |\xi|_{W^{m'+2,\infty}})$ such that 
\begin{equation}\label{ineq:diff_est_highest}
\sup_{t\le T}|u_t^1-u^2_t|_{H^{m'}}\le \sqrt{q_{m'}(T)} |u_{0}^1-u^2_{0}|_{H^{m'}}+ Tq_{m'}(T)|u_{0}^1-u^2_{0}|_{H^{m_*-1}}|u_{0}^1|_{H^{m'+1}}.
\end{equation}
where
$$
q_{m'}(T):=\sqrt{2} \exp\left( C\left(\int_{0}^{T}
\left( |u_r^1|_{H^{m_*}} + |u_r^2|_{H^{m'}} \right) \,\rmd r+\varpi_{\bZ}(0,T) \right)\right).
$$
\end{theorem}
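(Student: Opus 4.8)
The plan is to transfer the a priori estimates of Theorem~\ref{thm:solution_est} to the difference. Writing $v=u^1-u^2$, $\omega^i=\bd(u^i)^\flat$ and $\theta=\bd v^\flat=\omega^1-\omega^2$, subtraction of the two velocity equations shows that $v$ solves the projected linear equation with drift $P(u^1\cdot\nabla v)+P(v\cdot\nabla u^2)$; equivalently, by Proposition~\ref{prop:vorticity} and $\pounds_{u^1}\omega^1-\pounds_{u^2}\omega^2=\pounds_{u^1}\theta+\pounds_{v}\omega^2$, the vorticity difference solves, in the sense of Definition~\ref{def:urd_eqn},
\begin{equation*}
\rmd\theta+\pounds_{u^1}\theta\,\rmd t+\pounds_v\omega^2\,\rmd t+\pounds_{\xi_k}\theta\,\rmd\bZ_t^k=0,\qquad v=\operatorname{BS}\theta,
\end{equation*}
with exactly the same rough driver as in the vorticity formulation \eqref{eq:vorticity_two_form_2}. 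Since $v$ is mean- and divergence-free, \eqref{ineq:curl_equiv} together with the elementary Fourier bounds $|v|_{L^2}\lesssim|\nabla v|_{L^2}\simeq|\theta|_{L^2}$ reduces \eqref{ineq:diff_est_H1} to an $L^2$-bound for $\theta$ and \eqref{ineq:diff_est_highest} to an $H^{m'-1}$-bound for $\theta$. Uniqueness then follows from \eqref{ineq:diff_est_H1} by taking $u_0^1=u_0^2$.

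For \eqref{ineq:diff_est_H1} I would run the proof of Theorem~\ref{thm:solution_est} at the bottom level $m'=1$ for the bounded path $\theta\otimes\theta$ taking values in the dual of $W^{0,\infty}(\bbT^d;A_{d,0}\otimes A_{d,0})$: derive an unbounded rough driver equation $\rmd(\theta\otimes\theta)+\clM(\rmd t)+\boldsymbol{\Gamma}(\rmd t)(\theta\otimes\theta)=0$ in the scale $E_n=W^{n,\infty}$ with $\boldsymbol{\Gamma}$ \emph{identical} to the driver of Theorem~\ref{thm:solution_est} and $\clM$ additionally carrying the source $\pounds_v\omega^2$; bound the remainder via Theorem~\ref{thm:URDRemEst}; test against the trace tensor $\boldsymbol{I}$ (so $\langle\theta\otimes\theta,\boldsymbol{I}\rangle=|\theta|_{L^2}^2$); and close with rough Gronwall (Lemma~\ref{lem:RoughGronwall}) and \eqref{ineq:curl_equiv}. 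The $\pounds_{u^1}\theta$-part is handled as the nonlinearity of Theorem~\ref{thm:solution_est}: the top-order transport term vanishes against $\theta$ by $\operatorname{div} u^1=0$, leaving a contribution $\lesssim|\nabla u^1|_{L^\infty}|\theta|_{L^2}^2\lesssim|u^1|_{H^{m_*}}|\theta|_{L^2}^2$. The genuinely new point is the term $\langle\pounds_v\omega^2,\theta\rangle_{L^2}$, for which soft product estimates would require $|\nabla^2u^2|_{L^\infty}$, unavailable when $m=m_*$. Instead I would use $\theta=\bd v^\flat$ and antisymmetry to rewrite $\langle v\cdot\nabla\omega^2,\theta\rangle_{L^2}$ as an integral of schematic form $\int v\,(\nabla\omega^2)\,(\nabla v)\,\rmd V$, integrate by parts (using $\operatorname{div} v=\operatorname{div} u^2=0$ and $\delta\omega^2=-\Delta(u^2)^\flat$), and thereby reduce it, up to pieces bounded by $|\omega^2|_{L^\infty}|\nabla v|_{L^2}^2$, to $\langle v\otimes v,\nabla\Delta u^2\rangle$; a scaling-sharp three-factor H\"older estimate ($v\in\dot H^1$ embedded in $L^{p}$ twice and $\nabla^3u^2\in H^{m_*-3}$ embedded in $L^{p'}$) then gives $|\langle\pounds_v\omega^2,\theta\rangle_{L^2}|\lesssim|u^2|_{H^{m_*}}|\theta|_{L^2}^2$ in every dimension; the lower-order pieces of $\pounds_v\omega^2$ are directly $\lesssim|\omega^2|_{L^\infty}|\nabla v|_{L^2}|\theta|_{L^2}$. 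One must also check that this source contributes only \emph{quadratically} to the remainder, which amounts to the same manipulation applied to the cross-time pairings $(\bd v_s^\flat,\pounds_{v_r}\omega^2_r)_{L^2}\lesssim(|u^1_r|_{H^{m_*}}+|u^2_r|_{H^{m_*}})|\theta_s|_{L^2}|\theta_r|_{L^2}$; this is essential so that rough Gronwall closes with no inhomogeneous term.

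For \eqref{ineq:diff_est_highest} with $m_*\le m'\le m-2$ I would repeat the construction with $\Theta=(\partial^I\theta)_{I\in\clI_{m'-1}}$ in the scale $E_n=W^{n,\infty}(\bbT^d;A_{d,m'-1}\otimes A_{d,m'-1})$, exactly as in Theorem~\ref{thm:solution_est}. Testing $\Theta\otimes\Theta$ against $\boldsymbol{I}$, the increment of $|\theta|_{H^{m'-1}}^2$ is controlled by $-2\sum_I\int_s^t(\partial^I[\pounds_{u^1}\theta+\pounds_v\omega^2],\partial^I\theta)_{L^2}\,\rmd r$ plus the usual rough terms. Using the commutator estimates \eqref{ineq:alg_sob_1}--\eqref{ineq:alg_sob_2}, the identity $\delta\theta=-\Delta v^\flat$ and the Cartan form $\pounds_v\omega^i=\bd[\iota_v\omega^i]$ (so that $(\partial^I\pounds_v\omega^i,\partial^I\theta)_{L^2}=(\nabla\partial^I[\iota_v\omega^i],\nabla\partial^Iv^\flat)_{L^2}$), and writing $\pounds_v\omega^2=\pounds_v\omega^1-\pounds_v\theta$, one bounds this drift by $C(|u^1|_{H^{m_*}}+|u^2|_{H^{m'}})|\theta|_{H^{m'-1}}^2+C|u^1|_{H^{m'+1}}|v|_{H^{m_*-1}}|\theta|_{H^{m'-1}}$, the single non-quadratic term coming from the contraction $|v|_{L^\infty}|\omega^1|_{H^{m'}}$ in $|\nabla\partial^I[\iota_v\omega^1]|_{L^2}$ and using $|v|_{L^\infty}\lesssim|v|_{H^{m_*-1}}$ (valid since $m_*-1>d/2$). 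Rough Gronwall then yields
\begin{equation*}
\sup_{t\le T}|\theta_t|_{H^{m'-1}}^2\le q_{m'}(T)^2|\theta_0|_{H^{m'-1}}^2+q_{m'}(T)^2\Big(\int_0^T|u^1_r|_{H^{m'+1}}|v_r|_{H^{m_*-1}}\,\rmd r\Big)\sup_{t\le T}|\theta_t|_{H^{m'-1}},
\end{equation*}
and absorbing the last factor reduces matters to the integral: one controls $|u^1_r|_{H^{m'+1}}$ by the propagation estimate \eqref{ineq:weak_apriori} of Theorem~\ref{thm:solution_est} applied to $u^1$ at regularity $m'+1\le m-1$ (contributing $|u^1_0|_{H^{m'+1}}$ times a factor absorbed into $q_{m'}$), and $|v_r|_{H^{m_*-1}}$ by interpolating between \eqref{ineq:diff_est_H1} at the appropriate level and the just-obtained uniform $H^{m'-1}$-bound for $\theta$ (equivalently, by a short downward induction on $m'$), which produces the stated $Tq_{m'}(T)|u^1_0|_{H^{m'+1}}|v_0|_{H^{m_*-1}}$ term.

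The hard part is the coupling term $\pounds_v\omega^2$: it must be estimated so that the resulting bound is quadratic in the relevant difference norm with no inhomogeneous remainder, since that is what rough Gronwall — and hence uniqueness — requires. At the $H^1$ level this cannot be done by soft product/Sobolev estimates and forces use of the precise algebraic structure ($\theta=\bd v^\flat$, $\operatorname{div} u^2=\operatorname{div} v=0$, $\delta\omega^2=-\Delta(u^2)^\flat$) together with a scaling-sharp H\"older inequality; this works precisely because in the rough-driver scheme one only ever tests the squared equation against the constant trace tensor $\boldsymbol{I}$, so the delicate cancellation is available at exactly the test object one needs. At level $H^{m'}$ the extra Sobolev room makes the commutator estimates nearly quadratic, but one irreducible term survives; isolating it as $|u^1|_{H^{m'+1}}|v|_{H^{m_*-1}}|\theta|_{H^{m'-1}}$ and closing by feeding back the lower-order difference bound is the second delicate point.
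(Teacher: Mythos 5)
Your proposal is correct and follows the same architecture as the paper's proof: write the equation for the vorticity difference, pass to the tensor square of $(\partial^I\theta)_{|I|\le m'-1}$ in the scale $E_n=W^{n,\infty}(\bbT^d;A_{d,m'-1}\otimes A_{d,m'-1})$, control the remainder via Theorem~\ref{thm:URDRemEst}, test against $\boldsymbol{I}$, apply rough Gronwall, and close the top-order estimate by feeding back a propagated lower-order difference bound together with \eqref{ineq:weak_apriori} for $|u^1_r|_{H^{m'+1}}$ (which is exactly why $m'\le m-2$ is imposed); the paper realizes your ``downward induction'' by running the low-order argument at the two levels $m'\in\{1,m_*-1\}$, and your identification of the single non-quadratic drift term $|u^1|_{H^{m'+1}}|v|_{H^{m_*-1}}|\theta|_{H^{m'-1}}$ matches \eqref{ineq:diff_est_lower}--\eqref{ineq:diff_est_highest}. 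The one point where you diverge is the coupling term at the $H^1$ level, and there your diagnosis is off: soft product estimates do suffice. The paper bounds the analogue of $|v\cdot\nabla\omega^2|_{L^2}$ directly by the scaling-sharp bilinear Sobolev estimate \eqref{ineq:alg_sob_0}, $|fg|_{L^2}\lesssim|f|_{H^1}|g|_{H^s}$ for $s>\frac{d}{2}-1$, applied with $f=v$ and $g=\nabla\omega^2\in H^{m_*-2}$ (note $m_*-2=\lfloor d/2\rfloor>\frac{d}{2}-1$), so no pointwise control of $\nabla^2u^2$ and no integration by parts is needed. Your detour through $\theta=\bd v^\flat$, $\delta\omega^2=-\Delta (u^2)^\flat$ and a three-factor H\"older inequality lands on the same quadratic bound $\lesssim |u^2|_{H^{m_*}}|\theta|_{L^2}^2$ (the H\"older exponents close precisely because $m_*\ge \frac{d}{2}+1$), so it is a valid but unnecessarily laborious substitute, and the same remark applies to the cross-time pairings in the remainder; the choice of decomposition $\pounds_{u^1}\theta+\pounds_v\omega^2$ versus the paper's $\pounds_v\omega^1+\pounds_{u^2}\theta$ is immaterial.
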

\begin{proof}
Let $u=u^1-u^2$ and $\omega = \omega^1-\omega^2$. We will adopt the notation of the proof of Theorem \ref{thm:solution_est} and consider $m' \in \{1,m_*-1\}\cup \{m_*,m_*+1,\ldots \}$.  For $n\in \{0,1,2,3\}$, let $E_n= W^{n,\infty}(\bbT^d;A_{d,m'-1}\otimes A_{d,m'-1})$. By the analysis in the proof of Theorem \ref{thm:solution_est}, we have that $$\Omega^{\otimes 2} := \Omega^{1,\otimes 2}-\Omega^{2,\otimes 2}\in  L^\infty([0,T]; E_{-0})\cap  C^{p-\textnormal{var}}([0,T]; E_{-3})$$ and $$\Omega^{\otimes 2,\natural}=\Omega^{1,\otimes 2, \natural}-\Omega^{2,\otimes 2,\natural}\in C^{\frac{p}{3}-\textnormal{var}}_{2,\textnormal{loc}}([0,T];E_{-1})$$  satisfy for all $\Phi \in E_1$ and $(s,t)\in \Delta_T$,
$$
\langle \Omega^{\otimes 2,\natural}_{st}, \Phi \rangle = \langle \delta \Omega^{\otimes 2}_{st}, \Phi\rangle+\langle \clN_{st},\Phi \rangle  + \langle \Omega^{\otimes 2}_s, [\Gamma^{1,*}_{st}+\Gamma^{2,*}_{st}]\Phi \rangle,
$$ 
where 
\begin{align*}\langle\clN_{st},\Phi\rangle &:=\sum_{I,J\in \clI_{m'-1}}\int_s^t \langle \partial^{I} [\pounds_{u_r}\omega_r^{1}] \otimes \partial^{J} \omega_r+\partial^{I} \omega_r \otimes \partial^{J} [\pounds_{u_r}\omega_r^{1}] , \Phi^{IJ}\rangle\, \rmd r\\
&\qquad  +\sum_{I,J\in \clI_{m'-1}}\int_s^t \langle \left(\partial^{I} [\pounds_{u_r^2}\omega_r] - u_r^{2,q} \partial_{x^q} \partial^I \omega_r \right)\otimes \partial^J \omega_r, \Phi^{IJ}\rangle \, \rmd r\\
&\qquad  +\sum_{I,J\in \clI_{m'-1}}\int_s^t \langle\partial^I \omega_r \otimes \left(\partial^{J} [\pounds_{u_r^2}\omega_r] - u_r^{2,q}\partial_{x^q} \partial^J\omega_r \right), \Phi^{IJ}\rangle \, \rmd r\\
&\qquad-\sum_{I,J\in \clI_{m'-1}}\int_s^t \langle u_r^{2,q}\partial^I\omega_r \otimes \partial^J\omega_r,\partial_{x^q} \Phi^{IJ}\rangle \, \rmd r.
\end{align*}
For all $f\in H^1$ and $g\in H^s$ with $s>\frac{d}{2}-1$, the Sobolev embedding (see, e.g., \cite{benyi2013sobolev}) implies that
\begin{equation}\label{ineq:alg_sob_0}
	|fg|_{L^2} \lesssim_{d,s} |f|_{H^1}| g|_{H^{s}}.
\end{equation}
If $m'=1$, then using \eqref{ineq:alg_sob_0} and \eqref{ineq:alg_sob_1},  we find
\begin{align*}
|\pounds_{u}\omega^{1}|_{L^2} &\lesssim_{d} |u|_{H^1}| \omega^1|_{H^{m_*-1}} + |\nabla u|_{L^2}|\omega^1|_{L^2}\lesssim_{d}  |\omega|_{L^2} |u^1|_{H^{m_*}}\\
|\pounds_{u^2}\omega-u^2\cdot \nabla \omega|_{L^2} &\lesssim_{d}|\nabla u^2|_{L^\infty}|\omega|_{L^2} \lesssim_{d} |\omega|_{L^2}|u^2|_{H^{m_*}}.
\end{align*}
Making use of \eqref{ineq:alg_sob_1}  and \eqref{ineq:curl_equiv}, we obtain
\begin{align*}
\sum_{I\in \clI_{m'-1}}|\partial^I[\pounds_{u}\omega^{1}]|_{L^2}
&\lesssim_{d,m'} \sum_{l=0}^{m'-1}\left(||D^l u| |D^{m'-l}\omega^{1}||_{L^2}+||D^{l+1}u| |D^{m'-1-l} \omega^{1}||_{L^2} \right)\\
&\lesssim_{d,m'}|u|_{L^\infty} |\omega^{1}|_{H^{m'}} +|\omega^1|_{L^\infty}|u|_{m'}\\
&\lesssim_{d,m'}\begin{cases}
|\omega|_{H^{m'-1}}|u^1|_{H^{m_*}} & \textnormal{if} \;\; m'=m_*-1,\\
|\omega|_{H^{m_*-2}}|u^1|_{H^{m'+1}} + |\omega|_{H^{m'-1}}|u^1|_{H^{m_*}} & \textnormal{if} \;\; m'\ge  m_*,
\end{cases}
\end{align*}
and 
\begin{align*}
\sum_{I\in \clI_{m'-1}}|\partial^I[\pounds_{u^2}\omega]-u^{2,q}\partial_{x^q}\partial^I \omega|_{L^2}
&\lesssim_{d,m'} \sum_{l=1}^{m'-1}||D^{l} u^2| |D^{m'-l} \omega||_{L^2}+\sum_{l=0}^{m'-1}||D^{l+1} u^2| |D^{m'-1-l} \omega||_{L^2} \\
&\lesssim_{d,m'}
\begin{cases}
\sum_{l=0}^{m'-1}||D^{l}\nabla  u^2| |D^{m'-l} u||_{L^2}& \textnormal{if} \; m'=m_*-1,\\
\sum_{l=0}^{m'-1}||D^{l}\nabla u^2| |D^{m'-1-l} \omega||_{L^2} & \textnormal{if} \; m'\ge  m_*,
\end{cases}\\
&\lesssim_{d,m'}
\begin{cases}
|\nabla u^2|_{L^\infty} |u|_{H^{m'}} + |u|_{L^\infty} | u^2|_{H^{m_*}} & \textnormal{if} \;\; m'=m_*-1,\\
|\nabla u^2|_{L^\infty} |\omega|_{H^{m'-1}} + |\omega|_{L^\infty} | u^2|_{H^{m'}} & \textnormal{if} \; m'\ge  m_*,
\end{cases}\\
&\lesssim_{d,m'}|\omega|_{H^{m'-1}} |u^2|_{H^{m_*}}.
\end{align*}
Putting it all together and using $|u^2|_{L^\infty}\lesssim_d |u^2|_{H^{m_*}}$, we get that there exists a constant $C=C(d,m')$ such that for all $(s,t)\in \Delta_T,$
\begin{align*}
|\clN_{st}^{m'}|_{E_{-1}}&\le C\begin{cases}
\int_s^t  |\omega_r|_{L^2}^2\left( |u^1_r|_{H^{m_*}} + |u_r^2|_{H^{m_*}} \right) \rmd r& \textnormal{if} \; m'=1, \\
\int_s^t  |\omega_r|_{H^{m'-2}}^2\left( |u^1_r|_{H^{m_*}} + |u_r^2|_{H^{m_*}} \right) \rmd r& \textnormal{if} \; m'=m_*-1, \\
\int_s^t |\omega_r|_{H^{m'-1}}|\omega_r|_{H^{m_*-2}}|u^{1}_r|_{H^{m'+1}}+|\omega_r|_{H^{m'-1}}^2\left( |u^1_r|_{H^{m_*}} + |u_r^2|_{H^{m'}} \right)  \,\rmd r&\textnormal{if} \; m'\ge m_*,
\end{cases}\\
&\quad =:\varpi_{\clN^{m'}}(s,t).
\end{align*}
Therefore, $\Omega^{\otimes 2}$ is a solution of 
$$
\rmd \Omega^{\otimes 2} + \clN(\rmd t) + \boldsymbol{\Gamma}(\rmd t) \Omega^{\otimes 2}= 0
$$
in the sense of Definition \ref{def:urd_eqn}.
Owing to Theorem  \ref{thm:URDRemEst}, there is a constant $C=C(p,d,m, |\xi|_{W^{m'+2,\infty}})$ depending  in an increasing way on $|\xi|_{W^{m'+2,\infty}}$ and a constant $L=L(p)$ such that for all $(s,t)\in \Delta_T$ with $\varpi_{\boldsymbol{\Gamma}}(s,t)\le L$ it holds that
$$
|\Omega^{ \otimes 2,\natural}|_{\frac{p}{3}-\textnormal{var};[s,t];E_{-3}}^{\frac{p}{3}}\le C \left(\sup_{s\le r\le t}|\omega_r|_{H^{m'-1}}^2 \varpi_{\bZ}(s,t)^{\frac{3}{p}} + \varpi_{\clN^{m'}}(s,t)\varpi_{\bZ}(s,t)^{\frac{1}{p}}\right).
$$
Henceforth, let $C=C(p,d,m, |\xi|_{W^{m'+2,\infty}})$ denote a constant depending in an increasing way on $|\xi|_{W^{m'+2,\infty}}$ that may change from line to line. Letting $\Phi = \boldsymbol{I}$ in  \eqref{eq:vorticity_squared} as in the proof of Theorem \ref{thm:solution_est}, we get that for all $(s,t)\in \Delta_T$,
$$
\delta (|\omega|_{H^{m'-1}}^2)_{st}\le  -2\sum_{I\in \clI_{m'-1}}\int_s^t  \langle \partial^{I} [\pounds_{u_r}\omega_r^{1}+\pounds_{u^2_r}\omega_r] , \partial^I \omega_r\rangle  \, \rmd r  +\langle \Omega_s^{\otimes 2}, [\Gamma^{1,*}_{st}+\Gamma^{2,*}_{st}]\boldsymbol{I} \rangle  + \langle \Omega^{\otimes 2,\natural}_{st},\boldsymbol{I}\rangle.
$$

Thus, if $m'\in \{1,m_*-1\}$, then for all $(s,t)\in \Delta_T$ with $\varpi_{\boldsymbol{\Gamma}}(s,t)\le L\wedge 1$ ,  we have
$$
\delta (|\omega|_{H^{m'-1}}^2)_{st}\le C\int_s^t|\omega_r|_{H^{m'-1-2}}^2\left( |u^1_r|_{H^{m_*}} + |u_r^2|_{H^{m_*}} \right) \,\rmd r +C \sup_{s\le r\le t}|\omega_r|_{H^{m'-1}}^2 \varpi_{\bZ}(s,t)^{\frac{1}{p}}.
$$
Applying Lemma \ref{lem:RoughGronwall}, we get
\begin{equation}\label{ineq:diff_est_lower}
\sup_{t\le T}|\omega_t^1-\omega_t^2|_{H^{m'-1}} \le\sqrt{2} \exp\left(C\left(\int_{0}^T
\left( | u_r^1|_{H^{m_*}}+ | u_r^2|_{H^{m_*}}\right) \,\rmd r+\varpi_{\bZ}(0,T) \right)\right)|\omega_0^1-\omega_0^2|_{H^{m'-1}},
\end{equation}
and hence \eqref{ineq:diff_est_H1} follows from \eqref{ineq:curl_equiv}.

If $m'\ge m_*$, then for all $(s,t)\in \Delta_T$ with $\varpi_{\boldsymbol{\Gamma}}(s,t)\le L\wedge 1$ , 
\begin{align*}
\delta (|\omega|_{H^{m'-1}}^2)_{st}&\le  C \int_s^t\left( |\omega_r|_{H^{m'-1}}|\omega_r|_{H^{m_*-2}}|u^{1}_r|_{H^{m'+1}}+|\omega_r|_{H^{m'-1}}^2\left(|u^1_r|_{H^{m_*}} +|u^2_r|_{H^{m'}}\right)\right) \,\rmd r\\
&\quad +C \sup_{s\le r\le t}|\omega_r|_{H^{m'-1}}^2 \varpi_{\bZ}(s,t)^{\frac{1}{p}}.
\end{align*}
Using Lemma \ref{lem:RoughGronwall}, \eqref{ineq:weak_apriori}, and \eqref{ineq:diff_est_lower}, we obtain that for all $t\in [0,T]$ and $t'>t$,
$$
|\omega_t|_{H^{m'-1}}^2\le q_{m'}(t')\left( |\omega_{0}|_{H^{m'-1}}^2+ |\omega_{0}|_{H^{m_*-2}}|\omega^1_{0}|_{H^{m_*-1}}\int_{0}^t|\omega_r|_{H^{m'-1}} \,\rmd r \right),
$$
where  
$$
q_{m'}(t'):=\sqrt{2} \exp\left( C\left(\int_{0}^{t'}
\left( |u^1_r|_{H^{m_*}} + |u_r^2|_{H^{m'}} \right) \,\rmd r+\varpi_{\bZ}(0,t') \right)\right).
$$
Thus, letting $t'\downarrow t$, we find that for all $t\in [0,T]$,
$$
|\omega^1_t-\omega^2_t|_{H^{m'}}\le \sqrt{q_{m'}(t)} |\omega^1_{0}-\omega^2_{0}|_{H^{m'}}+ tq_{m'}(t)|\omega^1_{0}-\omega^2_{0}|_{H^{m_*-2}}|\omega^1_{0}|_{H^{m'+1}},
$$
which yields \eqref{ineq:diff_est_highest}.
\end{proof}

\section{Local well-posedness (Proof of  Theorem \ref{thm:local_wp})} \label{sec:local_wp}
\begin{proof}[Proof of Theorem \ref{thm:local_wp}]
There exists a sequence $\{(u_0^n,\mathbf{Z}^n,\xi^n)\}_{n=1}^\infty \in \mathring{C}^\infty_{\sigma}\times C_g^{1-\textnormal{var}}\times (C^\infty_{\sigma})^K$ such that for all $n\in \bbN,$ $\bZ^n=(\delta z^n, \bbZ^n)$ is the canonical lift of a $z\in C^\infty(\bbR_+;\bbR^K)$, $\{(u_0^n,\mathbf{Z}^n,\xi^n)\}_{n=1}^\infty$  converges to $(u_0,\bZ, \xi)$ in  $H^{m}_{\sigma}\times C_g^{p-\textnormal{var}}\times W^{m+1,\infty}$, and  for all $n\in \bbN$,
\begin{equation}\label{ineq:data_bounds}
|u_{0}^n|_{H^{m}}\le|u_{0}|_{H^{m}},\quad|\xi^n|_{W^{m+2,\infty}} \le |\xi|_{W^{m+2,\infty}},\quad
\varpi_{\bZ^n}(s,t)\le 1+ \varpi_{\bZ}(s,t), \; \forall (s,t)\in \Delta_{[0,\infty)}.
\end{equation}
For all $n\in \bbN$, there exists a maximally extended  solution  $u^n\in  C^{1}([0,T_{\textnormal{max}}^n);\mathring{C}^\infty_{\sigma})$  of 
\begin{equation}\label{eq:approximate_eq}
\begin{cases}
\rmd u^n + P(u^n\cdot \nabla u^n)\rmd t- P\pounds^*_{\xi_k^n}  u^n\rmd z^{n,k}_t=0\quad \textnormal{on}\;\; (0,T_{\textnormal{max}}^n)\times \bbT^d,\\
u^n=u^n_{0} \quad \textnormal{on}\;\; \{0\}\times \bbT^d
\end{cases}
\end{equation}
such that  if $T^n_{\textnormal{max}}<\infty$, then $\limsup_{t\uparrow T_{\textnormal{max}}^n}|u^n_t|_{H^{m_*}}= \infty$.\footnote{Indeed, to see this, for given $N\in \bbN$,  consider the equation
\begin{equation*}
\begin{cases}
\partial_t u^{N,n}+PP_{\le N}(P_{\le N}u^{N,n}\cdot \nabla P_{\le N}u^N) -PP_{\le N}(\pounds_{\xi_k}^*P_{\le N}u^{N,n})\, \dot{z}^{n,k}_t=0
\quad \textnormal{on}\;\; (0,\infty)\times \bbT^d ,\\
u^{N}=P_{\le N}u_0 \quad \textnormal{on} \;\;\{0\}\times \bbT^d.
\end{cases}
\end{equation*}
One may then derive solution estimates independent of $N$ but dependent on $\int_0^t|\dot{z}^n|\,\rmd t$ as in \cite{majda2002vorticity}[Chapter 3] or \cite{bahouri2011fourier}[Chapter 7]. We are not able to derive a priori estimates independent of $N$ and $n$ jointly because the presence of the projection $P_{\le N}$ prohibits us from deriving the unbounded rough driver equation (i.e. Definition \ref{def:urd_eqn}) for  $\partial_I\omega^{N,n}\otimes \partial_J\omega^{N,n}$, and hence applying Theorem \ref{thm:URDRemEst} to derive solution estimates independent of $n$ as in the proof of Theorem \ref{thm:solution_est}. For fixed $n\in \bbN$, one passes to the limit as $N\rightarrow \infty$ and establishes all other solution properties (e.g., BKM blow-up criterion) as in \cite{majda2002vorticity}[Chapter 3] or \cite{bahouri2011fourier}[Chapter 7] or as detailed below for the rough version of the equation.} 
Integrating \eqref{eq:approximate_eq} over an arbitrary interval $[s,t]\subset [0,T^n_{\textnormal{max}})$ and then substituting the equation into the $\rmd Z^{n,k}$-integral twice, we obtain
\begin{equation}\label{eq:rough_velocity_proj_approx}
u^{n,P,\natural}_{st}=\delta u^n_{st}+ \int_s^tP(u^n_r\cdot \nabla u^n_r)\, \rmd r-  P\pounds_{\xi_k^n}^*u^n_sZ^{n,k}_{st} -P[\pounds_{\xi_k^n}^*P[\pounds_{\xi_l^n}^*u_s^n]]\mathbb{Z}^{n,lk}_{st},
\end{equation}  
where
\begin{align*}
u^{n,P,\natural}_{st}&:=-\int_{s<t_2<t_1<t}P\pounds^*_{\xi_k^n}  Pu^n_{t_2}\cdot \nabla u^n_{t_2}\rmd t_2\, \rmd z^{n,k}_{t_1} -\int_{s<t_3<t_2<t_1<t}P\pounds_{\xi_k^n} P\pounds^*_{\xi_l^n}P (u^n_{t_3}\cdot \nabla u^n_{t_3})\rmd t_3\, \rmd z^{n,l}_{t_2}\, \rmd z^{n,k}_{t_1}\\
&\qquad + \int_{s<t_3<t_2<t_1<t}P\pounds^*_{\xi_k^n} P\pounds^*_{\xi_l^n}P\pounds^*_{\xi_q^n} u^n_{t_3}\, \rmd z^{n,q}_{t_3}\, \rmd z^{n,l}_{t_2}\, \rmd z^{n,k}_{t_1}.
\end{align*}
and hence $u^{n,P,\natural}\in C_{2,\textnormal{loc}}^{\frac{p}{3}-\textnormal{var}}([0,T^n_{\textnormal{max}}); \mathring{C}^\infty_{\sigma})$. In particular, $u^n$ is a $H^{m+2}$-solution  of \eqref{eq:rough_velocity_proj_approx} on the interval $[0,T^n_{\textnormal{max}})$ in the sense of Definition \ref{def:solution_velocity}.

By virtue of \eqref{ineq:data_bounds} and Theorem \ref{thm:solution_est}, there are constants $C_1=C_1(p,d,m, |\xi|_{W^{m_*+2,\infty}})$ depending in an increasing way on $|\xi|_{W^{m_*+2,\infty}}$ and constant $C_2=C_2(p,d,m, |\xi|_{W^{m+2,\infty}})$ such that for all $T_*< T^{n}_{\textnormal{max}}$ satisfying
$$
e^{C_1(1+\varpi_{\bZ}(0,T_*))}T_*<|u_{0}|_{H^{m_*}}^{-1},
$$
it holds that for all $n\in \bbN$, 
\begin{align}
\sup_{t\le T_*}|u_t^n|_{H^{m_*}}&\le \frac{e^{C_1(1+\varpi_{\bZ}(0,T_*))}}{|u_{0}^n|_{H^{m_*}}^{-1}-e^{C_1(1+\varpi_{\bZ}(0,T_*))} T_*},\label{ineq:mstar_seq_est}\\
\sup_{t\le T_*}|u_t^n|_{H^{m}} &\le \sqrt{2} \exp\left(C_2 \left(\int_{0}^{T_*}| \nabla u_s^n|_{L^\infty}\,\rmd r
+\varpi_{\bZ}(0,T_*)\right) \right)|u_{0}|_{H^{m}} \label{ineq:m_seq_est}\\
&\le \sqrt{2} \exp\left(C_2 \left(\int_{0}^{T_*}|u_s^n|_{H^{m_*}}\,\rmd r
+\varpi_{\bZ}(0,T_*)\right) \right)|u_{0}|_{H^{m}}.\notag
\end{align}
Furthermore, by Theorem \ref{thm:remainder_est}, there exists a constant $C=C(p,d,m, |\xi|_{W^{m+2,\infty}})$ such that for all $(s,t)\in \Delta_{T_*}$ with $C\varpi_{\bZ}(s,t)\le 1$, it holds that
$$
|u^{n,P,\natural}|_{\frac{p}{3}-\textnormal{var};[s,t];H^{m-3}}^{\frac{p}{3}}\le C \left(\sup_{s\le r\le t}|u_r^n|_{H^{m}} \varpi_{\bZ}(s,t)^{\frac{3}{p}} + \int_s^t |u_r^n|_{H^{m}}^2\,\rmd r\varpi_{\bZ}(s,t)^{\frac{1}{p}}\right)
$$
and for all $(s,t)\in \Delta_T$ with $C\varpi_{\bZ}(s,t)+C\int_s^t|u_r^n|_{H^{m}}^2\,\rmd r\le 1$, it holds that
$$
|u^n|_{p-\textnormal{var};[s,t];H^{m-1}}^p\le C \left(\int_s^t|u_r^n|_{H^{m}}^2\,\rmd r+ \sup_{s\le r\le t}|u_r|_{H^{m}}\left( \left(\int_s^t|u_r^n|_{H^{m}}^2\,\rmd r\right)^{\frac{1}{p}}+\varpi_{\bZ}(s,t)^{\frac{1}{p}}\right)\right).
$$

We deduce that $\{u^n\}_{n=1}^\infty$ is  bounded in $L^\infty([0,T_*]; \mathring{H}_{\sigma}^{m})\cap C^{p-\textnormal{var}}([0,T_*]; \mathring{H}^{m-1}_{\sigma})$. In particular,  $\{u^n\}_{n=1}^\infty$ is equicontinuous in $\mathring{H}^{m-1}_{\sigma}$. For all $\epsilon>0$,  there exists a $\theta\in (0,1)$ such that for all $(s,t)\in [0,T_*]^2$,
$$
|u^n_t-u^n_s|_{H^{m-\epsilon}} \lesssim_{d,m} |u^n_t-u^n_s|_{H^{m-1}}^{\theta}  \sup_{t\le T_*} |u^n_t|_{H^{m}}^{1-\theta},
$$
which implies that the sequence $\{u^n\}_{n=1}^\infty$ is equicontinuous in $H^{m-\epsilon}$. Moreover, by virtue of the boundedness of $\{u^n\}_{n=1}^\infty$ in  $L^\infty([0,T_*]; \mathring{H}_{\sigma}^{m}) $ and the compactness of $H^{m}$ in $H^{m-\epsilon}$,  the generalized Arzel\'a-Ascoli theorem implies that there exists a subsequence $\{u^{n_j}\}$ that converges to $u$ in $C([0,T_*]; \mathring{H}^{m-\epsilon}_{\sigma})$ for any $\epsilon>0$. Moreover, we may extract a further subsequence, also denoted by $\{u^{n_j}\}$ that converges to $u$ in the weak-star topology of $L^\infty([0,T_*]; \mathring{H}_{\sigma}^{m})$. Furthermore,
$
u \in C^{p-\textnormal{var}}([0,T_*]; \mathring{H}^{m-1}_{\sigma}) \cap L^\infty([0,T_*]; \mathring{H}_{\sigma}^{m}).
$
Using the lower semicontinuity of norms in the weak-star topology,  we may pass to the limit in \eqref{ineq:mstar_seq_est} and \eqref{ineq:m_seq_est} to obtain \eqref{ineq:m_star_soln_est} and \eqref{ineq:m_soln_est}.

\begin{claim}
$u \in  C_w([0,T_*]; \mathring{H}^{m}_{\sigma})$
\end{claim}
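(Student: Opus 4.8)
The plan is to read off weak continuity from the two facts just established: $u\in L^\infty([0,T_*];\mathring{H}_{\sigma}^{m})$ and $u\in C([0,T_*];\mathring{H}^{m-\epsilon}_{\sigma})$ for every $\epsilon>0$. This is exactly the setting of the classical Strauss criterion — a map in $L^\infty(I;X)$ with $X$ reflexive that is continuous into a Banach space $Y\supset X$ with continuous embedding lies in $C_w(I;X)$ — applied with $X=\mathring{H}_{\sigma}^{m}$ (a closed subspace of the Hilbert space $H^m$, hence reflexive) and $Y=\mathring{H}^{m-\epsilon}_{\sigma}$; I would include the short self-contained argument rather than only cite it.

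First I would promote the a.e.\ bound coming from $u\in L^\infty([0,T_*];\mathring{H}_{\sigma}^{m})$ to a pointwise one. Write $M$ for that essential supremum. Fix $t\in[0,T_*]$ and choose $t_n\to t$ with every $t_n$ in the full-measure set on which $|u_{t_n}|_{H^m}\le M$. Since $\mathring{H}_{\sigma}^{m}$ is Hilbert, a subsequence satisfies $u_{t_{n_k}}\rightharpoonup v$ weakly in $\mathring{H}_{\sigma}^{m}$ with $|v|_{H^m}\le M$; the continuous inclusion $\mathring{H}_{\sigma}^{m}\hookrightarrow\mathring{H}^{m-\epsilon}_{\sigma}$ upgrades this to weak convergence in $\mathring{H}^{m-\epsilon}_{\sigma}$, while the continuity of $u$ forces $u_{t_{n_k}}\to u_t$ strongly there, so $v=u_t$. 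Hence $u_t\in H^m$ with $|u_t|_{H^m}\le M$ for \emph{every} $t\in[0,T_*]$, and since $u_t\in\mathring{H}^{m-\epsilon}_{\sigma}$ already encodes $\operatorname{div}u_t=0$ and $\hat{u}_t(0)=0$, in fact $u_t\in\mathring{H}_{\sigma}^{m}$ for every $t$.

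With the pointwise bound established, weak continuity follows by a routine subsequence argument. Given $t\in[0,T_*]$, a sequence $t_n\to t$, and a test element $\phi\in H^m$, the family $\{u_{t_n}\}$ is bounded in $\mathring{H}_{\sigma}^{m}$; any subsequence of $\{(u_{t_n},\phi)_{H^m}\}$ therefore has a further subsequence along which $u_{t_{n_k}}\rightharpoonup w$ in $\mathring{H}_{\sigma}^{m}$, and the same identification as above (strong convergence to $u_t$ in $\mathring{H}^{m-\epsilon}_{\sigma}$) gives $w=u_t$, whence $(u_{t_{n_k}},\phi)_{H^m}\to(u_t,\phi)_{H^m}$. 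As the limit does not depend on the subsequence, the whole sequence converges, which is precisely $u\in C_w([0,T_*];\mathring{H}_{\sigma}^{m})$.

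The only delicate point — the nearest thing to an obstacle — is the identification of every weak $H^m$-limit with $u_t$, which hinges on already having strong continuity of $u$ in the slightly weaker norm. That has been secured in the preceding Arzel\'a--Ascoli step via interpolation between the uniform $H^m$-bound and the $p$-variation bound in $H^{m-1}$, so no additional estimate is required here.
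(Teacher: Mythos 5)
Your proof is correct but takes a genuinely different route from the paper's. The paper argues directly with test functions: given $\phi\in\mathring{H}^{-m}_{\sigma}$, it approximates $\phi$ by some $\psi_{\epsilon}\in\mathring{H}^{-m+1}_{\sigma}$ using density, then splits $\langle \phi,u_t-u_s\rangle_{H^m}$ into a piece controlled by $\sup_{t\le T_*}|u_t|_{H^m}\,|\phi-\psi_\epsilon|_{H^{-m}}$ and a piece $\langle\psi_\epsilon,u_t-u_s\rangle_{H^{m-1}}$ controlled by the already-established continuity in $\mathring{H}^{m-1}_\sigma$. You instead run the standard Strauss-lemma compactness argument: for $t_n\to t$, boundedness in $\mathring{H}^m_\sigma$ gives weakly convergent subsequences, and strong continuity in $\mathring{H}^{m-\epsilon}_\sigma$ pins every weak subsequential limit to $u_t$, so the whole sequence converges weakly. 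Both proofs are correct and of comparable length. Your version has the virtue of being explicit about the passage from the a.e. statement $u\in L^\infty([0,T_*];\mathring{H}^m_\sigma)$ to a genuine pointwise bound $u_t\in\mathring{H}^m_\sigma$ for every $t$, which the paper tacitly uses when it writes $\sup_{t\le T_*}|u_t|_{H^m}$; the paper's version has the virtue of being constructive, exhibiting the $\delta_{\epsilon,\phi}$ witnessing weak continuity directly rather than invoking subsequence extraction.
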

\begin{proof}
Fix an arbitrary $\epsilon>0$ and  $\phi \in \mathring{H}^{-m}_{\sigma}$. We need to show that there exists a $\delta_{\epsilon,\phi}>0$ such that for all $(s,t)\in [0,T_*]$ with  $|t-s|<\delta_{\epsilon,\phi}$, it holds that 
$$
\langle \phi, u_t-u_{s}\rangle _{H^m}<\epsilon
$$
Since $\mathring{H}^{-m+1}_{\sigma}$ is dense in $\mathring{H}^{-m}_{\sigma}$, there exists a $\psi_{\epsilon}\in \mathring{H}^{-m+1}_{\sigma}$  such that 
$$
|\phi-\psi_{\epsilon}|_{H^{-m}}<\frac{\epsilon}{4}\sup_{t\le T_*}|u_t|_{H^m}^{-1}.
$$
It follows that for all $(s,t)\in [0,T_*]^2$,
\begin{align*}
\langle \phi, u_t-u_{s}\rangle_{H^m}&= \langle\psi_{\epsilon}, u_t-u_{s}\rangle_{H^{m-1}}+\langle \phi-\psi_{\epsilon}, u_t-u_{s}\rangle_{H^m}\\
&\le \langle \psi_{\epsilon}, u_t-u_{s}\rangle_{H^{m-1}} + 2\sup_{t\le T_*}|u|_{H^m}|\phi-\psi_{\epsilon}|_{H^{-m}}\\
&< \langle \psi_{\epsilon}, u_t-u_{s}\rangle_{H^{m-1}} +\frac{\epsilon}{2}.
\end{align*}
Since $u\in C([0,T_*];\mathring{H}_{\sigma}^{m-1,2})\subset C_w([0,T_*];\mathring{H}_{\sigma}^{m-1})$ we can find $\delta_{\epsilon,\phi}>0$ such that for all $(s,t)\in [0,T_*]^2$ with $|t-s|<\delta_{\epsilon,\phi}$,
$
\langle \psi_{\epsilon}, u_t-u_{s}\rangle _{H^{m-1}} <\frac{\epsilon}{2},
$
which completes the proof of the claim.
\end{proof}

We will now show that $u$ is a $H^{m}$-solution of  \ref{eq:rough_velocity_projected} on the interval $[0,T_*]$ in the sense of Definition \ref{def:solution_velocity}. By \eqref{eq:rough_velocity_proj_approx}, for all $\phi \in \mathring{C}^\infty_{\sigma}$ and $(s,t)\in \Delta_{[0,T_*]}$, we have
$$
\langle u^{n_j,P,\natural}_{st},\phi\rangle=(\delta u^{n_j}_{st},\phi)_{L^2}- \int_s^t(u^{n_j}_r\otimes u^{n_j}_r,\nabla \phi)_{L^2}\,\rmd r+  (u^{n_j}_s,\pounds_{\xi_k^n}\phi)_{L^2}Z^{n_j,k}_{st}  -(u_s^{n_j},\pounds_{\xi_l^n}P\pounds_{\xi_k^n}\phi)_{L^2}\mathbb{Z}^{n_j,lk}_{st}. 
$$ 
Since $u^{n_j}\rightarrow u$ in $C([0,T_*];\mathring{L}^2_{\sigma})$, $\xi^{n_j}\rightarrow \xi$ in $(W^{2,\infty}_{\sigma})^K$,  and $\bZ^{n_j}\rightarrow \bZ$  in $C_g^{p-\textnormal{var}}$,    
the right-hand-side of the above converges for every $\phi \in \mathring{C}^\infty_{\sigma}$. Thus, $\langle u^{n,P,\natural}_{st},\phi\rangle $ converges to $\langle u^{\natural,P}_{st},\phi\rangle $, which implies that $u^{P,\natural} \in C^{\frac{p}{2}-\textnormal{var}}([0,T];\mathring{D}'_{\sigma})$.  Since $\{u^{n_j,P,\natural}\}$ is bounded uniformly in $C^{\frac{p}{3}-\textnormal{var}}_{2,\textnormal{loc}}([0,T_*]; \mathring{H}_{\sigma}^{m-3})$, we have $u^{P,\natural} \in C^{\frac{p}{3}-\textnormal{var}}_{2,\textnormal{loc}}([0,T_*]; \mathring{H}_{\sigma}^{m-3})$, which completes the proof that $u$ is an $H^m$-solution in the sense of Definition \ref{def:solution_velocity}. Uniqueness follows immediately from \eqref{ineq:diff_est_H1} in Theorem \ref{thm:diff_est}.

Finally, we will show that if $\xi \in (W^{m+4,\infty})^K$, then $u\in C([0,T_*];\mathring{H}^m_{\sigma})$.
For given $N\in \bbN$, let $u^N_0=P_{\le 2^N}u_0 \in \mathring{C}_{\sigma}$. By the above proof of local well-posedness, since $\xi \in W^{m+4,\infty}_{\sigma}$ and $|u_0^N|_{H^{m_*}}\le |u_0|_{H^{m_*}}$ for all $N\in \bbN$, there exists a sequence  of $H^{m+2}$-solutions $\{u^N\}_{N=1}^\infty\subset L^\infty([0,T_*];\mathring{H}^{m+2}_{\sigma})\cap C^{p-\textnormal{var}}([0,T_*];\mathring{H}^{m+1}_{\sigma})$ of \eqref{eq:rough_velocity_projected} on interval $[0,T_*]$ with initial conditions $\{u^{N}_0\}_{N=1}^\infty\subset \mathring{H}^{m+2}_{\sigma}$. Moreover, for all $N\in \bbN,$
\begin{align*}
\sup_{t\le T_*}|u_t^N|_{H^{m_*}}&\le\frac{e^{C_1(1+\varpi_{\bZ}(0,T_*))}}{|u_{0}^N|_{H^{m_*}}^{-1}-e^{C_1(1+\varpi_{\bZ}(0,T_*))} T_*} \\
&\le \frac{e^{C_1(1+\varpi_{\bZ}(0,T_*))}}{|u_{0}|_{H^{m_*}}^{-1}-e^{C_1(1+\varpi_{\bZ}(0,T_*))} T_*},\\
\sup_{t\le T_*}|u_t^N|_{H^{m}}&\le \sqrt{2} \exp\left(C_2 \left(\int_{0}^{T_*}| u_s^N|_{H^{m_*}}\,\rmd s
+\varpi_{\bZ}(0,T_*)\right) \right)|u_{0}|_{H^{m}}.
\end{align*}
By virtue of \eqref{ineq:diff_est_highest}, for all $N\in \bbN$, we have
$$
\sup_{t\le T_*}|u^N_t-u_t|_{H^{m}} \le \sqrt{q_{N}(T_*)} |u^N_{0}-u_{0}|_{H^{m}} + T_* q_{N}(T_*) |u^N_{0}-u_{0}|_{H^{m-1}} |u^N_{0}|_{H^{m+1}},
$$
where  
$$
q_{N}(T_*):=\sqrt{2} \exp\left(C\left(\int_{0}^{T_*}
\left( |u^N_s|_{H^m} + |u_s|_{H^{m}} \right) \,\rmd s+\varpi_{\bZ}(0,T_*) \right)\right).
$$
It follows that
\begin{align*}
2^{-N}|u^N_{0}|_{H^{m+1}}&=2^{-N}|P_{\le 2^N}u_{0}|_{H^{m+1}} \le |u_{0}|_{H^{m}}\\
\lim_{N\rightarrow \infty} 2^{N}|u^N_{0}-u_{0}|_{H^{m-1}}&\le  \lim_{N\rightarrow \infty} \left(\sum_{ |n|> 2^{N}} |n|^{2m}|\hat{u}_0(n)|^2\right)^{\frac{1}{2}}= 0,
\end{align*}
which implies that the sequence $\{u^N\}_{N=1}^\infty$ converges to $u$ in  $C([0,T_*]; \mathring{H}_{\sigma}^{m})$, and hence $u\in C([0,T_*]; \mathring{H}_{\sigma}^{m})$.
\end{proof}

\section{Proof of the remaining results}\label{sec:proof_remaining}
\subsection{Recovery of the pressure and harmonic part (Proof of Proposition \ref{prop:pressure_and_constant_recovery}) } \label{sec:pressure_recovery}
\begin{proof}[Proof of Proposition \ref{prop:pressure_and_constant_recovery}]
Define  $\Xi: \Delta_T\rightarrow H^{m-3}$ by
$$
\Xi_{st}=  \pounds_{\xi_k}^*u_sZ^k_{st} +\pounds_{\xi_k}^*P\pounds_{\xi_l}^*u_s\mathbb{Z}^{lk}_{st}.
$$
Using that $P=I-Q-H$ (see Section \ref{sec:prelim_Hodge}), we find that for all $(s,t)\in \Delta_T$, we have 
\begin{align*}
u^{P,\natural}_{st}&=\delta u_{st}+ \int_s^tu_r\cdot \nabla u_r\,\rmd r-\int_s^tQ(u_r\cdot \nabla u_r)\,\rmd r -\Xi_{st} + Q\Xi_{st} + H\Xi_{st}.
\end{align*}
We  will apply the sewing lemma (i.e., Lemma \ref{lem:sewing}) to construct the rough integral 
$$
I_t=\int_{0}^t \pounds_{\xi_k}^*u_r \rmd Z^k_r, \quad I_0=0,\\
$$
from the local expansion $\Xi$. For all $(s,\theta,t)\in \Delta^2_{T}$, 
$$
\delta \Xi_{s\theta t}=-\pounds_{\xi_k}^*u^{P,\sharp}_{s\theta}Z^k_{\theta t}- \pounds_{\xi_k}^*P\pounds_{\xi_l}^*\delta u_{s\theta}\bbZ^{lk}_{\theta t},
$$
where $u^{P,\sharp}$ is as defined in Theorem \ref{thm:remainder_est}. By Theorem \ref{thm:remainder_est}, for all $(s,t)\in \Delta_T$ with $C\varpi_{\bZ}(s,t)+C\int_s^t|u_r|_{H^{m}}^2\rmd r\le  1 $, it holds that
\begin{align*}
|\Xi_{st}|_{H^{m-3}}&\le C \sup_{ r\le T}|u_r|_{H^{m-1}} \varpi_{\bZ}(s,t)^{\frac{1}{p}}\\
|\delta \Xi_{s\theta t}|_{H^{m-3}}&\le C\left(|u^{P,\sharp}|_{\frac{p}{2}-\textnormal{var},[s,t],H^{m-2}}^{\frac{p}{2}} \varpi_{\bZ}(s,t)^{\frac{1}{p}} +|u|_{p-\textnormal{var},[s,t],H^{m-1}}^p\varpi_{\bZ}(s,t)^{\frac{2}{p}} \right).
\end{align*}
Thus, applying Lemma \ref{lem:sewing}, we find that there is a  paths $I\in C^{p-\textnormal{var}}([0,T];H^{m-3})$, $I^Q=QI\in C^{p-\textnormal{var}}([0,T];\nabla \mathring{H}^{m-2})$, and $h=HI\in C^{p-\textnormal{var}}([0,T];\bbR^d)$  such that $u^{\natural}:=\delta I- \Xi\in C^{\frac{p}{3}-\textnormal{var}}_{2,\textnormal{loc}}([0,T];H^{m-3})$, $u^{Q,\natural}:=\delta I^Q- Q\Xi\in C^{\frac{p}{3}-\textnormal{var}}_{2,\textnormal{loc}}([0,T];\nabla \mathring{H}^{m-2})$ and $u^{H,\natural}:=\delta h- H\Xi\in C^{\frac{p}{3}-\textnormal{var}}_{2,\textnormal{loc}}([0,T];\bbR^d)$.

For all $t\in [0,T]$, we have 
$$
\int_{0}^t|Q(u_r \cdot \nabla u_r)|_{H^{m-3}}\,\rmd r \le \int_{0}^t|u _r\cdot \nabla u_r |_{H^{m-3}}\,\rmd r \le \int_{0}^t |\nabla u_r|_{L^\infty} | u_r|_{H^{m-3}} \,\rmd r.
$$
We define $q\in  C^{p-\textnormal{var}}([0,T];\mathring{H}^{m-2})$ by 
$$\nabla q_t =-\int_{0}^{t} Q(u_r \cdot \nabla u_r)\, \rmd r + I^Q_{t}.$$ Therefore, we have that for all $(s,t)\in \Delta_T$, 
\begin{align*}
u^{P,\natural}_{st} + u^{\natural}_{st} +u^{Q,\natural}_{st} + u^{H,\natural}_{st}  &=\delta u_{st}+ \int_s^tu_r\cdot \nabla u_r\,\rmd r-\delta I_{st} + \nabla \delta q_{st} + \delta h_{st},
\end{align*}
from which we deduce  \eqref{eq:def_solution_velocity_pressure}  since the right-hand-side is a path of $\frac{p}{3}$-variation, and hence zero.
\end{proof}

\subsection{Maximally extended solution (Proof of  Corollary \ref{cor:maximal_solution})} \label{sec:maximally extended solution}

We will give a constructive proof of the maximally extended solution as it will be used in subsequent proofs.

\begin{remark}\label{rem:local_well_init_time}
Both Definition \ref{def:solution_velocity} and Theorem \ref{thm:local_wp} can be extended in the obvious manner to account for an arbitrary initial time $t_0\ge 0$. More precisely, for all $u_0\in \mathring{H}^m_{\sigma}$ and $\xi\in (W^{m+2}_{\sigma})^k$, there exists a unique $H^{m}$-solution  $u\in C_w([t_0,T_*];\mathring{H}_{\sigma}^{m})\cap C^{p-\textnormal{var}}([t_0,T_*];\mathring{H}^{m-1}_{\sigma})$ on an interval $[t_0,T_*]$  with initial condition $u|_{t=t_0}=u_0$ for any time $T_*$ satisfying
$$
e^{C_1(1+\varpi_{\bZ}(t_0,T_*))}(T_*-t_0)<|u_{0}|_{H^{m_*}}^{-1}
$$
and 
$$
\sup_{t\in [t_0,T_*]}|u_t|_{H^{m_*}}\le \frac{e^{C_1(1+\varpi_{\bZ}(t_0,T_*))}}{|u_{0}|_{H^{m_*}}^{-1}-e^{C_1(1+\varpi_{\bZ}(t_0,T_*))} (T_*-t_0)}.
$$
Moreover, if $m>m_*$,
$$
\sup_{t\in [t_0,T_*]}|u_t|_{H^{m}} \le \sqrt{2} \exp\left(C_2 \left(\int_{t_0}^{T_*}|\nabla u_s|_{L^\infty}\,\rmd s
+\varpi_{\bZ}(t_0,T_*)\right) \right)|u_{0}|_{H^{m}}.
$$
\end{remark}

\begin{proof}[Proof of Corollary \ref{cor:maximal_solution}]

Let $R>1$  and $\bar{C}>C_1(p,d,|\xi|_{W^{m_*+2,\infty}})$ be arbitrarily given, where $C_1$ is as given in \eqref{ineq:time_local_ex} in Theorem \ref{thm:local_wp}. Define $f: \bbR_+^2\rightarrow [0,\infty)$ by 
$$f(s,\delta)=e^{\bar{C}(1+\varpi_{\bZ}(s,s+\delta))}\delta.$$ Note that for all $s\in \bbR_+$, $f(s,0)=0$ and $\lim_{\delta\uparrow \infty}f(s,\delta)=\infty$. Since $\varpi_{\bZ}$ is non-decreasing in its second argument, $f$ is strictly increasing in $\delta$, and hence there exists an inverse of $f$ in its second argument $\delta_*:\bbR_+^2\rightarrow \bbR_+$; that is, for all $(s,M)\in \bbR_+^2$,
$$
f(s,\delta_*(s,M))=e^{\bar{C}(1+\varpi_{\bZ}(s,\delta_*(s,M)))}\delta_*(s,M)=M.
$$

Let $T_0=0$ and $u_{T_0}^0=u_0$. Let $T_1=T_0+\delta_*(T_{0},(R+|u_{T_0}^0|_{H^{m_*}})^{-1}))$ so that 
$$
e^{C_1(1+\varpi_{\bZ}(T_{0},T_{1}))}(T_{1}-T_{0})\le e^{\bar{C}(1+\varpi_{\bZ}(T_{0},T_{1}))}(T_{1}-T_{0})\le (R+|u_{T_{0}}^0|_{H^{m_*}})^{-1}< |u_{T_{0}}^0|_{H^{m_*}}^{-1}.
$$
By Theorem \ref{thm:local_wp}, there exists an $H^m$-solution $u^1$  of \eqref{eq:rough_velocity_projected} on the interval $[0,T_1]$. Let $T_2=T_1+\delta_*(T_{1},(R+|u_{T_1}^1|_{H^{m_*}})^{-1}))$. Since  
$$
e^{C_1(1+\varpi_{\bZ}(T_{1},T_{2}))}(T_{2}-T_{1})\le  e^{\bar{C}(1+\varpi_{\bZ}(T_{1},T_{2}))}(T_{1}-T_{0})= (R+|u_{T_{1}}^1|_{H^{m_*}})^{-1}< |u^1_{T_{1}}|_{H^{m_*}}^{-1},
$$
by Remark \ref{rem:local_well_init_time}, there exists an $H^m$-solution $\tilde{u}^2$  of \eqref{eq:rough_velocity_projected} on the interval $[T_1,T_2]$. Let $u^2=u^1$ on $[0,T_1]$ and $u^2=\tilde{u}^2$ on $[T_1,T_2]$ so that $u^2$ is a $H^m$-solution of \eqref{eq:rough_velocity_projected} on the interval $[0,T_2]$. Proceeding by induction, we define 
$$
T_{l+1}=T_{l}+\delta_*(T_{l},(R+|u^l_{T_{l}}|_{H^{m_*}})^{-1}), \;\;l \in \{3,\ldots, \},
$$
and appeal to Remark \ref{rem:local_well_init_time} to obtain an $H^m$-solution $u^n$ of \eqref{eq:rough_velocity_projected} on the interval $[0,T_l]$ for all $l\in \bbN$. Let $T_{\textnormal{max}}=\sup_{n\in \bbN_0}T_l$ and $u^{\textnormal{max}}=\lim_{l\rightarrow \infty}u^l$. It follows that $u^{\textnormal{max}}$ is the unique $H^m$-solution on the open $[0, T_{\textnormal{max}})$ by virtue of \eqref{ineq:diff_est_H1} in Theorem \ref{thm:diff_est}.

Assume that $T_{\textnormal{max}}<\infty$. Suppose, by contradiction, that $\limsup_{t\uparrow T_{\textnormal{max}}}|u_{t}|_{H^{m_*}}<\infty$. In particular, there exists $M>0$ such that $|u_t|_{H^{m_*}}<M$ for all $t<T_{\textnormal{max}}$. For arbitrarily chosen $\epsilon>0$, there exists a $L=L(\epsilon)>0$ such that for all $l>L$, $T_{l+1}-T_l<\epsilon$, which implies
$$
e^{\bar{C}(1+\varpi_{\bZ}(0,T_{\textnormal{max}}))}(R+M)^{-1}<e^{\bar{C}(1+\varpi_{\bZ}(T_{l},T_{l+1}))}(R+|u_{T_{l}}|_{H^{m}})^{-1}<\epsilon.
$$
Choosing $\epsilon>0$ smaller than the left-hand-side, we obtain a contradiction.

Suppose, by contradiction, there exists $\bar{T}\in (T_{\textnormal{max}},\infty]$ and an $H^m$-solution $\bar{u}$ of \eqref{eq:rough_velocity_projected} on the interval $[0,\bar{T})$. By virtue of uniqueness (i.e., \eqref{ineq:diff_est_H1} in Theorem \ref{thm:diff_est}), we have $u\equiv \bar{u}$ on $[0,T_{\textnormal{max}})$, which implies $$\limsup_{t\uparrow T_{\textnormal{max}}} |u_{t}|_{H^{m_*}}= |\bar{u}_{T_{\textnormal{max}}}|_{H^{m_*}}<\infty \quad \Rightarrow \quad T_{\textnormal{max}}=\infty.$$  This leads to a contradiction, which precludes the existence of an extension of $u$.
\end{proof}

\subsection{BKM blow-up criterion (Proof of Theorem \ref{thm:BKM})}
\begin{proof}[Proof of Theorem \ref{thm:BKM}]
The strategy of the proof is to first construct an approximation sequence of $H^{m+2}$-solutions that converges to $u$ on any interval $[0,T]\subset [0,T_{\textnormal{max}})$. We will then use the solution estimate given in  \eqref{ineq:BKM_apriori} in Theorem \ref{thm:solution_est} and then pass to the limit in both sides of the solution bound. The approximation sequence is constructed separately for $m=m_*$ and $m>m_*$. If $m=m_*$, we only approximate the initial condition, assume $(W^{m_*+4,\infty}_{\sigma})^K$, and use  \eqref{ineq:diff_est_highest} in Theorem \ref{thm:diff_est} to establish convergence in $C([0,T];\mathring{H}^{m_*}_{\sigma})$, which is needed to continue the approximating sequence up to $T_{\textnormal{max}}$. If $m>m_*$, we can avoid the assumption $\xi \in (W^{m+4,\infty})^K$ for all $m$ by smoothing the initial condition and $\xi$ and relying on compactness to obtain convergence in $C([0,T];\mathring{H}^{m_*}_{\sigma})$, which allows us to continue the approximating sequence up to $T_{\textnormal{max}}$. 

\textbf{Case $m=m_*$}. 
Let $\{T_n\}_{n=0}^\infty$ be the sequence of times specified in the proof of Corollary \ref{cor:maximal_solution} converging to $T_{\textnormal{max}}$. In the proof of Theorem \ref{thm:local_wp}, we showed that the sequence $\{u^n\}_{n=1}^\infty$ of $H^{m+2}$-solutions on the interval $[0,T_1]$ corresponding to the initial conditions $\{P_{\le 2^n}u_0\}_{n=1}^\infty \in C_{\sigma}^\infty$ converges to $u$ in $C([0,T_1];\mathring{H}^{m_*}_{\sigma})$. In particular, there exists an $N_2\in \bbN$ be such that for all $n\ge N_2$,
$
|u_{T_1}^n|_{H^{m_*}}< |u_{T_1}|_{H^{m_*}} + 2^{-1}R.
$
Thus, by Remark \ref{rem:local_well_init_time} and \eqref{ineq:diff_est_H1} (i.e., uniqueness), we can extend the solutions $\{u^n\}_{n=N_2}^\infty$ to the interval $[0,T_2]$ such that for all $n\ge N_2$,
\begin{align*}
\sup_{t\in [T_1,T_2]}|u_t^n|_{H^{m_*}}&\le \frac{e^{C_1(1+\varpi_{\bZ}(T_1,T_2))}}{|u_{T_1}^n|_{H^{m_*}}^{-1}-e^{C_1(1+\varpi_{\bZ}(T_1,T_2))} (T_2-T_1)}\\
&\le \frac{e^{C_1(1+\varpi_{\bZ}(T_1,T_2))}}{(2^{-1}R+|u_{T_1}|_{H^{m_*}})^{-1}-(R+|u_{T_1}|_{H^{m_*}})^{-1}}.
\end{align*}
Repeating the argument at the end of the proof of Theorem \ref{thm:local_wp} (i.e., applying \eqref{ineq:diff_est_highest}) on the interval $[0,T_2]$, we get that the sequence $\{u^n\}_{n=N_2}^\infty$ of $H^{m+2}$-solutions on the interval $[0,T_2]$ converges to $u$ in $C([0,T_2];\mathring{H}^{m_*}_{\sigma})$. Proceeding inductively, for all $l\in \bbN$, we find an $N_{l}\in \bbN$ such that the sequence $\{u^n\}_{n=N_l}^\infty$ converges to $u$ in $C([0,T_l];\mathring{H}^{m_*}_{\sigma})$.

\textbf{Case $m>m_*$}.
There exists a sequence $\{(u_0^n,\xi^n)\}_{n=1}^\infty \in \mathring{C}^\infty_{\sigma} \times (C^\infty_{\sigma})^K$ that converges to $(u_0,\xi)$ in  $H^{m}_{\sigma}\times W^{m+1,\infty}$ and such that  for all $n\in \bbN$,
$
|u_{0}^n|_{H^{m}}\le|u_{0}|_{H^{m}}, |\xi^n|_{W^{m+2,\infty}} \le |\xi|_{W^{m+2,\infty}}.
$
Let $\{T_n\}_{n=0}^\infty$ be the sequence of times specified in the proof of Corollary \ref{cor:maximal_solution} converging to $T_{\textnormal{max}}$. By Theorem \ref{thm:local_wp}, there exists a sequence of $H^{m+2}$-solutions $\{u^n\}_{n=1}^\infty$  on the interval $[0,T_1]$ corresponding to the data $\{(u_0^n,\xi^n)\}_{n=1}^\infty$ such that for all $n\in \bbN,$
\begin{align*}
\sup_{t\in [0,T_1]}|u_t^n|_{H^{m_*}}&\le \frac{e^{C_1(1+\varpi_{\bZ}(0,T_1))}}{|u_{0}|_{H^{m_*}}^{-1}-(R+|u_{0}|_{H^{m_*}})^{-1}}\\
\sup_{t\in [0,T_1]}|u_t^n|_{H^{m}}&\le \sqrt{2} \exp\left(C_2 \left(\int_{0}^{T_1}|u_s^n|_{H^{m_*}}\,\rmd s
+\varpi_{\bZ}(0,T_1)\right) \right)|u_{0}|_{H^{m}}.
\end{align*}
Following the proof of Theorem \ref{thm:local_wp} (i.e., applying Theorem \ref{thm:remainder_est}, compactness, and Arzel\'a-Ascoli, and then passing to the limit) and using uniqueness, we find that the full sequence $\{u^n\}_{n=1}^\infty$ converges to $u$ in $C([0,T_1];\mathring{H}^{m-\epsilon}_{\sigma})$ for any $\epsilon>0$ and in the weak-star topology of $L^\infty([0,T_1];\mathring{H}^m_{\sigma})$.
In particular, there exists an $N_2\in \bbN$ be such that for all $n\ge N_2$,
$
|u_{T_1}^n|_{H^{m_*}}< |u_{T_1}|_{H^{m_*}} + 2^{-1}R.
$
By Remark \ref{rem:local_well_init_time} and \eqref{ineq:diff_est_H1} (i.e., uniqueness), we may continue the solutions  to the interval $[0,T_2]$ such that for all $n\ge N_2$
\begin{align*}
\sup_{t\in [T_1,T_2]}|u_t^n|_{H^{m_*}}&\le \frac{e^{C_1(1+\varpi_{\bZ}(T_1,T_2))}}{(2^{-1}R+|u_{T_1}|_{H^{m_*}})^{-1}-(R+|u_{T_1}|_{H^{m_*}})^{-1}}\\
\sup_{t\in [0,T_2]}|u_t^n|_{H^{m}}&\le \sqrt{2} \exp\left(C_2 \left(\int_{0}^{T_2}|u_s^n|_{H^{m_*}}\,\rmd s
+\varpi_{\bZ}(0,T_2)\right) \right)|u_{0}|_{H^{m}}.
\end{align*}
Again, following the proof of Theorem \ref{thm:local_wp} and using uniqueness, we get that the full sequence $\{u^n\}_{n=N_2}^\infty$  converges to $u$ in $C([0,T_2];\mathring{H}^{m-\epsilon}_{\sigma})$ for any $\epsilon>0$ and in the weak-star topology of $L^\infty([0,T_2];\mathring{H}^m_{\sigma})$. Proceeding inductively, for all $l\in \bbN$, we find an $N_{l}\in \bbN$ such that the sequence $\{u^n\}_{n=N_l}^\infty$ converges to $u$ in $C([0,T_l];\mathring{H}^{m-\epsilon}_{\sigma})$ for any $\epsilon>0$ and in the weak-star topology of $L^\infty([0,T_l];\mathring{H}^m_{\sigma})$. 

We will now show \eqref{ineq:BKM_blowup_bound}.
Let $T<T_{\textnormal{max}}=\sup_{l\in \bbN}T_l$ be arbitrarily given and $N(T)\in \bbN$ be such that the sequences constructed above $\{u^n\}_{n=N(T)}^\infty$ of  converges to $u$ in $C([0,T];\mathring{H}^{m_*}_{\sigma})$ and in the weak-star topology of $L^\infty([0,T];\mathring{H}^m_{\sigma})$ if $m>m_*$. It follows that $\{\omega^n=\bd \flat u^n\}_{n=N(T)}^\infty$ converges to $\omega$ in  $C([0,T];L^{\infty})$.\footnote{We actually only need convergence of $\{u^n\}$ in $C([0,T];\mathring{H}^{m_*-\epsilon}_{\sigma})$ for any $\epsilon>0$ to conclude this. However, in order to construct the approximating sequence up to the maximal time $T_{\textnormal{max}}$, we needed the convergence in $C([0,T];\mathring{H}^{m_*}_{\sigma})$.} By \eqref{ineq:BKM_apriori} in Theorem  \eqref{ineq:BKM_apriori}, there exists constants   $C_1=C_1(d,m)$ and $C_2=C_2(p,d,m, |\xi|_{W^{m+2,\infty}})$ such that for all $n\ge N(T)$,
$$
\sup_{t\le T}|u_t^n|_{H^{m}} \le C_1 (1+|u_{0}|_{H^{m}})\exp\left(C_2(1+\varpi_{\bZ}(0,T))\exp \left(C_2 \int_{0}^T|\omega_s^n|_{L^\infty}\,\rmd s\right)\right).
$$
Using the lower semi-continuity of weak-star convergence if $m>m_*$, we pass to the limit as $n\rightarrow \infty$ on both sides of the inequality to obtain \eqref{ineq:BKM_blowup_bound}.  

If $T_{\textnormal{max}}<\infty$, then $\limsup_{t\uparrow T_{\textnormal{max}}} |u_t|_{H^m}=\infty$, which yields $\limsup_{t\uparrow T_{\textnormal{max}}}\int_0^t|\omega_s|_{L^{\infty}}=\infty$ by \eqref{ineq:BKM_blowup_bound}. Conversely, if $\limsup_{t\uparrow T_{\textnormal{max}}}\int_0^t|\omega_s|_{L^{\infty}}\rmd s=\infty$, then
$$\infty =\limsup_{t\uparrow T_{\textnormal{max}}}\int_0^t|\omega_s|_{L^{\infty}}\rmd s \le \limsup_{t\uparrow T_{\textnormal{max}}}\int_0^t |u_s|_{H^{m}}\rmd s,$$ 
which implies $\limsup_{t\uparrow T_{\textnormal{max}}}|u_t|_{H^m}=\infty$.
\end{proof}

\subsection{Global well-posedness in two-dimensions (Proof of Theorem \ref{thm:global2d})}
\begin{proof}[Proof of Corollary \ref{thm:global2d}]
Consider the sequence $\{u^n\}_{n=1}^\infty$ of $H^{\infty}$-solutions on $[0,T^n_{\textnormal{max}})$ corresponding to the data $\{(u_0^n,\mathbf{Z}^n,\xi^n)\}_{n=1}^\infty \in \mathring{C}^\infty_{\sigma}\times C_g^{1-\textnormal{var}}\times (C^\infty_{\sigma})^K$ from the proof of Theorem \ref{thm:local_wp}. For given $n\in \bbN$, let $\tilde{\omega}^n= \operatorname{curl}u^n$, so that (see Remark \ref{rem:vorticity_scalar_vec}),
\begin{equation*}
\begin{cases}
\rmd \tilde{\omega}^n + u^n\cdot \nabla \tilde{\omega}^n\rmd t+ \xi_k^n\cdot \nabla \tilde{\omega}^n\rmd z^{n,k}_t=0\quad \textnormal{on}\;\; [0,T^n_{\textnormal{max}})\times \bbT^d,\\
\tilde{\omega}^n=\tilde{\omega}^n_{0} \quad \textnormal{on}\;\; \{0\}\times \bbT^d.
\end{cases}
\end{equation*}
It follows that for all $n\in \bbN$, $t\in [0,\infty)$, and $p\in [2,\infty],$
\begin{equation}\label{eq:Lp_conserved_approx}
|\omega_t^n|_{L^p}= |\omega^n_0|_{L^p} \le |\omega_0|_{L^p}.
\end{equation}
Moreover, $T^n_{\textnormal{max}}=\infty$ since (see, e.g., \cite{majda2002vorticity}[Chapter 3] or \cite{bahouri2011fourier}[Chapter 7])  for all $n\in \bbN,$
$$
|u_t^n|_{H^{m}} \le C_1 (1+|u_{0}^n|_{H^{m}})\exp\left(C_2(1+\int_0^t|\dot{z}^{n}_s|\rmd s)\exp \left(C_2t |\omega_0^n|_{L^\infty}\right)\right).
$$
Owing to \eqref{ineq:BKM_apriori} in Theorem \ref{thm:solution_est}, there exists constants   $C_1=C_1(m)$ and $C_2=C_2(p,m, |\xi|_{W^{m+2,\infty}})$ such that for all $n\in \bbN$ and  $t\ge 0$, 
\begin{equation}\label{ineq:double_exp_2d_approx}
|u_t^n|_{H^{m}} \le C_1 (1+|u_{0}|_{H^{m}})\exp\left(C_2(1+\varpi_{\bZ}(0,t))\exp \left(C_2t |\omega_0|_{L^\infty}\right)\right).
\end{equation}
Proceeding as in the proof of Theorem \ref{thm:local_wp} and using uniqueness, we find that $\{u^n\}_{n=1}^\infty$ converges to $u$ in $C([0,\infty);\mathring{H}^{m-\epsilon}_{\sigma})$ for any $\epsilon>0$ and in the weak-star topology of $L^\infty([0,\infty);\mathring{H}^m_{\sigma})$. We may then pass to the limit \eqref{ineq:double_exp_2d_approx} using lower semicontinuity of weak-star convergence to obtain \eqref{ineq:double_exp_2d}. Since there always exists an $\epsilon>0$ such that $m-\epsilon>\frac{d}{2}+1,$ $\{\tilde{\omega}^n\}_{n=1}^\infty$ converges to $\omega$ in $C([0,T];L^{\infty})$, and thus passing to the limit in \eqref{eq:Lp_conserved_approx} yields \eqref{eq:vort_Lp_conserved}.
\end{proof}

\subsection{Continuous dependence on data (Proof of Corollary \ref{cor:stability})}

\begin{proof}[Proof of  Corollary \ref{cor:stability}]
Let $R>1$ be such that for all $n\in \bbN$,
$$
|u_{0}^n|_{H^{m}}\le R,\quad|\xi^n|_{W^{m+2,\infty}} +|\xi|_{W^{m+2,\infty}} \le R,\quad
\varpi_{\bZ^n}(s,t)\le R+ \varpi_{\bZ}(s,t), \; \forall (s,t)\in \Delta_{[0,\infty)}.
$$

We will establish the convergence of $\{u^n\}$ for the cases $d=2$ and $m>m_*$ separately.

\textbf{Case $d=2$}. Owing to \eqref{ineq:double_exp_2d}, there exists a constant  $\bar{C}=\bar{C}(p,m,R)$ such that for all  and $n\in \bbN$ and  $t\ge 0$,
$$
|u_t^n|_{H^{m}} \le C_1 (1+R)\exp\left(\bar{C}(1+\varpi_{\bZ}(0,t))\exp \left(\bar{C}|\tilde{\omega}_0|_{L^\infty}t\right)\right).
$$
Proceeding as in the proof of Theorem \ref{thm:local_wp}, we find that $\{u^n\}_{n=N_1}^\infty$ converges to $u$ in $C([0,\infty);\mathring{H}^{m-\epsilon}_{\sigma})$ for any $\epsilon>0$ and in the weak-star topology of $L^\infty([0,\infty);\mathring{H}^m_{\sigma})$.

\textbf{Case $m>m_*$}. Let $C_1(|\xi|_{W^{m_*+2,\infty}})=C_1(p,d,m,|\xi|_{W^{m_*+2,\infty}})$ denote the constant appearing in \eqref{ineq:time_local_ex}. Let $\bar{C}=RC_1(p,d,m,R)$ and note that for all $(s,t)\in \Delta_{[0,\infty)}$,
$$
C_1(|\xi_n|_{W^{m_*+2,\infty}})(1+\varpi_{\bZ^n}(s,t))\vee C_1(|\xi|_{W^{m_*+2,\infty}})(1+\varpi_{\bZ}(s,t)) \le \bar{C}(1+\varpi_{\bZ}(s,t)). 
$$
Let $\{T_n\}_{n=0}^\infty$ be the sequence of times specified in the proof of Corollary \ref{cor:maximal_solution} with  $R$ and $\bar{C}$ as just specified. We then proceed as in the proof of Theorem \ref{thm:BKM} to show that for all $T<T_{\textnormal{max}}$, there exists an $N(T)\in \bbN$  such that the sequence $\{u^n\}_{n=N(T)}^\infty$ of $H^m$-solutions converges to $u$ in $C([0,T];\mathring{H}^{m-\epsilon}_{\sigma})$ for any $\epsilon>0$ and in the weak-star topology of $L^\infty([0,T];\mathring{H}^m_{\sigma})$. 

We will now turn our attention to showing the convergence of the pressure and harmonic constant. Let $T<T_{\textnormal{max}}$ and $N(T)=1$ if $d=2$ and $N(T)$ as specified if $m>m_*$.
By Proposition \ref{prop:pressure_and_constant_recovery}, for all $n\ge N(T)$, there exists $q^n\in C^{p-\textrm{var}}([0,T];\nabla \mathring{H}^{m-2})$ and $h^n\in C^{p-\textrm{var}}([0,T];\bbR^d)$ such that
$$
u_t^n-u_0^n+ \int_0^tu_s^n\cdot \nabla u_s^n\,\rmd s- \int_0^t\pounds_{\xi_k^n}^*u_s^n\, \rmd \bZ^{n,k}_{s} =-\nabla  q^n_{t} - h^n_{t},
$$
where, upon adopting the notation in the proof of Proposition \ref{prop:pressure_and_constant_recovery}, for all $(s,t)\in \Delta_T$, we have
$$
\nabla \delta q^n_{st} := \int_s^t Q (u_r^n \cdot \nabla u_r^n)\rmd r + Q\Xi_{st}^{n} + u^{n,Q,\natural},\quad
\delta h_{st}^n = H\Xi_{st}^{n} + u^{n,H,\natural},
$$
and
$$
\Xi^n_{st}=  \pounds_{\xi_k^n}^*u_s^nZ^{n,k}_{st} +\pounds_{\xi_k^n}^*P\pounds_{\xi_l^n}^*u_s^n\mathbb{Z}^{n,lk}_{st}.
$$
Thus,  for all $(s,t)\in \Delta_{T}$
\begin{align*}
|\nabla \delta q^n_{st}|_{H^{m-3}}&\le (t-s)\sup_{s\le r\le t} |\nabla u_r^n|_{L^\infty} | u_r^n|_{H^{m-3}} \,\rmd r + |\Xi_{st}^{n}|_{H^{m-3}} +  |u^{n,Q,\natural}|_{H^{m-3}}\\
|\delta h_{st}^n| &\le |\Xi_{st}^{n}|_{H^{m-3}} +  |u^{n,Q,\natural}|_{H^{m-3}}.
\end{align*}
There exists a constant $C=C(p,d,m, R)$ such that
$$
|\Xi_{st}^n|_{H^{m-3}}\le C \sup_{ r\le T}|u_r^n|_{H^{m-1}}\varpi_{\bZ^n}(s,t)^{\frac{1}{p}}.
$$
Theorem \ref{thm:remainder_est} implies that there exists a constant $C=C(p,d,m, R)$ such that for all $(s,t)\in \Delta_T$ with $C\varpi_{\bZ^n}(s,t)+C\int_s^t|u_r^n|_{H^{m}}^2\rmd r\le  1 $, it holds that
\begin{align*}
|\delta \Xi_{s\theta t}^n|_{H^{m-3}}&\le C\left(|u^{n,P,\sharp}|_{\frac{p}{2}-\textnormal{var},[s,t],H^{m-2}}^{\frac{p}{2}} \varpi_{\bZ^n}(s,t)^{\frac{1}{p}} +|u^n|_{p-\textnormal{var},[s,t],H^{m-1}}^p\varpi_{\bZ^n}(s,t)^{\frac{2}{p}} \right)\\
|u^{n,P,\sharp}|_{\frac{p}{2}-\textnormal{var};[s,t];H^{m-2}}^{\frac{p}{2}}&\le  C\left( \int_s^t |u_r^n|_{H^{m}}^2\,\rmd r+ \sup_{s\le r\le t}|u_r^n|_{H^{m}}\varpi_{\bZ^n}(s,t)^{\frac{2}{p}}\right)
\end{align*}
The bound \eqref{ineq:sewing_lemma} in the sewing lemma then implies that
$$
u^{n,Q,\natural} := \delta I^{n,Q} - Q\Xi^{n} \quad \textnormal{and}  \quad u^{n,H,\natural}: = \delta h^n - H\Xi^{n},
$$
are  bounded independent of $n$ in $C_{2,\textnormal{loc}}^{\frac{p}{3}-\textrm{var}}([0,T];\nabla \mathring{H}^{m-2})$ and $C_{2,\textnormal{loc}}^{\frac{p}{3}-\textrm{var}}([0,T];\bbR^d),$ respectively. Therefore, 
$\{( \nabla q^n,h^n)\}_{n=N(T)}^\infty$ is bounded in $C^{p-\textrm{var}}([0,T];\nabla \mathring{H}^{m-2})\times C^{p-\textrm{var}}([0,T];\bbR^d)$. 
Following the proof of Theorem \ref{thm:local_wp} and using uniqueness of $u$ from which uniqueness of $(q,h)$ follows, we deduce that $\{ (\nabla q^n,h^n)\}_{n=N(T)}^\infty$ converges to $(q,h)$ in $C([0,T]; \nabla \mathring{H}^{m-2-\epsilon})\times C([0,T];\bbR^d)$ for any $\epsilon>0$.
\end{proof}

\begin{appendices}

\section{Unbounded rough drivers}\label{sec:URD}

In this section, we present some elements of the theory of unbounded rough drivers \cite{BaGu15} and the associated remainder estimates. We use Theorem \ref{thm:URDRemEst} in Section \ref{sec:apriori} to derive a priori estimates of the remainder and solution (i.e., Theorems \ref{thm:remainder_est}, \ref{thm:solution_est}, and \ref{thm:diff_est}).

\begin{definition}[Scale]\label{def:scale}
We say a sequence  of Banach spaces $(E_{n}, | \cdot |_{n})_{n=0}^3=(E_n)$ is a scale, if $E_{n+1}$ is continuously embedded into $E_{n}$ for all $n\in\{0,1,2,3 \}$. Denote by $E_{-n}$ the strong topological dual of $E_{n}$.
\end{definition}
\begin{definition}[Unbounded rough driver] \label{def:urd}
For a given interval $[0,T]$, let $A^i: \Delta_T\rightarrow \clL(E_{-n};E_{-(n+i)})$ for $n\in \{0,1,2\}$ and $i\in \{1,2\}$. 
For a given $p\in [2,3)$, a pair of $2$-index maps  $\mathbf{A} = (A^1,A^2)$ is called an unbounded $p$-rough driver on the interval $[0,T]$ with respect to the scale $(E_{n})$ if there exists a regular control $\varpi_{\bA}$ on $[0,T]$ such that for every $(s,t)\in \Delta_T$,
\begin{equation}
\begin{aligned}\label{ineq:urd_est}
| A^1_{st}|_{\clL(E_{-n};E_{-(n+1)})}^p &\leq\varpi_{\bA}(s,t) \ \  \text{for}\ \ n \in \{0,2\},\\ 
|A^2_{st}|_{\clL(E_{-n};E_{-(n+2)})}^{\frac{p}{2}} &\leq\varpi_{\bA}(s,t) \ \ \text{for}\ \ n \in \{0,1\},
\end{aligned}
\end{equation}
and, in addition, Chen's relations hold:
\begin{equation}\label{eq:chen_rel_urd}
\delta A^1_{s\theta t}=0 \quad  \textnormal{ and } \quad \delta A^2_{s\theta t}= -A^1_{\theta t}A^1_{s\theta},\qquad \forall (s,\theta,t)\in\Delta^{2}_{T}.
\end{equation}
\end{definition}

\begin{definition}[Smoothing]\label{def:smooth_op}
We say a family of  operators $(J^{\eta})_{\eta \in (0,1)}$ is a smoothing on a given scale $(E_n)$ if the following conditions are satisfied: 
\begin{align*}
|J^{\eta}-I|_{\clL(E_m;E_n)}&\lesssim \eta^{m-n} \quad \textnormal{ for } (n,m)\in \{(0,1),(0,2),(1,2)\}\\
|J^{\eta}|_{\clL(E_n;E_m)}&\lesssim \eta^{-(m-n)}\quad \textnormal{ for } (n,m)\in \{(1,1),(1,2),(2,2),(1,3),(2,3)\}.
\end{align*}
\end{definition}

\begin{definition}[Solution of unbounded rough driver equation]\label{def:urd_eqn}
Let $\mathbf{A}=(A^1,A^2)$ be a continuous unbounded $p$-rough driver on $[0,T]$ with respect to a scale $(E_n)$. Let $\mu \in C^{1-\textnormal{var}}([0,T]; E_{-1})$.  Assume that $(E_n)$ admits a smoothing. A  bounded path  $f:[0,T]\rightarrow E_{-0}$ is called a solution  of
\begin{equation}\label{eq:urd}
\rmd f+\mu(\rmd t)+\mathbf{A}(\rmd t)f=0
\end{equation}
on the interval $[0,T]$, provided $f^{\natural}: \Delta_T\rightarrow E_{-3}$  defined   for every $(s,t)\in \Delta_T$ and $\phi\in E_3$ by 
$$
\langle f^{\natural}_{st},\phi\rangle =\langle \delta f_{st}, \phi\rangle +\langle \delta \mu_{st}, \phi\rangle +\langle f_s, (A^{1,*}_{st}+A^{2,*}_{st})\phi\rangle
$$
satisfies $f^{\natural}\in C^{\frac{p}{3}-\textnormal{var}}_{2, \textnormal{loc}}([0,T]; E_{-3})$.
\end{definition}

Define the map $f^{\sharp}:\Delta_T\rightarrow E_{-3}$  for every $(s,t)\in \Delta_T$ and $\phi\in E_3$ by
$$
\langle f^{\sharp}_{st}, \phi\rangle =\langle \delta f_{st} , \phi\rangle +\langle f_s, A^{1,*}_{st}\phi \rangle  = -\langle \delta \mu_{st}, \phi\rangle - \langle f_s, A^{2,*}_{st}\phi \rangle +\langle f^{\natural}_{st},\phi\rangle.
$$
The first expression for $f^\sharp_{st}$ consists of terms that are less regular in time  and more regular in space than the second expression for  $f^\sharp_{st}$. The following theorem is proved using interpolation, and hence properties of the smoothing operators, and the sewing lemma (i.e., Lemma \ref{lem:sewing}). Its proof can be found, for example, in  \cite{hocquet2018energy}[Proposition 3.1].
\begin{theorem}[Unbounded rough driver estimates]\label{thm:URDRemEst}
Let $u$ be a solution of \eqref{eq:urd}  and  assume  that there exists a regular control $\varpi_{\mu}$  on the interval $[0,T]$ such that for all $(s,t)\in \Delta_T$,
\begin{equation}\label{ineq:urd_drift_est}
|\delta\mu_{st}|_{E_{-1}}\le \varpi_{\mu}(s,t). 
\end{equation}
Then there exists positive constants $C=C(p)$ and $L=L(p)$ such that for all $(s,t)\in \Delta_T$ with  $\varpi_{\bA}(s,t)\le L$ it holds that 
$$
|f^{\natural}|_{\frac{p}{3}-\textnormal{var};[s,t];E_{-3}}^{\frac{p}{3}}\le C\left(\sup_{s\le r\le t}|f_r|_{E_{-0}} \varpi_{\bA}(s,t)^{\frac{3}{p}} +  \varpi_{\mu}(s,t) \varpi_{\bA}(s,t)^{\frac{1}{p}}\right)\,.
$$
Furthermore, for all $(s,t)\in \Delta_T$ with  $\varpi_{\bA}(s,t)+\varpi_{\mu}(s,t)\le L$  it holds that
\begin{align*}
|f^{\sharp}|_{\frac{p}{2}-\textnormal{var};[s,t];E_{-2}}^{\frac{p}{2}}&\le C \left(\varpi_{\mu}(s,t)+ \sup_{s\le r\le t}|f_r|_{E_{-0}}\varpi_{\bA}(s,t)^{\frac{2}{p}})\right)\,,\\
|f|_{p-\textnormal{var},[s,t];E_{-1}}^{p}&\le C \left(\varpi_{\mu}(s,t)+ \sup_{s\le r\le t}|f_r|_{E_{-0}}( \varpi_{\mu}(s,t) ^{\frac{1}{p}}+\varpi_{\bA}(s,t)^{\frac{1}{p}})\right).
\end{align*}
\end{theorem}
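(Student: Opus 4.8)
The plan is to follow the now-standard a priori analysis for unbounded rough drivers from \cite{BaGu15, DeGuHoTi16, hocquet2018energy} (this is essentially \cite{hocquet2018energy}[Proposition 3.1]). There are three stages: extract the purely algebraic Chen-type identities for $f^{\sharp}$ and $f^{\natural}$; recover $f^{\natural}$ from a nearly additive two-index map via the sewing lemma (Lemma \ref{lem:sewing}), after bounding its defect by interpolating with the smoothing operators $(J^{\eta})$ of Definition \ref{def:smooth_op} between the crude bound $\sup_r|f_r|_{E_{-0}}$ and the variation-carrying bounds in the weak space $E_{-3}$; and close the estimates by a short-interval bootstrap in which the $f^{\natural}$-contribution is absorbed. \textbf{Algebraic step.} Writing $\langle f^{\sharp}_{st},\phi\rangle=\langle\delta f_{st},\phi\rangle+\langle f_s,A^{1,*}_{st}\phi\rangle$, I would combine the defining relations for $f^{\sharp}$ and $f^{\natural}$ with the dualized Chen relations \eqref{eq:chen_rel_urd} for $\bA$, namely $\delta A^{1,*}_{s\theta t}=0$ and $\delta A^{2,*}_{s\theta t}=-A^{1,*}_{s\theta}A^{1,*}_{\theta t}$, together with $\delta(\delta f)=\delta(\delta\mu)=0$, to obtain the identities
$$
\delta f^{\sharp}_{s\theta t}=-A^{1}_{\theta t}\,\delta f_{s\theta},\qquad \delta f^{\natural}_{s\theta t}=-A^{1}_{\theta t}\,f^{\sharp}_{s\theta}-A^{2}_{\theta t}\,\delta f_{s\theta},\qquad (s,\theta,t)\in\Delta^{2}_{T}.
$$
In addition, the definitions give the pointwise decompositions $\delta f_{st}=f^{\natural}_{st}-\delta\mu_{st}-(A^{1}_{st}+A^{2}_{st})f_s$ and $f^{\sharp}_{st}=f^{\natural}_{st}-\delta\mu_{st}-A^{2}_{st}f_s$, which transfer quantitative control from $f^{\natural}$ to $\delta f$ and $f^{\sharp}$ and back.

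\textbf{Sewing and interpolation.} Setting $h_{st}:=\delta\mu_{st}+(A^{1}_{st}+A^{2}_{st})f_s$ one has $f^{\natural}_{st}=\delta f_{st}+h_{st}$, hence $\delta h_{s\theta t}=\delta f^{\natural}_{s\theta t}$ is given by the second identity above. To apply Lemma \ref{lem:sewing} with $\zeta=p/3<1$ I need to dominate $|\delta h_{s\theta t}|_{E_{-3}}$ by a control to the power $3/p$; by \eqref{ineq:urd_est} this reduces to bounding $|f^{\sharp}_{s\theta}|_{E_{-2}}$ (paired with $\varpi_{\bA}(\theta,t)^{1/p}$) and $|\delta f_{s\theta}|_{E_{-1}}$ (paired with $\varpi_{\bA}(\theta,t)^{2/p}$). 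Both intermediate-space quantities are produced by the usual interpolation: for $g$ write $g=J^{\eta}g+(I-J^{\eta})g$, estimate the first piece in the stronger space and the second in the weaker space via the dualized bounds of Definition \ref{def:smooth_op}, and optimize over $\eta\in(0,1)$. For $\delta f$ one interpolates between $|\delta f|_{E_{-0}}\le 2\sup_r|f_r|_{E_{-0}}$ and $|\delta f_{st}|_{E_{-3}}\le|f^{\natural}_{st}|_{E_{-3}}+\varpi_{\mu}(s,t)+C\sup_r|f_r|_{E_{-0}}\varpi_{\bA}(s,t)^{1/p}$ (from the decomposition above and \eqref{ineq:urd_est}); for $f^{\sharp}$ one interpolates between $|f^{\sharp}_{st}|_{E_{-1}}\le C\sup_r|f_r|_{E_{-0}}$ (from $f^{\sharp}=\delta f+A^{1}f$, $A^{1}\colon E_{-0}\to E_{-1}$, and $E_{-0}\hookrightarrow E_{-1}$) and its $E_{-3}$-bound, using the first identity $\delta f^{\sharp}_{s\theta t}=-A^{1}_{\theta t}\delta f_{s\theta}$ to apply the sewing lemma to $f^{\sharp}$ as well. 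The sewing lemma then yields $|f^{\natural}_{st}|_{E_{-3}}\le C_{\zeta}\,\varpi(s,t)^{3/p}$ for the control $\varpi$ assembled from $\varpi_{\bA}$, $\varpi_{\mu}$ and $\sup_r|f_r|_{E_{-0}}$, and the analogous $p/2$- and $p$-variation statements for $f^{\sharp}$ and $f$.

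\textbf{Closing the bootstrap.} These estimates are self-referential: the $E_{-3}$-bound on $\delta f$ contains $|f^{\natural}|$ itself, so after the sewing step the right-hand side carries a term of the form $C\,\varpi_{\bA}(s,t)^{\varepsilon}\,|f^{\natural}|_{\frac{p}{3}-\textnormal{var};[s,t];E_{-3}}^{p/3}$ with $\varepsilon>0$ (the strictly positive power is exactly what the $J^{\eta}$-interpolation produces). Choosing the universal threshold $L=L(p)$ small enough that $C\,\varpi_{\bA}(s,t)^{\varepsilon}\le 1/2$ whenever $\varpi_{\bA}(s,t)\le L$ lets one absorb this term into the left-hand side, giving the first asserted inequality. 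Re-inserting the resulting bound for $f^{\natural}$ into the interpolation estimates for $\delta f$ in $E_{-1}$ and $f^{\sharp}$ in $E_{-2}$ yields the remaining two inequalities; the auxiliary smallness $\varpi_{\bA}(s,t)+\varpi_{\mu}(s,t)\le L$ used there is what makes the $\mu$-terms absorb and produces the $\varpi_{\mu}(s,t)^{1/p}$ factor in the estimate for $|f|_{p-\textnormal{var}}$. Throughout, products of powers of $\varpi_{\bA}$ and $\varpi_{\mu}$ whose exponents sum to at least one are recombined into a single control by superadditivity.

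\textbf{Main obstacle.} I expect the delicate point to be the bootstrap bookkeeping: tracking all powers of $\varpi_{\bA}$ and $\varpi_{\mu}$ through the $J^{\eta}$-interpolation so that the $f^{\natural}$-contribution to $\delta h$ genuinely carries a strictly positive power of $\varpi_{\bA}$ (hence is absorbable on a short interval), and correctly dualizing the smoothing estimates of Definition \ref{def:smooth_op} so that they act on the negative-index spaces $E_{-n}$. The sewing step and the superadditive control calculus are otherwise routine.
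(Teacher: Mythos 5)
Your proposal is correct and follows exactly the route the paper indicates: the paper itself does not write out the proof but cites \cite{hocquet2018energy}[Proposition 3.1] and describes the argument as interpolation with the smoothing operators plus the sewing lemma, which is precisely your three-stage outline, and your algebraic Chen-type identities $\delta f^{\sharp}_{s\theta t}=-A^{1}_{\theta t}\delta f_{s\theta}$ and $\delta f^{\natural}_{s\theta t}=-A^{1}_{\theta t}f^{\sharp}_{s\theta}-A^{2}_{\theta t}\delta f_{s\theta}$ check out.
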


\section{Rough Gronwall's lemma}\label{sec:rough_gron}
In this section, we state a rough version of Gronwall's lemma. The proof can be found, for example, in  \cite{DeGuHoTi16}.  

\begin{lemma}[Rough Gronwall's lemma]\label{lem:RoughGronwall}
Let  $L$ and $p$ denote positive constants. Let $\kappa \in L^1(I)$ and  $\varpi$ be a regular control on the  interval $[0,T]$.  Let $\phi : \Delta_T \rightarrow \bbR_+$ be such that $\phi(s,t) \leq \phi(0,T)$ for all $(s,t)\in \Delta_T$. Assume that $G: [0,T] \rightarrow \bbR_+$ is such that for every $(s,t)\in \Delta_T$ with $\varpi(s,t)\le L$,
$$
\delta G_{st} \leq \phi(s,t) + \int_s^t\kappa_r G_r \,\rmd r+\varpi(s,t)^{\frac{1}{p}}\sup_{ r\le t}G_t.
$$
Then there exists a positive constant $\beta=\beta(L,p)$ such that 
$$
\sup_{ 0\le  t \leq T} G_{t} \leq 2 \exp \left( \beta \left(\int_{0}^T\kappa_r \,\rmd r+ \varpi(0,T)\right)\right) \left( G_0 +  \phi(0,T)\right).
$$
\end{lemma}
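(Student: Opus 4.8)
The plan is to turn the rough differential inequality into a finite chain of classical Gr\"onwall estimates on subintervals where the control $\varpi$ is uniformly small; I read the last term in the hypothesis as the running supremum $\sup_{s\le r\le t}G_r$. Fix $L':=L\wedge 2^{-p}$ and construct a greedy partition of $[0,T]$: set $\tau_0=0$ and, while $\tau_i<T$, put $\tau_{i+1}:=\sup\{t\in[\tau_i,T]:\varpi(\tau_i,t)\le L'\}$. Since $\varpi$ is a continuous superadditive control with $\varpi(s,s)=0$, this procedure terminates after finitely many steps and produces points $0=\tau_0<\tau_1<\cdots<\tau_N=T$ with $\varpi(\tau_i,\tau_{i+1})\le L'$ for every $i$ and, by continuity, $\varpi(\tau_i,\tau_{i+1})=L'$ whenever $\tau_{i+1}<T$; superadditivity then gives $(N-1)L'\le\sum_{i=0}^{N-1}\varpi(\tau_i,\tau_{i+1})\le\varpi(0,T)$, i.e.\ $N-1\le\varpi(0,T)/L'$.

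Next I would fix a block $[\tau_i,\tau_{i+1}]$ and set $H_i(t):=\sup_{\tau_i\le r\le t}G_r$, which is nondecreasing, hence Borel measurable. Applying the hypothesis with $s=\tau_i$ and arbitrary $r\in[\tau_i,t]$, bounding $\phi(\tau_i,r)\le\phi(0,T)$, replacing $G$ by $H_i$ inside the integral, using the monotonicity of $\varpi(\tau_i,\cdot)$ and of $H_i$, and taking the supremum over $r\in[\tau_i,t]$, one arrives at
$$H_i(t)\le G_{\tau_i}+\phi(0,T)+\int_{\tau_i}^t\kappa_rH_i(r)\,\rmd r+\varpi(\tau_i,t)^{1/p}H_i(t).$$
Because $\varpi(\tau_i,t)^{1/p}\le(L')^{1/p}\le\tfrac12$, the last term is absorbed on the left, and the classical integral form of Gr\"onwall's inequality applied to the monotone function $H_i$ yields
$$H_i(\tau_{i+1})\le 2\bigl(G_{\tau_i}+\phi(0,T)\bigr)\exp\Bigl(2\int_{\tau_i}^{\tau_{i+1}}\kappa_r\,\rmd r\Bigr).$$

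Then I would iterate over the blocks. Writing $b_i:=G_{\tau_i}+\phi(0,T)$ and $K_i:=\exp\bigl(2\int_{\tau_i}^{\tau_{i+1}}\kappa_r\,\rmd r\bigr)\ge1$, the block bound gives $G_{\tau_{i+1}}\le 2K_ib_i$, hence $b_{i+1}\le 2K_ib_i+\phi(0,T)\le(2K_i+1)b_i\le3K_ib_i$, so $b_i\le3^i\bigl(\prod_{j<i}K_j\bigr)b_0$. Therefore, for any $t\in[\tau_i,\tau_{i+1}]$ with $i\le N-1$,
$$G_t\le H_i(\tau_{i+1})\le 2K_ib_i\le 2\cdot3^{N-1}\Bigl(\prod_{j=0}^{N-1}K_j\Bigr)b_0=2\cdot3^{N-1}\exp\Bigl(2\int_0^T\kappa_r\,\rmd r\Bigr)\bigl(G_0+\phi(0,T)\bigr),$$
using $b_0=G_0+\phi(0,T)$ and $\prod_{j=0}^{N-1}K_j=\exp(2\int_0^T\kappa_r\,\rmd r)$. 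Finally $3^{N-1}\le\exp\bigl((\ln3/L')\varpi(0,T)\bigr)$, so taking the supremum over $t\in[0,T]$ and setting $\beta:=\max\{2,\ln3/L'\}$, which depends only on $L$ and $p$, gives the claimed estimate.

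The routine pieces are the greedy partition and the scalar Gr\"onwall step; the delicate point is sequencing the estimates so that the supremum term is genuinely absorbed — this is exactly what forces the choice $L'\le2^{-p}$ — and arranging the bookkeeping so that the product of the $N$ blockwise constants collapses into a single factor $\exp(\beta\varpi(0,T))$ while the overall prefactor stays equal to $2$. A minor point is that $\kappa$ is only ever integrated against the monotone functions $H_i$, so no measurability assumption on $G$ beyond boundedness (which holds in all our applications) is needed.
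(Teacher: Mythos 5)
The paper supplies no proof of this lemma---it simply points to \cite{DeGuHoTi16}---so the appropriate comparison is with that reference and with the cousin version in \cite{hocquet2018energy}. Your proof is correct and follows the same strategy used there: a greedy partition of $[0,T]$ into blocks on which $\varpi$ is small enough that the supremum term can be absorbed, followed by a classical Gr\"onwall argument on each block and a telescoping iteration across blocks. The bookkeeping (the factor $3^{N-1}$ controlled via $N-1\le\varpi(0,T)/L'$, the product $\prod K_j = \exp(2\int_0^T\kappa)$, and the collapse into $\beta=\max\{2,\ln3/L'\}$) is carried out correctly and yields exactly the stated conclusion.

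One point to flag: the lemma's displayed hypothesis contains the obviously garbled token $\sup_{r\le t}G_t$. You resolve it as $\sup_{s\le r\le t}G_r$, which matches how the lemma is actually invoked in the paper (cf.\ \eqref{ineq:solution_est_pregron}), but the reference \cite{DeGuHoTi16} states the hypothesis with the global running supremum $\sup_{0\le r\le t}G_r$, which is a \emph{weaker} hypothesis and hence a \emph{stronger} lemma. Your argument does not directly prove that stronger version, because the absorption step relies on $H_i(t)=\sup_{\tau_i\le r\le t}G_r$ being the same quantity that appears in the hypothesis; with the global supremum, the quantity appearing would be $\max\bigl(\sup_{[0,\tau_i]}G,\,H_i(t)\bigr)$, and the first piece cannot be absorbed into $H_i(t)$. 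The fix is immediate---work with $\bar H(t):=\sup_{0\le r\le t}G_r$ instead of $H_i$, note $\bar H(t)=\max(\bar H(\tau_i),\sup_{\tau_i\le r\le t}G_r)$, and run the identical absorption and iteration with $b_i:=\bar H(\tau_i)+\phi(0,T)$---but you should state that modification explicitly if the global-supremum reading is intended, since as written the proof only establishes the weaker (local-supremum) variant. Given the application in Section~\ref{sec:apriori} only ever verifies the local-supremum hypothesis, your version is in any case sufficient for the paper's purposes.
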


\end{appendices}
\bibliographystyle{alpha}
\bibliography{bibliography}

\end{document}